\newcommand{\mathi}{\mathrm{i}}
\newtheorem{Conjecture}{Conjecture}
\newtheorem{corollary}{Corollary}
\newtheorem{lemma}{Lemma}
\newtheorem{theorem}{Theorem}
\newtheorem{proposition}{Proposition}
\newtheorem*{Main}{Main Theorem}
\begin{document}
%\title{Zeros of $\zeta(s)$, of $\zeta'(s)$, and of Siegel}
\title{Gaps between zeros of $\zeta(s)$ 
and the distribution of zeros of $\zeta'(s)$}
\author{Maksym Radziwi\l\l}
\address{Department of Mathematics \\ Stanford University\\
450 Serra Mall, Bldg. 380\\
Stanford, CA 94305-2125}
\email{maksym@stanford.edu}
\thanks{The author is partially supported by a NSERC PGS-D award}
\subjclass[2010]{Primary: 11M06, Secondary: 11M26}
\begin{abstract}
%Let $\delta(\varepsilon)$ be the proportion of zeros $\rho'$
%of $\zeta'(s)$ such that
We settle a conjecture of Farmer and Ki in a stronger form.
Roughly speaking we show that there is a positive proportion of small
gaps between consecutive zeros of the zeta-function $\zeta(s)$
if and only if there is a positive proportion of zeros of $\zeta'(s)$
lying very closely to the half-line.
Our work has applications to the Siegel zero problem. We provide
a criterion for the non-existence of the Siegel zero, solely in
terms of the distribution of the zeros of $\zeta'(s)$. Finally
on the Riemann Hypothesis and the Pair Correlation Conjecture we
obtain near optimal bounds for the number of zeros of $\zeta'(s)$
 lying very closely to the half-line.
Such bounds are relevant to a deeper understanding of Levinson's method, 
allowing us to place
one-third of the zeros of the Riemann zeta-function on the half-line.
\end{abstract}
\maketitle
\section{Introduction.}

The inter-relation
between the \textit{horizontal} distribution of zeros 
of $\zeta(s)$ (denoted $\rho = \beta + i\gamma$) 
and the \textit{horizontal} distribution of the zeros of
$\zeta'(s)$ (denoted $\rho' = \beta' + i\gamma'$) 
is the basis of Levinson's method
\cite{Levinson} allowing us to place 
one third of the zeros of $\zeta(s)$ on the
critical line. 

Recently it has been understood that the \textit{horizontal}
distribution of the zeros of $\zeta'(s)$ is also 
related to the \textit{vertical}
distribution of zeros of $\zeta(s)$. As an first attempt at capturing 
such a relationship
we have the following conjecture of Soundararajan \cite{Soundararajan}. 

\textit{Note:} Throughout we assume the Riemann Hypothesis. 
We recall that $\beta' \geq \tfrac 12$ for all non-trivial zeros of
$\zeta'(s)$ (see \cite{Speiser}) and that this is equivalent to the
Riemann Hypothesis.
\begin{Conjecture}[Soundararajan \cite{Soundararajan}]
We have
\begin{equation}
\tag{A}\label{eqn1}\stepcounter{equation}
\liminf_{\gamma \rightarrow \infty} (\gamma^+ - \gamma) \log \gamma = 0
\end{equation}
with $\gamma^+$ the ordinate suceeding $\gamma$, 
if and only if
\begin{equation}
\tag{B} \label{eqn2}\stepcounter{equation}
\liminf_{\gamma' \rightarrow \infty} (\beta' - \tfrac 12) \log \gamma' = 0
\end{equation}
\end{Conjecture}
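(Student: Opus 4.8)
The plan is to establish a quantitative two-way dictionary, both halves of which are read off from the partial-fraction expansion
\[
\frac{\zeta'}{\zeta}(s)=\sum_{|\gamma-t|\le 1}\frac{1}{s-\rho}+O(\log t)\qquad(s=\sigma+it,\ -1\le\sigma\le 2),
\]
together with the fact that on the line $\sigma=\tfrac12$ every zero-term $1/(\tfrac12+it-\rho)=1/\bigl(i(t-\gamma)\bigr)$ is purely imaginary, so that $\operatorname{Re}\frac{\zeta'}{\zeta}(\tfrac12+it)=-\tfrac12\log\frac{t}{2\pi}+o(1)$ is carried entirely by the archimedean factor while $\operatorname{Im}\frac{\zeta'}{\zeta}(\tfrac12+it)$ is what sees the zeros.

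For the implication (A)$\Rightarrow$(B), let $\rho=\tfrac12+i\gamma$ and $\rho^{+}=\tfrac12+i\gamma^{+}$ be consecutive zeros, put $\lambda=\gamma^{+}-\gamma$, and assume $\lambda\log\gamma$ is small; work near the midpoint $s_{0}=\tfrac12+i\tfrac{\gamma+\gamma^{+}}{2}$. Writing $\zeta(s)=(s-\rho)(s-\rho^{+})g(s)$ and using $1/(s_{0}-\rho)+1/(s_{0}-\rho^{+})=0$, the equation $\zeta'(s)=0$ becomes
\[
2(s-s_{0})+\Bigl((s-s_{0})^{2}+\tfrac{\lambda^{2}}{4}\Bigr)\,\Phi(s)=0,\qquad \Phi:=\frac{\zeta'}{\zeta}-\frac{1}{s-\rho}-\frac{1}{s-\rho^{+}},
\]
with $\Phi(s_{0})=\frac{\zeta'}{\zeta}(s_{0})=O(\log\gamma)$. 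A Rouché/fixed-point argument on a circle of radius $R$ with $\lambda^{2}\log\gamma\ll R\ll\lambda$ produces a zero $\rho'$ of $\zeta'$ with $|\rho'-s_{0}|\ll\lambda^{2}\log\gamma$, hence $\beta'-\tfrac12\le|\rho'-s_{0}|\ll\lambda^{2}\log\gamma$ and
\[
\bigl(\beta'-\tfrac12\bigr)\log\gamma'\ \ll\ \bigl((\gamma^{+}-\gamma)\log\gamma\bigr)^{2}\longrightarrow 0,
\]
so that (B) in fact holds in a quantitatively stronger, squared form. The only delicate point is the estimate $\Phi=O(\log\gamma)$ on the disc $|s-s_{0}|\le R$, which can fail if zeros cluster near $\gamma$; along a subsequence one may take the gap realizing (A) to be locally isolated, or factor the whole cluster out of $\zeta$ and estimate its contribution separately.

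The substantive direction is (B)$\Rightarrow$(A). Let $\rho'=\tfrac12+\delta+i\gamma'$, with $\delta>0$ by Speiser's theorem and $\delta\log\gamma'=o(1)$; we may assume $\zeta(\rho')\ne0$, since a zero common to $\zeta$ and $\zeta'$ is a multiple zero of $\zeta$, whence (A) at once. Taking real and imaginary parts in $\frac{\zeta'}{\zeta}(\rho')=0$ and using the remarks above gives
\[
\sum_{\gamma}\frac{\delta}{\delta^{2}+(\gamma'-\gamma)^{2}}=\tfrac12\log\frac{\gamma'}{2\pi}+o(1),\qquad \sum_{|\gamma-\gamma'|\le 1}\frac{\gamma'-\gamma}{\delta^{2}+(\gamma'-\gamma)^{2}}=O(\log\gamma').
\]
Because $\delta\log\gamma'=o(1)$, ordinates with $|\gamma-\gamma'|$ exceeding a fixed large multiple of $\sqrt{\delta/\log\gamma'}$ contribute only $o(\log\gamma')$ to the first sum; so unless two ordinates of $\zeta$ already lie within a bounded multiple of $\sqrt{\delta/\log\gamma'}$ of $\gamma'$ — which exhibits (A) immediately — a single nearest ordinate $\gamma_{0}$ must supply essentially the whole of $\tfrac12\log\gamma'$, forcing $|\gamma'-\gamma_{0}|\asymp\sqrt{\delta/\log\gamma'}=:\eta=o(1/\log\gamma')$. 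But then the $\gamma_{0}$-term of the second (imaginary) identity has absolute value $\asymp 1/\eta$, vastly exceeding the admissible total $O(\log\gamma')$; hence the remaining ordinates cannot be normally spaced — one of them, on the side of $\gamma'$ opposite to $\gamma_{0}$ (i.e.\ the other neighbour of $\gamma_{0}$), must also approach $\gamma'$ abnormally closely. Carried to its conclusion this yields consecutive zeros of $\zeta$ at distance $o(1/\log\gamma')$, which is (A).

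The main obstacle is making that last deduction lossless. Bounding, by the triangle inequality, the contribution of the ``remaining ordinates'' in the explicit formula costs a factor $\log\log\gamma'$ — a truncated harmonic sum over the $\asymp\log\gamma'$ ordinates within distance $1$ of $\gamma'$ — which only proves ``$\delta\log\gamma'\,(\log\log\gamma')^{2}\to0\Rightarrow$ (A)'', a hair short of (B) as stated. To close the gap I would exploit the cancellation encoded in $\sum_{\gamma\ne\gamma_{0}}1/(\gamma'-\gamma)=O(\log\gamma')$, i.e.\ the $S(t)$-type control on the argument of $\zeta$, rather than absolute values, inserting a second-moment or pair-correlation estimate for $\sum_{|\gamma-\gamma'|\le1}1/(\gamma'-\gamma)$ on average over $\gamma'$; alternatively — and this is the route the paper follows — one passes to the equivalent positive-proportion statement, counting zeros $\rho'$ with $(\beta'-\tfrac12)\log\gamma'$ small by Littlewood's lemma applied to $\zeta'$, and uses $|\zeta'(\tfrac12+it)|=\sqrt{Z'(t)^{2}+\theta'(t)^{2}Z(t)^{2}}$ to identify the resulting mean value of $\log|\zeta'(\tfrac12+it)|$ with a statistic of the gaps between the zeros of the Hardy function $Z$. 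Either way, achieving a lossless passage between the size of $\delta\log\gamma'$ and the size of the forced nearby gap is the crux.
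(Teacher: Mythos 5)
The statement you are addressing is Conjecture~1 of the paper, which the paper explicitly leaves open and makes no attempt to prove; what it proves instead is the Main Theorem, a quantitative positive-proportion analogue phrased in terms of the distribution functions $m(\varepsilon)$ and $m'(\varepsilon)$. Your sketch of (A)$\Rightarrow$(B), via Rouch\'e near the midpoint of a short gap, is consistent in spirit with Zhang's known proof of that direction and recovers the expected quadratic relation $(\beta'-\tfrac12)\log\gamma'\ll\bigl((\gamma^+-\gamma)\log\gamma\bigr)^2$; but the remark that one ``may take the gap realizing (A) to be locally isolated'' is not something the liminf in (A) grants you --- a sequence of shrinking gaps could occur only inside denser clusters --- and the alternative of ``factoring the whole cluster out'' is left entirely undeveloped. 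Since (A)$\Rightarrow$(B) is in any case Zhang's theorem, this is not the crux.

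The crux is (B)$\Rightarrow$(A), and there your proposal contains, by your own admission, a genuine gap: the triangle-inequality treatment of the ``remaining ordinates'' in the Hadamard identity costs $(\log\log\gamma')^2$, so what you actually prove is ``$(\beta'-\tfrac12)\log\gamma'\,(\log\log\gamma')^2\to0\Rightarrow$ (A),'' and the closing paragraph is a list of strategies rather than an argument. That loss is not a removable technicality in the pointwise setting. As the paper stresses when discussing Ki's work, a zero $\rho'$ with $(\beta'-\tfrac12)\log\gamma'=o(1)$ can, for all we currently know, be produced by configurations of zeros of $\zeta$ other than a single abnormally small consecutive gap, so the inference ``one near ordinate forces a second adjacent one'' is precisely the step that is not known to be lossless; this is why the paper states that ``it is possible for $B$ and the negation of $A$ to co-exist.'' You also misattribute the paper's method: the ``equivalent positive-proportion statement'' you invoke is not equivalent to (B), and the Littlewood-lemma/Hardy-$Z$ route does not appear anywhere in the paper. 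What the paper actually does is assume the much stronger hypothesis $m'(\varepsilon)\gg\varepsilon^A$, extract a $\delta/\log T$-well-spaced family of such $\rho'$ using Soundararajan's box lemma, and control the Dirichlet-polynomial error $\mathcal{E}_{T,N}$ in Selberg's formula for $\zeta'/\zeta$ on all but a small exceptional subset via high-moment estimates (Lemmas~3--5, Propositions~1--2). The power-law density hypothesis is essential to make the Chebyshev/moment argument bite, and it is exactly what the bare liminf hypothesis (B) does not provide; the conjecture therefore remains open, and your proposal does not close it.
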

%By analogy with the 
%Gauss-Lucas theorem for polynomials we 
%expect all the non-trivial zeros 
%of $\zeta'(s)$ to lie to the right of the critical line. 

Zhang \cite{Zhang} shows that $A \implies B$ (see also
\cite{Garaev} for a partial converse). 
Ki \cite{Ki} obtained
a necessary and sufficient condition for the negation of $B$. 
Ki's result shows that
zeros $\rho'$ with $(\beta' - \tfrac 12)\log \gamma' = o(1)$
arise not only from small gaps between zeros of $\zeta(s)$
but also, for example, from clusters of regularly spaced zeros of $\zeta(s)$. 
%Ki's equivalence shows that to establish $B \implies A$ one would have
%to show the truth of $A$ and $B$ separetely.
Therefore given our current knowledge about the zeros of $\zeta(s)$
it is possible for $B$ and the negation of $A$ to co-exist. 
The assertion $A$ is arithmetically very interesting, since, 
following an idea of Montgomery (made explicit by Conrey and Iwaniec
in \cite{ConreyIwaniec})
if there are many small gaps between consecutive zeros of
$\zeta(s)$ then the class number of $\mathbb{Q}(\sqrt{-d})$ is large
and there are no Siegel zeros.
%
%$A$ and $B$ are not logically equivalent. 
%

A more recent attempt at capturing the relation between
the distribution of zeros of $\zeta(s)$ and $\zeta'(s)$ is
due to Farmer and Ki \cite{FarmerKi}.
%Recall that in a rectangle $0 \leq \sigma \leq 1$, $T \leq t \leq 2T$
%both $\zeta(s)$ and $\zeta'(s)$ have asymptotically $N(T) \sim (T / 2\pi)
%\log T$ zeros (see \cite{Berndt}).
Let $w(x)$ be the indicator function of the unit interval. 
Following Farmer and Ki we introduce two distribution functions,
\begin{align*}
m'(\varepsilon) & := \liminf_{T \rightarrow \infty} \frac{2\pi}{T \log T}
\sum_{T \leq \gamma' \leq 2T} w \bigg ( \frac{(\beta' - \tfrac 12)\log
T}{\varepsilon} \bigg ) \\
m(\varepsilon) & := \liminf_{T \rightarrow \infty} \frac{2\pi}{T \log T}
\sum_{T \leq \gamma \leq 2T} w \bigg ( \frac{(\gamma^+ - \gamma) \log T}{
\varepsilon} \bigg ).
\end{align*}
These are indeed distribution functions, since in a rectangle of
length $T$, both $\zeta(s)$ and $\zeta'(s)$ have asymptotically
$N(T) \sim (T/2\pi) \log T$ zeros (see \cite{Berndt}), and it is
conjectured 
that $m'(v) \rightarrow 1$ as $v \rightarrow \infty$,
whereas it is known that $m(v) \rightarrow 1$ as $v \rightarrow \infty$
(see \cite{Soundararajan}, \cite{Fujii}).

Zhang shows in \cite{Zhang} that if $m(\varepsilon) > 0$
for all $\varepsilon > 0$, then $m'(\varepsilon) > 0$.
An analogue of Soundararajan's conjecture would assert that
$m(\varepsilon) > 0$ for all $\varepsilon > 0$ if and only if
$m'(\varepsilon) > 0$ for all $\varepsilon > 0$. 
%This analogue is also unlikely to be proven anytime soon. 
As explained by
Farmer and Ki in \cite{FarmerKi} if for example the zeros are well-spaced with
sporadic large gaps, something we cannot rule out at present,
then in principle $m'(\varepsilon) > 0$ is not enough to imply
$m(\varepsilon) > 0$. 
%
%This analogue suffers from the same difficulty as
%Soundararajan's original conjecture.
Farmer and Ki propose the following alternative
conjecture.
\begin{Conjecture}[Farmer and Ki \cite{FarmerKi}]  
If $m' (\varepsilon) \gg \varepsilon^v$ with a $v < 2$ as
$\varepsilon \rightarrow 0$ then
$m (\varepsilon) > 0$ for all $\varepsilon > 0$. 
\end{Conjecture}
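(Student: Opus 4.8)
I would argue by contraposition in a strengthened form: if $m(\varepsilon_0)=0$ for some fixed $\varepsilon_0>0$, then $m'(\varepsilon)=0$ for every sufficiently small $\varepsilon>0$. This contradicts $m'(\varepsilon)\gg\varepsilon^v$ for \emph{any} $v$, so it proves more than asked --- namely the full equivalence ``$m(\varepsilon)>0$ for all $\varepsilon>0$ if and only if $m'(\varepsilon)>0$ for all $\varepsilon>0$''. Thus suppose that along some sequence $T\to\infty$ the count $G(T):=\#\{\gamma\in[T,2T]:(\gamma^+-\gamma)\log T\le\varepsilon_0\}$ is $o(T\log T)$; the goal is to deduce $m'(\varepsilon)=0$ for small $\varepsilon$.

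The engine is a pointwise statement: a zero of $\zeta'$ very close to the critical line forces a genuinely short gap between zeros of $\zeta$ nearby. Evaluating the explicit formula
\[
\operatorname{Re}\frac{\zeta'}{\zeta}(\sigma+it)=-\tfrac12\log\frac{|t|}{2\pi}+\sum_{\rho}\frac{\sigma-\tfrac12}{(\sigma-\tfrac12)^2+(t-\gamma)^2}+O(1)
\]
(valid on RH, where $\beta=\tfrac12$ for every zero $\rho$) at $s=\rho'=\tfrac12+\delta+i\gamma'$, $\delta:=\beta'-\tfrac12$, and using that $\zeta'(\rho')=0\iff\frac{\zeta'}{\zeta}(\rho')=0$ for $\operatorname{Re}\rho'>\tfrac12$, yields the exact identity
\[
\sum_{\gamma}\frac{\delta}{\delta^2+(\gamma'-\gamma)^2}=\tfrac12\log\frac{\gamma'}{2\pi}+O(1).
\]
Suppose $\delta\log\gamma'\le\varepsilon$ with $\varepsilon$ small. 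Then no zero $\gamma$ lies within $\delta$ of $\gamma'$ (a single such term would be $\gg\varepsilon^{-1}\log\gamma'$, overwhelming the right side), so $\delta\le|\gamma'-\gamma|$ throughout and hence $\sum_{\gamma}(\gamma'-\gamma)^{-2}\gg\varepsilon^{-1}\log^2\gamma'$. A dyadic decomposition, using the standard bound $\#\{\gamma:|\gamma-t|\le h\}\ll 1+h\log t$, shows this sum is dominated by the zeros $\gamma$ with $|\gamma-\gamma'|\ll\sqrt{\varepsilon}/\log T$: these are far denser there than the average spacing $\asymp 1/\log T$, and --- locating them relative to $\gamma'$ by the companion imaginary part estimate $\sum_{|\gamma'-\gamma|\le1}(\gamma'-\gamma)\bigl(\delta^2+(\gamma'-\gamma)^2\bigr)^{-1}=O(\log\gamma')$ --- one extracts consecutive zeros $\gamma<\gamma^+$ of $\zeta$ with $\gamma^+-\gamma\ll\sqrt{\varepsilon}/\log T$ and $|\gamma-\gamma'|\ll\sqrt{\varepsilon}/\log T$. (So a cluster of merely average-spaced zeros --- the Ki phenomenon --- never produces such a $\rho'$: that needs genuine below-average spacing, i.e.\ a short gap.)

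It remains to convert this into the clean inequality
\[
\#\{\rho':\gamma'\in[T,2T],\ (\beta'-\tfrac12)\log T\le\varepsilon\}\ \ll\ \#\{\gamma\in[T,2T]:(\gamma^+-\gamma)\log T\le C\sqrt{\varepsilon}\}
\]
with an absolute implied constant. By the pointwise statement the ordinates $\gamma'$ on the left lie in a union of intervals of length $\ll\sqrt{\varepsilon}/\log T$, one about each short gap on the right; counting zeros of $\zeta'$ interval by interval gives the bound, once one controls how many zeros of $\zeta'$ near the line can pile up about a single short gap. Granting this, pick $\varepsilon$ so small that $C\sqrt{\varepsilon}\le\varepsilon_0$: along our sequence the right side is $\le G(T)=o(T\log T)$, so $m'(\varepsilon)=0$, a contradiction.

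The main obstacle is precisely this last count. Even on RH, the only worst-case bound for the number of zeros of $\zeta'$ (or of $\zeta$) in a short interval is of the shape $\ll 1+h\log t+\log t/\log\log t$, too weak to bound the pile-up by $O(1)$ per short gap; a lossy term here would only yield $m'(\varepsilon)\ll\sqrt{\varepsilon}$, which is insufficient for the conjecture when $v\in[\tfrac12,2)$. One gets around it by noting that an anomalous pile-up of zeros of $\zeta'$ near a point forces, through the identity above, an anomalously dense cluster of zeros of $\zeta$ there --- hence a proportional number of short gaps --- so that a cluster-by-cluster accounting (bounding by $O(k)$ the zeros of $\zeta'$ attached to a block of $k$ zeros of $\zeta$ confined to an interval of length $\ll\sqrt{\varepsilon}/\log T$, a block that itself contributes $\ge k-1$ short gaps) makes the bookkeeping exact. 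A more analytic alternative, for the averaged form of the displayed inequality, is to apply Littlewood's lemma to $\zeta'$ on $[\tfrac12+\varepsilon/\log T,\,2]\times[T,2T]$ and relate $\int\log|\zeta'(\tfrac12+\tfrac{\varepsilon}{\log T}+it)|\,dt$ to the gap structure of $\zeta$ via the explicit formula --- but the same short-interval issue reappears as the crux.
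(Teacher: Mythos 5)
Your high-level strategy is on the same track as the paper's: argue by contraposition, invoke the Hadamard-type identity
$\sum_{\gamma}\frac{\beta'-\tfrac12}{(\beta'-\tfrac12)^2+(\gamma'-\gamma)^2}=\tfrac12\log\gamma'+O(1)$
to show that $\rho'$ near the line forces a nearby zero $\gamma_c$ with $|\gamma'-\gamma_c|\ll\sqrt{\varepsilon}/\log T$, then locate a second nearby zero and convert to short gaps. But there are two genuine gaps, and together they are the entire content of the paper.

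First, your pointwise claim that a zero of $\zeta'$ within $\varepsilon/\log T$ of the line \emph{forces} a gap $\ll\sqrt{\varepsilon}/\log T$ is not true pointwise, and your own text contains the inconsistency: in the dyadic step you invoke $\#\{\gamma:|\gamma-t|\le h\}\ll 1+h\log t$, but two paragraphs later you correctly note that the best bound on RH for very short $h$ has the extra term $\log t/\log\log t$. That extra term is precisely what lets anomalously dense clusters absorb the $\tfrac12\log\gamma'$ on the right side without any single gap being $\ll\sqrt{\varepsilon}/\log T$; this is Ki's obstruction. The paper does not claim the pointwise statement. Proposition 2 proves the $|\rho'-\rho_c|$ estimate only for ``all but $\kappa|\mathcal{Z}|$'' of the $\rho'$, and the exceptional set is controlled by a $2k$-th moment (large-sieve) bound on the Dirichlet polynomials $A_N,B_N$ entering Selberg's formula. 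Your proposal has no replacement for this averaging; without it the first step does not go through.

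Second, the ``pile-up'' count that you flag as the crux is not resolved by what you write. You suggest a cluster-by-cluster accounting and a Littlewood-lemma variant, both of which you then acknowledge run into the same short-interval zero-counting difficulty. The paper resolves it structurally, not analytically: Soundararajan's lemma (Lemma 7 here) states that between two consecutive ordinates of $\zeta$, the box $\tfrac12\le\sigma<\tfrac12+1/\log T$ contains at most one zero of $\zeta'$. This gives an unconditional $O(1)$ bound on the pile-up per gap and also furnishes the well-spaced sequence required to apply the moment estimates. This lemma is the missing ingredient in your proposal.

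A consequence of these two points is that your claimed strengthening (that $m(\varepsilon_0)=0$ forces $m'(\varepsilon)=0$ for all small $\varepsilon$, i.e. the unconditional equivalence of ``$m>0$ for all $\varepsilon$'' and ``$m'>0$ for all $\varepsilon$'') is stronger than what the paper proves and is unlikely to follow from these methods. The paper's moment argument chooses $k\asymp A\log(\varepsilon\delta\kappa)^{-1}$ subject to $N^k\le T/\log T$, so the power-law hypothesis $m'(\varepsilon)\gg\varepsilon^A$ (equivalently $v<2$ in the Farmer--Ki statement, or $A$ finite) is not cosmetic: it caps the number of moments needed. Dropping it would let the exceptional (Ki-type cluster) set swamp the count, which is exactly the scenario the moment bound is there to exclude.

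In short: correct skeleton, but the two steps you mark as ``to be filled in'' are not routine -- they are Proposition 2 (moments controlling the exceptional clusters) and Lemma 7 (Soundararajan's at-most-one-$\rho'$-per-gap), and without them the argument does not close.
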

This is a realistic conjecture since we expect that $m'(\varepsilon)
\sim (8/9\pi) \varepsilon^{3/2}$ as $\varepsilon \rightarrow 0$ (see \cite{Mezzadri}).
%Farmer and Ki explain that if the 
%zeros are well-spaced, something we cannot rule out at present,
%then in principle $m'(\varepsilon) > 0$ is not enough to imply
%$m(\varepsilon) > 0$. They 
Farmer and Ki comment ``we intend this as a general
conjecture, applying to the Riemann zeta-function but also to
other cases such as a sequence of polynomials with all zeros on the
unit circle'' and that ``stronger statements should be true for the
zeta function''. 
Our main result is a proof of Conjecture 2 in 
a stronger and quantitative form for the Riemann zeta-function.
\begin{Main} \label{thm1}
Let $A,\delta > 0$ be given.
\begin{itemize}
\item If $m'(\varepsilon) \gg \varepsilon^{A}$
as $\varepsilon \rightarrow 0$ then $m(\varepsilon^{1/2}) \gg
\varepsilon^{A + \delta}$ for all $\varepsilon \leq 1$.  
\item If $m(\varepsilon^{1/2}) \gg
\varepsilon^{A}$ as $\varepsilon \rightarrow 0$ then $m'(\varepsilon)
\gg \varepsilon^{A + \delta}$ for all $\varepsilon \leq 1$.
\end{itemize} 
\end{Main}
We conjecture that $m'(\varepsilon) \asymp
m(\varepsilon^{1/2})$ provided that one of $m(\varepsilon)$ or $m'(\varepsilon)$
is $\gg \varepsilon^{A}$ for some $A > 0$. This is consistent with the 
expectation that $m(\varepsilon) \sim (\pi / 6) \varepsilon^3$ and 
$m'(\varepsilon) \sim (8/9\pi) \varepsilon^{3/2}$ as $\varepsilon \rightarrow 0$
(see \cite{Mezzadri}).
Our Main Theorem could be restated as saying that 
$$
\log m(\varepsilon) \sim \log m'(\varepsilon^{1/2})
$$
as $\varepsilon \rightarrow 0$ provided that one of $m(\varepsilon)$ or
$m'(\varepsilon)$ is greater than $\varepsilon^A$.
As a consequence of the Main Theorem we obtain 
estimates for $m'(\varepsilon)$
assuming the Pair Correlation
Conjecture. 
\begin{corollary}
Assume the Pair Correlation Conjecture. Let $\delta > 0$.
Then 
$$
\varepsilon^{3/2 + \delta} \ll m'(\varepsilon) \ll \varepsilon^{3/2 - \delta}
$$
as $\varepsilon \rightarrow 0$. 
\end{corollary}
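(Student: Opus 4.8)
The plan is to reduce the corollary, via the Main Theorem, to an estimate for $m(\varepsilon)$: I will show that the Pair Correlation Conjecture forces $m(\varepsilon)=\varepsilon^{3+o(1)}$ as $\varepsilon\to 0$, that is, $\varepsilon^{3+\delta}\ll m(\varepsilon)\ll\varepsilon^{3-\delta}$ for every $\delta>0$, and then the comparison of $m'(\varepsilon)$ with $m(\varepsilon^{1/2})$ encoded in the Main Theorem (its two bullets, or more conveniently the inequalities underlying them) transfers this to $m'(\varepsilon)=\varepsilon^{3/2+o(1)}$, which is exactly the asserted bound. The upper bound $m(\varepsilon)\ll\varepsilon^{3}$ is the easy half: under the Pair Correlation Conjecture, Montgomery's pair correlation asymptotic is valid for arbitrary, not merely band--limited, test functions, so the number of pairs of zeros $T\le\gamma,\gamma'\le 2T$ with $0<\gamma'-\gamma\le\varepsilon/\log T$ is $(1+o(1))\,N(T)\int_0^{\varepsilon/(2\pi)}\bigl(1-\bigl(\tfrac{\sin\pi u}{\pi u}\bigr)^2\bigr)\,du\ll\varepsilon^3 N(T)$ for $\varepsilon\le 1$; since each $\gamma$ counted by $m(\varepsilon)$ gives rise to the distinct such pair $(\gamma,\gamma^+)$, we get $m(\varepsilon)\ll\varepsilon^3$.

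The lower bound $m(\varepsilon)\gg\varepsilon^3$ is the substantive part. Used from below, the pair correlation asymptotic at the scales $\varepsilon/\log T$ and $\varepsilon/(2\log T)$ shows there are $\gg\varepsilon^3 N(T)$ pairs of zeros at distance in $\bigl(\varepsilon/(2\log T),\varepsilon/\log T\bigr]$; for any such pair $(\gamma,\gamma'')$ one has $\gamma^+-\gamma\le\gamma''-\gamma\le\varepsilon/\log T$, so each contributes a zero $\gamma$ with $\gamma^+-\gamma\le\varepsilon/\log T$. To turn the $\gg\varepsilon^3 N(T)$ close pairs into $\gg\varepsilon^3 N(T)$ \emph{distinct} such $\gamma$ one must rule out that the near--collisions are concentrated in a sparse set of anomalously dense clusters of zeros, and here the full strength of the Pair Correlation Conjecture enters: the prescribed cubic decay of the pair count across all the scales $\varepsilon/\log T,\varepsilon/(2\log T),\varepsilon/(4\log T),\dots$ is incompatible with such clustering, since a dense cluster would produce far too many close pairs at the finer scales relative to the coarser ones. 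Concretely I would bound a second moment such as $\sum_{T\le\gamma\le 2T}N\bigl(\gamma,\gamma+\varepsilon/\log T\bigr)^2$ using the pair correlation estimates at these scales, and then apply a Paley--Zygmund (second--moment) inequality to extract $\#\{\,T\le\gamma\le 2T:\gamma^+-\gamma\le\varepsilon/\log T\,\}\gg\varepsilon^3 N(T)$, i.e.\ $m(\varepsilon)\gg\varepsilon^3$.

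Granting $m(\varepsilon)=\varepsilon^{3+o(1)}$, the corollary follows: the second bullet of the Main Theorem with $A=3/2$ (admissible since $m(\varepsilon^{1/2})\gg\varepsilon^{3/2}$) gives $m'(\varepsilon)\gg\varepsilon^{3/2+\delta}$, while the inequality underlying the first bullet, combined with $m(\varepsilon^{1/2})\ll\varepsilon^{3/2}$, gives $m'(\varepsilon)\ll\varepsilon^{3/2-\delta}$. The main obstacle is precisely the lower bound $m(\varepsilon)\gg\varepsilon^3$: converting, using only pair correlation data, the abundance of near--collisions predicted by the conjecture into an abundance of \emph{consecutive} gaps of size $\le\varepsilon/\log T$, rather than a few anomalously dense clumps of zeros; everything else — the appeal to the Main Theorem, the bookkeeping of the $\delta$--losses, and the handling of the $\liminf$'s in the definitions of $m$ and $m'$ — is routine.
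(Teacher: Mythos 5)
Your high-level plan matches the paper's: reduce the corollary to showing, under the Pair Correlation Conjecture, that $m(\varepsilon) = \varepsilon^{3+o(1)}$, and then transfer to $m'$ via the Main Theorem; the transfer step is handled correctly (upper bound for $m'$ by contradiction via the first bullet, lower bound via the second), and your upper bound $m(\varepsilon)\ll\varepsilon^3$ is the same easy pigeonholing of each small gap $(\gamma,\gamma^+)$ into a close pair.

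The gap is in the lower bound $m(\varepsilon)\gg\varepsilon^{3+o(1)}$, which you yourself flag as the crux. You propose to bound $\sum_{T\le\gamma\le 2T} N(\gamma,\gamma+\varepsilon/\log T)^2$ ``using the pair correlation estimates'' and then apply Paley--Zygmund. But that second moment is a sum over \emph{triples} of zeros $(\gamma,\gamma_1,\gamma_2)$ with $\gamma_1,\gamma_2$ both in a short window to the right of $\gamma$; it is a triple-correlation quantity, and the Pair Correlation Conjecture says nothing about it. Your heuristic — that cubic decay of the pair count at the nested scales $\varepsilon, \varepsilon/2, \varepsilon/4,\dots$ is ``incompatible with clustering'' — is suggestive but does not actually yield a moment bound: a single cluster of $k$ zeros inside one window of width $\varepsilon/\log T$ contributes $\asymp k^2$ pairs at \emph{every} coarser scale $\ge\varepsilon$, so the constraint from those scales is only that the number of such clusters is $\lesssim \varepsilon^3 N(T)/k^2$, which is not enough by itself to control $\sum_\gamma N(\gamma,\gamma+\varepsilon/\log T)^2$ once you also need to handle the cross terms between clusters and the contribution of moderately dense windows. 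So the step from ``$\gg\varepsilon^3 N(T)$ close pairs'' to ``$\gg\varepsilon^{3+o(1)} N(T)$ distinct $\gamma$ with a small consecutive gap'' is not closed by your argument.

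The paper closes exactly this gap by a different device: it does \emph{not} use PCC to control clustering. Instead it invokes an unconditional (RH-only) high-moment bound for the short-window counting function, namely $\sum_{T\le\gamma\le 2T}\bigl|N(\gamma+\tfrac{2\pi}{\log T})-N(\gamma-\tfrac{2\pi}{\log T})\bigr|^{2k}\ll (Ck)^{2k}T\log T$ (Lemma 9, proved via the explicit formula, Gonek's lemma, and Dirichlet-polynomial moment estimates), together with the resulting tail bound (Corollary 3) that at most $\varepsilon^{A+1}T\log T$ ordinates have more than $\varepsilon^{-\delta}$ zeros in a $4\pi/\log T$ window. PCC is then used only once, to produce $\gg\varepsilon^3 N(T)$ pairs at distance in $[\tfrac{\varepsilon}{2\log T},\tfrac{\varepsilon}{\log T}]$; the ``good'' $\gamma_1$ (at most $\varepsilon^{-\delta}$ zeros nearby) can each absorb at most $\varepsilon^{-\delta}$ of these pairs and must therefore have a small consecutive gap, while the ``bad'' $\gamma_1$ are dispatched by Cauchy--Schwarz against the second moment from Lemma~9. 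This gives $m(\varepsilon)\gg\varepsilon^{3+\delta}$, which (after the routine transfer you describe) is exactly what the corollary needs. If you want to complete your argument as written, you would need to either import this unconditional moment bound or assume higher correlation conjectures; PCC alone does not furnish the second-moment control your Paley--Zygmund step requires.
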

An assumption on the zero distribution in Corollary 1 is inevitable, since
$m'(\varepsilon) \rightarrow 0$ implies that almost all the zeros of
$\zeta(s)$ are simple. Corollary 1 allows one to quantify the 
loss in Levinson's method 
coming from the zeros of $\zeta'(s)$ lying closely to the half-line.
Unfortunately Corollary 1 is a conditional result, and as such it
cannot be used to put a greater proportion of the zeros of $\zeta(s)$ on
the half-line (see \cite{Feng} for related work).

A final consequence of our work
is a criterion for the non-existence of 
the Siegel zero in terms of the zeros of $\zeta'(s)$. We state it only
for completeness since a stronger result has been obtained by Farmer and Ki
\cite{FarmerKi}.
\begin{corollary}
Let $A > 0$. 
If $m'(\varepsilon) \gg \varepsilon^{A}$, for all $\varepsilon > 0$,
then for primitive characters $\chi$ modulo $q$, $$L(1;\chi) > (\log q)^{-18}.$$
for all $q$ sufficiently large. 
\end{corollary}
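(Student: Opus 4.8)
The plan is to deduce the corollary from the Main Theorem together with the argument of Montgomery and Conrey--Iwaniec \cite{ConreyIwaniec} linking small gaps between zeros of $\zeta(s)$ to lower bounds for $L(1;\chi)$.

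First I would apply the first part of the Main Theorem: the hypothesis $m'(\varepsilon) \gg \varepsilon^{A}$ for all $\varepsilon > 0$ yields $m(\varepsilon^{1/2}) \gg \varepsilon^{A + \delta}$ for every $\varepsilon \le 1$, that is $m(\lambda) \gg \lambda^{2(A + \delta)}$ for every $\lambda \le 1$. In particular, for any fixed $\lambda \in (0,1]$ one has $m(\lambda) \ge c(\lambda) > 0$; unwinding the definition of $m$ and the $\liminf$, this means that for all sufficiently large $T$ the number of ordinates $\gamma \in [T, 2T]$ with $\gamma^{+} - \gamma \le \lambda/\log T$ is at least $\tfrac{1}{2} c(\lambda) \cdot \tfrac{T}{2\pi}\log T$. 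So the hypothesis supplies, for each fixed small $\lambda$, a positive proportion of gaps of length $\le \lambda/\log T$ at every large height.

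Next I would run the repulsion argument, arguing by contradiction. For complex primitive $\chi$ the bound $|L(1;\chi)| \gg 1/\log q$ is classical, so suppose instead that $\chi$ is real primitive mod $q$ with $L(1;\chi) \le (\log q)^{-18}$ and $q$ large; then $L(s,\chi)$ has a Siegel zero $\beta_{0} = 1 - \eta/\log q$ with $\eta$ small (comparable to $L(1;\chi)\log q$ up to powers of $\log q$). Following Conrey--Iwaniec \cite{ConreyIwaniec}, and ultimately Montgomery, such a zero repels the zeros of $\zeta(s)$ at heights $T$ equal to a fixed power of $q$: the number of $\gamma \in [T, 2T]$ having a neighbouring ordinate within $\lambda/\log T$ becomes $o(T\log T)$, with the rate of decay governed by $\eta$. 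Choosing $\lambda$ to be the absolute constant required by their estimate and comparing with the lower bound of the previous paragraph forces $\eta$ to exceed a fixed negative power of $\log q$, contradicting $L(1;\chi) \le (\log q)^{-18}$ once the constants in \cite{ConreyIwaniec} are followed through carefully; the implied constant, which depends on $A$ and $\lambda$, is harmless since we only claim the bound for $q$ sufficiently large.

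The main difficulty is bookkeeping rather than conceptual: one must match the precise quantitative form of ``many small gaps'' demanded by the Conrey--Iwaniec repulsion estimate with the positive-proportion statement the Main Theorem delivers --- in particular exploiting that the $\liminf$ in the definition of $m(\lambda)$ lets us work at the specific heights $T \asymp q^{B}$ relevant to $q$ --- and then propagate the numerical constants far enough to land on the exponent $18$. Since Farmer and Ki \cite{FarmerKi} obtain a stronger bound by a different route, no effort at optimisation is required here.
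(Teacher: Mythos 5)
Your approach is structurally the same as the paper's: apply the Main Theorem to extract a positive proportion of small gaps between consecutive zeros of $\zeta(s)$, then invoke the Montgomery/Conrey--Iwaniec link between small gaps and Siegel zeros. The only divergence is that you plan to re-run the Conrey--Iwaniec repulsion argument and chase constants to recover the exponent $18$; the paper instead cites Theorem~1.1 of Conrey--Iwaniec directly, which already says that $m(1/4) > 0$ implies $L(1;\chi) > (\log q)^{-18}$, so the exponent $18$ is read off from their statement and no re-derivation or constant-tracking is needed. Your plan is sound in principle but does substantially more work than required, and as written it leaves the specific exponent $18$ unverified, since you defer the bookkeeping rather than carrying it out.
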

\begin{proof}
If $m'(\varepsilon) \gg \varepsilon^{A}$ for every $\varepsilon > 0$
then $m(1/4) > 0$ by our Main Theorem, hence $L(1;\chi) > (\log q)^{-18}$ for all $q$ sufficiently
large by
Theorem 1.1 of Conrey-Iwaniec, \cite{ConreyIwaniec}. 
\end{proof}
With some care it is possible to turn the above Corollary into an 
effective result. 
%For simplicity we do not do so.
By Dirichlet's formula Corollary 2 also implies that the class number of 
$\mathbb{Q}(\sqrt{-d})$ is at least as large as $ c \sqrt{d} (\log d)^{-18}$ 
with $c$ %absolute 
constant. 
%
%for
%some absolute constant $c > 0$, and provided that $d$ is large enough.

Farmer and Ki show
that if $m'(\varepsilon) \gg \exp( - \varepsilon^{-1/2 + \delta})$
as $\varepsilon \rightarrow 0$, for some $\delta > 0$, 
then there are $N(T) / \log\log T$
ordinates of zeros of $\zeta(s)$ lying in $[T;2T]$ and such that $(\gamma^{+} - \gamma)\log \gamma = o(1)$ as $T \rightarrow \infty$. Using the result of
Conrey and Iwaniec \cite{ConreyIwaniec} this is enough to rule out the
existence of Siegel zeros. It is an interesting question to determine 
%largest possible exponent $\kappa > 0$ such that $m'(\varepsilon) \gg
%\exp( - \varepsilon^{-\kappa + \delta})$, for some $\delta > 0$, implies that
%there are no Siegel zero. By Farmer and Ki's result $\kappa \geq \tfrac 12$.
%
whether, given the current technology, one can increase the exponent
$\tfrac 12$ in Farmer and Ki's assumption $m'(\varepsilon) \gg
\exp(-\varepsilon^{-1/2 + \delta})$ and still guarantee the non-existence
of Siegel zeros. %We expect the answer to be yes, but we have been unable
%to establish such a result.
%
%will still yield the non-existence
%of Siegel zeros as a conclusion.
%
%can be increased without allowing
%the existence of Siegel zeros.
%
%and still guarantee the non-existence of Siegel zeros.
%
% lower bound for $m'(\varepsilon)$ can be 
%increased with the current technology.
%%
%
%The exponent $\tfrac 12$ in Farmer and Ki's lower
%bound for $m'(\varepsilon)$ seems 

%Our Main Theorem follows from the more precise Theorem 1 and
%Theorem 2 which we describe in the section below.

\section{Main ideas}

The first part of our Main Theorem follows from the stronger Theorem 1 below.
\begin{theorem} \label{thm2}
Let $A, \delta > 0$. There is a constant $C = C(\delta,A)$ such that
if $0 < \varepsilon < C$ and $m'(\varepsilon) \geq c \varepsilon^{A}$
then $m(\varepsilon^{1/2 - \delta}) \geq (c/8) \varepsilon^{A}$. 
\end{theorem}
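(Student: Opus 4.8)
The plan is to show that every zero $\rho'=\beta'+i\gamma'$ of $\zeta'$ lying within $\varepsilon/\log T$ of the critical line forces the zeros of $\zeta$ to be abnormally dense near $\gamma'$ \emph{at the scale} $\sqrt{\varepsilon}/\log T$, and then to turn this clustering into short gaps between consecutive zeros of $\zeta$. By dyadic splitting and passing to integer counts, it suffices to produce $C=C(\delta,A)$ so that, for $0<\varepsilon<C$ and $T$ large, the inequality
$$N'(\varepsilon,T):=\#\bigl\{\rho':\ T\le\gamma'\le 2T,\ (\beta'-\tfrac12)\log T\le\varepsilon\bigr\}\ \ge\ c\,\varepsilon^{A}\cdot\tfrac{T}{2\pi}\log T$$
implies $N(\varepsilon^{1/2-\delta},T):=\#\{\gamma:\ T\le\gamma\le 2T,\ (\gamma^{+}-\gamma)\log T\le\varepsilon^{1/2-\delta}\}\ \ge\ \tfrac{c}{8}\,\varepsilon^{A}\cdot\tfrac{T}{2\pi}\log T$; dividing by $\tfrac{T}{2\pi}\log T$ and taking $\liminf$ in $T$ then gives $m(\varepsilon^{1/2-\delta})\ge\tfrac{c}{8}\varepsilon^{A}$.

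The local input is the following. Under the Riemann Hypothesis a zero $\rho'$ of $\zeta'$ with $\beta'>\tfrac12$ is not a zero of $\zeta$, so $\tfrac{\zeta'}{\zeta}(\rho')=0$ (zeros with $\beta'=\tfrac12$ would be multiple zeros of $\zeta$, a trivial case); inserting the Hadamard expansion of $\tfrac{\zeta'}{\zeta}$ and taking real parts yields, with $\eta:=\beta'-\tfrac12$,
$$\sum_{\gamma}\frac{\eta}{\eta^{2}+(\gamma'-\gamma)^{2}}\ =\ \tfrac12\log\tfrac{\gamma'}{2\pi}+O(1),$$
the sum over all ordinates $\gamma$ of zeros of $\zeta$; it converges because $\eta$ is tiny, and near the critical line the archimedean factor contributes only $\tfrac12\log\tfrac{\gamma'}{2\pi}+O(1)$. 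Since every summand is positive and the right side is $\asymp\log T$, for $\varepsilon$ below an explicit threshold: (i) no ordinate of $\zeta$ lies within $\eta$ of $\gamma'$; and (ii) the ``Poisson mass'' $\tfrac12\log\tfrac{\gamma'}{2\pi}$ cannot be produced by the zeros of $\zeta$ being evenly spaced, at the mean spacing, near $\gamma'$, since such a configuration contributes only $\asymp\eta\log^{2}T\ll\varepsilon\log T$. Hence the mass must come either from an abnormally tight cluster of $\zeta$-zeros of spacing $\ll\sqrt{\varepsilon}/\log T$ near $\gamma'$ — which already supplies a consecutive pair $\gamma,\gamma^{+}$ with $(\gamma^{+}-\gamma)\log T\le\varepsilon^{1/2-\delta}$, once $\varepsilon$ is small enough to absorb the constants into $\varepsilon^{-\delta}$ — or else from a single $\zeta$-zero sitting at distance $\asymp\sqrt{\eta/\log T}\ll\sqrt{\varepsilon}/\log T$ from $\gamma'$. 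This second alternative is the analogue of Ki's configuration and is where the real work lies.

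Call a near-line $\rho'$ \emph{good} if the mass above is not almost entirely carried by one $\zeta$-zero. For good $\rho'$ the previous step places a consecutive pair of $\zeta$-zeros with gap $\le\varepsilon^{1/2-\delta}/\log T$ inside a short interval $I_{\rho'}$, of length $\asymp\sqrt{\varepsilon}/\log T$, centred at $\gamma'$; since any fixed such gap can belong to only boundedly many of the $I_{\rho'}$, a careful tally converts the good $\rho'$ into at least $\tfrac18$ as many distinct short gaps of $\zeta$, once one has disposed of the \emph{bad} $\rho'$ (those of Ki type). Disposing of the bad $\rho'$ is the one place where the full strength of the hypothesis $N'(\varepsilon,T)\ge c\,\varepsilon^{A}\cdot\tfrac{T}{2\pi}\log T$ must be spent — showing either that they are too few to matter, or that an accumulation of bad $\rho'$ near a common ordinate $\gamma_{0}$ of $\zeta$ itself forces a short gap — and I expect this to be the main obstacle. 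A natural handle is the pair of relations $\mathrm{Re}\,\tfrac{\zeta'}{\zeta}(\rho')=0$ and $\mathrm{Im}\,\tfrac{\zeta'}{\zeta}(\rho')=0$, used together and against their $\gamma'$-shifts, to bound the number of bad $\rho'$ attached to each $\gamma_{0}$; ruling out a positive proportion of such Ki-type near-line zeros of $\zeta'$, with no further input about the vertical distribution of the $\gamma$'s, is the crux of the whole argument.
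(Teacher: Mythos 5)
Your high-level framing is sound and matches the paper's: you write down the correct Hadamard identity $\sum_\gamma \eta/(\eta^2+(\gamma'-\gamma)^2)=\tfrac12\log\gamma'+O(1)$ with $\eta=\beta'-\tfrac12$, you correctly observe that the $\asymp\log T$ mass forces either a tight cluster of $\zeta$-zeros near $\gamma'$ (which yields a short gap) or a single $\zeta$-zero $\rho_c$ at distance $\asymp\sqrt{\eta/\log T}$ from $\rho'$, and you correctly identify that ruling out a positive proportion of the latter (``Ki-type'') configuration is the crux. But you do not actually do this: you say ``I expect this to be the main obstacle'' and gesture at a ``natural handle'' via $\mathrm{Re}\,\tfrac{\zeta'}{\zeta}(\rho')=\mathrm{Im}\,\tfrac{\zeta'}{\zeta}(\rho')=0$ used ``against their $\gamma'$-shifts,'' which is too vague to constitute an argument. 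As it stands the proposal has a genuine gap exactly where the content of the theorem lies; the rest (the reduction to counts, the $\tfrac18$ from skipping every other element, Corollary of the Hadamard formula) is bookkeeping.

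The paper's treatment of the crux (Propositions~\ref{mainproposition} and~\ref{Proposition1}) is a concrete moment argument that your sketch does not reproduce. The two key inputs are: (i) Soundararajan's lemma (Lemma~\ref{sound}) shows the strip $\tfrac12<\sigma<\tfrac12+1/\log T$ between two consecutive $\zeta$-zeros contains at most one $\zeta'$-zero, so the bad $\rho'$ are automatically $\varepsilon^{1/2-\delta}/\log T$ well-spaced, which is exactly what makes the large-sieve bound (Lemma~\ref{main}) applicable; and (ii) Selberg's localized formula $\tfrac{\zeta'}{\zeta}(s)=\sum_{|s-\rho|<c/\log T}(s-\rho)^{-1}+O\bigl(\tfrac{\log T}{c}\mathcal{E}_{T,N}(s)\bigr)$ evaluated at $s=\rho'$, where the left side vanishes and (for bad $\rho'$) the local sum collapses to the single term $(\rho'-\rho_c)^{-1}$. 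This forces $|\rho'-\rho_c|^{-1}\ll\tfrac{\log T}{c}|\mathcal{E}_{T,N}(\rho')|$; combined with the complementary upper bound $|\rho'-\rho_c|\ll\sqrt{A\varepsilon\log(1/\varepsilon)}/\log T$ from Proposition~\ref{Proposition1} (proved from your Hadamard identity plus Dirichlet-polynomial moment bounds, not by qualitative reasoning), raising to the $2k$-th power and summing over the bad set yields $\varepsilon^{A+1/2}\le(CAk/\delta)^{2k}\varepsilon^{k\delta}$, a contradiction for $k\approx(A+1)/\delta$ once $\varepsilon<C(\delta,A)$. Your heuristic cannot manufacture this quantitative threshold; without the well-spacing observation and the high-moment estimate of the error $\mathcal{E}_{T,N}$ over the zeros $\rho'$, there is no way to bound the number of Ki-type configurations.
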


The approximate value of $C(\delta,A)$ is $(B \delta / A)^{32A/\delta}$ with $B$ an absolute constant. 
Theorem 1 follows from two technical Propositions which we now describe.
Given a zero $\rho' = \beta' + i\gamma'$ of $\zeta'(s)$ we denote by
$\rho_c = \tfrac 12 + i\gamma_c$ the zero of $\zeta(s)$ lying closest to
$\rho'$. If there are two choices of $\rho_c$ then we pick the one lying
closer to the origin. For any ordinate $\gamma$ of a zero of $\zeta(s)$
we denote by $\gamma^{+}$ the ordinate suceeding $\gamma$
and by $\gamma^{-}$ the ordinate preceeding $\gamma$. We denote by
$\gamma^{\pm}$ the ordinate closest to $\gamma$. 
Theorem 1 follows quickly from the following Proposition. 
\begin{proposition} \label{mainproposition}
Let $0 < \delta, \varepsilon < 1$.
Let $S_{\varepsilon,\delta}(T)$ 
be a set of zeros $\rho' = \beta' + i\gamma'$ of $\zeta'(s)$ 
such that $T \leq \gamma' \leq 2T$, 
$\beta' - \tfrac 12 \leq \varepsilon / \log T$ and
$$|\gamma_c - \gamma_c^{\pm}| > \varepsilon^{1/2 - \delta} / \log T$$
There is a $C = C(\delta, A)$ such that if $0 < \varepsilon < C$
then $|S_{\varepsilon,\delta}(T)| \leq \varepsilon^{A} \cdot T \log T$.
% 
%If $0 < \varepsilon < C(\delta)$, for $C(\delta) > 0$
%depending only on $\delta$, then $|S_\varepsilon(T)| \ll_{B} \varepsilon^{B} T %\log T$ for any given $B > 0$. 
\end{proposition}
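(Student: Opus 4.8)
The plan is to show that a zero $\rho'=\beta'+i\gamma'$ of $\zeta'(s)$ which is very close to the half-line, yet whose nearest zeta-zero $\rho_c=\tfrac12+i\gamma_c$ is \emph{isolated} (no other zero within $\varepsilon^{1/2-\delta}/\log T$), simply cannot occur — so $S_{\varepsilon,\delta}(T)$ is empty once $\varepsilon$ is small enough, which is far stronger than the claimed bound $\varepsilon^A T\log T$. The heuristic is that near an isolated zero $\tfrac12+i\gamma_c$ of $\zeta$, the function $\zeta(s)$ behaves like $c\,(s-\tfrac12-i\gamma_c)$ up to a slowly-varying factor, so $\zeta'(s)/\zeta(s)\approx 1/(s-\tfrac12-i\gamma_c)+O(\log T)$; a zero of $\zeta'$ where $\zeta\ne0$ is a zero of $\zeta'/\zeta$, and $1/(s-\tfrac12-i\gamma_c)$ has modulus $\asymp \log T/\varepsilon^{1/2-\delta}$ on the relevant region, which swamps the $O(\log T)$ error term. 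Hence $\zeta'(s)\ne0$ there, a contradiction.

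Concretely I would work with the Hadamard-type expansion
$$\frac{\zeta'}{\zeta}(s)=\sum_{\rho}\frac{1}{s-\rho}+O(\log|t|),$$
valid (on RH) for $s=\sigma+it$ with $\tfrac12\le\sigma\le 2$ and $|t|\asymp T$. Suppose $\rho'\in S_{\varepsilon,\delta}(T)$. Since $\beta'-\tfrac12\le\varepsilon/\log T$ and (by Speiser/RH) $\rho'$ lies to the right of the line, and since all zeros $\rho$ other than $\rho_c$ are at distance $>\varepsilon^{1/2-\delta}/\log T$ from $\gamma_c$, while $\rho'$ is within $O(\varepsilon/\log T)\le O(\varepsilon^{1/2-\delta}/\log T)$ of $\rho_c$, I split the sum over $\rho$ into the single term $1/(\rho'-\rho_c)$ and the tail. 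The tail $\sum_{\rho\ne\rho_c}1/(\rho'-\rho)$ I bound by the standard estimate $\sum_{\rho:\,|\gamma-t|\le 1}1/|s-\rho|\ll \log T \cdot \log(1/d)$ where $d$ is the distance to the nearest zero — here all those zeros are at distance $\gg\varepsilon^{1/2-\delta}/\log T$ from $\rho'$, and the far zeros contribute $O(\log T)$ by the usual pairing argument. So the tail plus the $O(\log|t|)$ error is $O(\log T\cdot\log(1/\varepsilon))$. Meanwhile $|1/(\rho'-\rho_c)|=1/|\beta'-\tfrac12|\ge \log T/\varepsilon \ge \log T/\varepsilon^{1/2-\delta}$ if $\gamma'=\gamma_c$, and more generally $|\rho'-\rho_c|\le\sqrt{2}\,\varepsilon/\log T$ gives $|1/(\rho'-\rho_c)|\gg \log T/\varepsilon$. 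Setting $\zeta'(\rho')=0$ forces $1/(\rho'-\rho_c)=-\sum_{\rho\ne\rho_c}1/(\rho'-\rho)+O(\log T)$, i.e. $\log T/\varepsilon\ll \log T\log(1/\varepsilon)$, which fails once $\varepsilon<C(\delta,A)$ — in fact once $1/\varepsilon> c\log(1/\varepsilon)$, so a fixed small absolute constant suffices and the stated $C(\delta,A)=(B\delta/A)^{32A/\delta}$ is more than enough.

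The main obstacle is making the tail estimate $\sum_{\rho\ne\rho_c}|\rho'-\rho|^{-1}=O(\log T\log(1/\varepsilon))$ genuinely uniform: one has to handle zeros with $|\gamma-\gamma_c|\le 1$ (finitely many near zeros, each at distance $\gg \varepsilon^{1/2-\delta}/\log T$, giving at most $O(\log T)$ of them at distance $\gg \varepsilon^{1/2-\delta}/\log T$, hence total $\ll (\log T)^2\varepsilon^{-(1/2-\delta)}$ — this is \emph{too big} and must instead be summed more carefully using $N(t+1)-N(t)\ll\log T$ together with dyadic distances, yielding $O(\log T\log(1/\varepsilon))$) and the zeros with $|\gamma-\gamma_c|>1$ (where one uses $\tfrac1{s-\rho}-\tfrac1{s_0-\rho}$ telescoping against the $\log|t|$ term in the usual way). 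A secondary subtlety is that I have been cavalier about whether $\rho_c$ could be a multiple zero or whether two zeros tie for closest; the hypothesis $|\gamma_c-\gamma_c^{\pm}|>\varepsilon^{1/2-\delta}/\log T$ rules both out for small $\varepsilon$, so $\rho_c$ is simple and the term $1/(\rho'-\rho_c)$ appears with multiplicity one. Once these two bookkeeping points are pinned down, the contradiction is immediate and in fact shows $S_{\varepsilon,\delta}(T)=\varnothing$ for $\varepsilon$ below an absolute constant, comfortably inside the claimed bound.
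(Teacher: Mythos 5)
Your proposal has a genuine gap, and in fact the stronger conclusion it claims ($S_{\varepsilon,\delta}(T)=\varnothing$ for small $\varepsilon$) is not supportable by the method, nor is it what the proposition asserts.

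The first problem is the asserted upper bound $|\rho'-\rho_c|\le\sqrt{2}\,\varepsilon/\log T$, which is not only unproved but contradicts what is known: Soundararajan's inequality (Lemma 11 of the paper) gives the \emph{lower} bound $|\rho'-\rho_c|\ge\sqrt{2(\beta'-\tfrac12)/\log T}$, which for $\beta'-\tfrac12$ near $\varepsilon/\log T$ is of size $\sqrt{\varepsilon}/\log T$, strictly larger than $\varepsilon/\log T$ for small $\varepsilon$. So $|1/(\rho'-\rho_c)|$ is at most of order $\log T/\sqrt{\varepsilon}$, not $\log T/\varepsilon$, and in fact you have no a priori upper bound on $|\rho'-\rho_c|$ at all. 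Proving such an upper bound, namely $|\rho'-\rho_c|\ll\sqrt{(\beta'-\tfrac12)/\log T}$ up to logarithms, is the entire content of the paper's Proposition 2 — and that bound is established only for \emph{all but a small fraction} of the zeros in a well-spaced family, precisely because it can fail pointwise.

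The second problem, which you partially acknowledge, is the tail $\sum_{\rho\neq\rho_c}1/|\rho'-\rho|$. Your proposed dyadic refinement does not deliver $O(\log T\,\log(1/\varepsilon))$. With only the zero-counting input $N(t+1)-N(t)\ll\log T$, nothing prevents all $\asymp\log T$ zeros in a unit window from clustering at distance $\sim\varepsilon^{1/2-\delta}/\log T$ from $\gamma_c$; the dyadic sum then genuinely produces $(\log T)^2/\varepsilon^{1/2-\delta}$, which swamps the $\log T/\sqrt{\varepsilon}$ main term once $\delta<1/2$. The paper circumvents this by not arguing pointwise at all: it controls the number of zeros in short windows $[\gamma'-\pi/\log N,\gamma'+\pi/\log N]$ via the explicit formula (Lemma 2), reduces this to mean values of the Dirichlet polynomials $A_N,B_N$ over the well-spaced family (Lemmas 3--5), and uses high moments to show the bad configurations you would need to rule out occur for at most $\kappa|\mathcal Z|$ zeros. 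This average-case control is exactly what makes the exponent $A$ appear in the conclusion $|S_{\varepsilon,\delta}(T)|\le\varepsilon^A T\log T$, and it is also why the claim $S_{\varepsilon,\delta}(T)=\varnothing$ cannot be reached by this route: clusters of $\zeta$-zeros near $\gamma_c$, which we cannot exclude, would place $\rho'$ in $S_{\varepsilon,\delta}(T)$ without violating anything you have used.
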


The proof of Proposition \ref{mainproposition} rests on a Proposition 
describing the structure the roots of $\zeta'(s)$ lying close to the
half-line. The Proposition which we are about to state complements with a
corresponding upper bound the classical lower bound,
$$
|\rho' - \rho_c| \geq \sqrt{\frac{2\big(\beta' - \tfrac 12\big)}{\log T}}
$$
valid for all $\rho' = \beta' + i\gamma'$ (see \cite{Soundararajan}).
% 
%with a corresponding upper bound.
It might be of independent interest.

\begin{proposition} \label{Proposition1}
Let $0 < \delta < 1$, $0 < \varepsilon < c$ with $c > 0$ an absolute
constant. 
Let $T$ be large and $\mathcal{Z} := \mathcal{Z}_{\varepsilon,\delta}(T)$ be a
set of $\delta/\log T$ well-spaced ordinates of zeros $\rho'$ such that
$\rho' \neq \rho$, 
$\beta' - \tfrac 12 \leq \varepsilon / \log T$ and $T \leq
\gamma' \leq 2T$.
If $|\mathcal{Z}_{\varepsilon,\delta}(T)| \gg \varepsilon^{A} \cdot T \log T$
then, for any given $\kappa > 0$, all but $\kappa |\mathcal{Z}|$ elements 
$\rho' \in \mathcal{Z}$ satisfy the inequality,
$$
\sqrt{\frac{\beta' - \tfrac 12}{\log T}} \ll
|\rho' - \rho_c | 
\ll \sqrt{A \log (\varepsilon \kappa \delta)^{-1}}
\cdot  \sqrt{\frac{\beta' - \tfrac 12}{\log T}}.
$$
\end{proposition}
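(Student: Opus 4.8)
The plan is to start from the explicit formula for $\zeta'/\zeta$ in terms of zeros. Writing $\zeta'(s)/\zeta(s) = \sum_\rho 1/(s-\rho) + O(\log T)$ for $s$ near height $T$, a zero $\rho' = \beta' + i\gamma'$ of $\zeta'$ satisfies
\[
\sum_{\rho} \frac{1}{\rho' - \rho} = O(\log T).
\]
Isolating the nearest zero $\rho_c$ gives $\tfrac{1}{\rho'-\rho_c} = -\sum_{\rho \neq \rho_c} \tfrac{1}{\rho'-\rho} + O(\log T)$, so $|\rho' - \rho_c|^{-1} \le |\sum_{\rho\neq\rho_c}(\rho'-\rho)^{-1}| + O(\log T)$. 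The lower bound $|\rho'-\rho_c| \gg \sqrt{(\beta'-\tfrac12)/\log T}$ is already quoted (it comes from the same identity by taking real parts: $\mathrm{Re}\,\tfrac{1}{\rho'-\rho_c} = \tfrac{\beta'-\tfrac12}{|\rho'-\rho_c|^2} \le \sum_{\rho\neq\rho_c}\mathrm{Re}\,\tfrac1{\rho'-\rho} + O(\log T)$, and on RH every term $\mathrm{Re}\,\tfrac1{\rho'-\rho} \ge 0$, bounding the tail sum by $O(\log T)$ again via the explicit formula applied at $\tfrac12 + i\gamma'$). So the whole content is the \emph{upper} bound on $|\rho'-\rho_c|$, equivalently a \emph{lower} bound on $|\sum_{\rho\neq\rho_c}(\rho'-\rho)^{-1}|$ that holds for all but $\kappa|\mathcal{Z}|$ of the $\rho' \in \mathcal{Z}$.

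The key step is an averaging argument over $\mathcal{Z}$. The idea: if $|\rho'-\rho_c|$ were much larger than $\sqrt{(\beta'-\tfrac12)/\log T}$ for a positive proportion $\kappa$ of the $\rho' \in \mathcal{Z}$, then for those $\rho'$ the nearest-zero term $\tfrac{1}{\rho'-\rho_c}$ is small, and since $\zeta'(\rho')=0$ forces $\sum_\rho (\rho'-\rho)^{-1}$ to be small too, the \emph{tail} $\sum_{\rho\neq\rho_c}(\rho'-\rho)^{-1}$ must itself be small — in fact $o(\log T/\sqrt{\text{something}})$ in a way that is incompatible with having $|\mathcal{Z}| \gg \varepsilon^A T\log T$ many such zeros all with $\beta'-\tfrac12 \le \varepsilon/\log T$ and $\tfrac\delta{\log T}$-well-spaced. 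Concretely I would bound $\sum_{\rho'\in\mathcal{Z}}\,\mathrm{Re}\,\tfrac{1}{\rho'-\rho_c}$ from below: each term is $\tfrac{\beta'-\tfrac12}{|\rho'-\rho_c|^2}$, so if $|\rho'-\rho_c| \le M\sqrt{(\beta'-\tfrac12)/\log T}$ for a bad $\rho'$ then the term is $\ge \log T / M^2$. Simultaneously I would bound $\sum_{\rho'\in\mathcal{Z}}\,\mathrm{Re}\,\tfrac{1}{\rho'-\rho_c}$ from \emph{above} by relating it through the $\zeta'=0$ identity to $\sum_{\rho'\in\mathcal{Z}}\sum_{\rho\neq\rho_c}\mathrm{Re}\,\tfrac1{\rho'-\rho}$, which — because the $\gamma'$ are well-spaced by $\delta/\log T$ and confined to $[T,2T]$ — is at most $O(|\mathcal{Z}|\cdot \log T \cdot \log(1/(\varepsilon\kappa\delta)))$ after splitting the $\rho$-sum into a near range (controlled by well-spacing plus the smallness of $\beta'-\tfrac12$) and a far range (controlled by the classical density estimate $\sum_{|\gamma-t|\le 1}1 \ll \log T$). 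Comparing: the good proportion $1-\kappa$ contributes $\gg (1-\kappa)|\mathcal{Z}|\log T$ (since good $\rho'$ have $|\rho'-\rho_c| \asymp \sqrt{(\beta'-\tfrac12)/\log T}$, the lower bound is tight there), while the bad proportion, under the hypothesized upper cap $M = c\sqrt{A\log(\varepsilon\kappa\delta)^{-1}}$ failing, would force a term-by-term contradiction. The cleanest route is: assume $>\kappa|\mathcal{Z}|$ zeros violate the stated upper bound with an as-yet-undetermined constant $M$; these contribute $< \kappa|\mathcal{Z}|\log T/M^2$ to the lower-bound side from below but we actually want them to contribute a \emph{lot} — so instead run it as: the left side is $\le C|\mathcal{Z}|\log T\log(\varepsilon\kappa\delta)^{-1}$ from the upper estimate, the left side is $\ge$ (contribution of the $\ge \kappa|\mathcal{Z}|$ bad zeros, but those have \emph{small} $\mathrm{Re}$ terms, so this doesn't directly bite) — hence I must instead bound from below using the \emph{full} sum and subtract off the bad part. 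Let me restate: for every $\rho'$, $\mathrm{Re}\,\tfrac{1}{\rho'-\rho_c} \ge \log T/M(\rho')^2$ where $M(\rho')$ is the ratio; summing, $\sum_{\rho'}\log T/M(\rho')^2 \le C|\mathcal{Z}|\log T\log(\varepsilon\kappa\delta)^{-1}$, so $\sum_{\rho'} M(\rho')^{-2} \le C|\mathcal{Z}|\log(\varepsilon\kappa\delta)^{-1}$, i.e. the average of $M(\rho')^{-2}$ is $\le C\log(\varepsilon\kappa\delta)^{-1}$ — but that's an average of small quantities, useless. The correct inequality must therefore come from a lower bound on $|\rho'-\rho_c|^{-1}$ (not its real part) combined with a Cauchy–Schwarz over $\mathcal{Z}$ of $\sum_{\rho'}|\rho'-\rho_c|^{-2}$ against the mollified second moment, and that is the technical heart.

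So the structure I would follow is: (i) write the identity $\sum_\rho(\rho'-\rho)^{-1}=O(\log T)$ at each $\rho'\in\mathcal{Z}$; (ii) obtain the pointwise lower bound for $|\rho'-\rho_c|$ (quoted, via real parts and RH-positivity); (iii) for the upper bound, consider $\Sigma := \sum_{\rho'\in\mathcal{Z}}\big|\sum_{\rho\neq\rho_c}(\rho'-\rho)^{-1}\big|^2$; since $|\rho'-\rho_c|^{-1} \le \big|\sum_{\rho\neq\rho_c}(\rho'-\rho)^{-1}\big| + O(\log T)$, controlling $\Sigma$ controls $\sum_{\rho'}|\rho'-\rho_c|^{-2}$ and hence, by Markov's inequality, the number of $\rho'$ with $|\rho'-\rho_c|$ small, i.e. with $M(\rho')$ large; (iv) expand $\Sigma$ as a double sum over $\rho,\tilde\rho$ and estimate the diagonal using well-spacing (the $\delta/\log T$ separation keeps $|\rho'-\rho| \gg \delta/\log T$ except for the $\le 2$ neighbors of $\gamma_c$, whose contribution is where the $\beta'-\tfrac12 \le \varepsilon/\log T$ hypothesis enters to keep $|\rho'-\rho|$ from being too small) and the off-diagonal by standard zero-counting; (v) combine to get $\Sigma \ll |\mathcal{Z}|\cdot(\log T)^2\log(\varepsilon\kappa\delta)^{-1}/(\beta'-\tfrac12)\cdot\log T$ — tracking the $(\beta'-\tfrac12)$ dependence carefully so that Markov yields: the number of $\rho'$ with $|\rho'-\rho_c| > M\sqrt{(\beta'-\tfrac12)/\log T}$ is $\ll |\mathcal{Z}|\log(\varepsilon\kappa\delta)^{-1}/M^2 \le \kappa|\mathcal{Z}|$ once $M^2 \gg \log(\varepsilon\kappa\delta)^{-1}/\kappa$, which after absorbing the $\kappa$ and (if needed) the factor $A$ from the hypothesis $|\mathcal{Z}|\gg\varepsilon^A T\log T$ gives exactly $M \ll \sqrt{A\log(\varepsilon\kappa\delta)^{-1}}$.

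The main obstacle will be step (iv)–(v): getting the second-moment bound $\Sigma$ with the \emph{sharp} dependence on $\beta'-\tfrac12$. The naive bound $\big|\sum_{\rho\neq\rho_c}(\rho'-\rho)^{-1}\big| \ll \log T$ only recovers the trivial $|\rho'-\rho_c| \gg 1/\log T$, which is weaker than what we need by a factor $\sqrt{\varepsilon}$; the gain must come from the cancellation in the $\rho$-sum forced by the vanishing $\zeta'(\rho')=0$ together with the near-vanishing of $\zeta(\rho')$ (since $\rho'$ is within $\varepsilon/\log T$ of the half-line and $\rho_c$ is close). Making this precise means using the Hadamard-type expansion $\log\zeta(s)$ or the relation $\zeta'/\zeta(\rho') = 0$ at a point very near a zero of $\zeta$, Taylor-expanding $\zeta$ around $\rho_c$: $0 = \zeta'(\rho')/\zeta(\rho')$ and $\zeta(\rho') \approx \zeta'(\rho_c)(\rho'-\rho_c)$, linking $|\rho'-\rho_c|$ to $|\zeta''(\rho_c)/\zeta'(\rho_c)|$ and the tail sum. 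The well-spacing hypothesis and the restriction to $\mathcal{Z}$ with $|\mathcal{Z}|\gg\varepsilon^A T\log T$ are exactly what let one discard an exceptional set where $\zeta'(\rho_c)$ is abnormally small or where the tail sum is abnormally large, at the cost of the $\kappa|\mathcal{Z}|$ allowance. I expect the bookkeeping of these exceptional sets — and ensuring their total size stays below $\kappa|\mathcal{Z}|$ uniformly in the relevant ranges of $\varepsilon,\delta,\kappa$ — to be the most delicate part of the argument.
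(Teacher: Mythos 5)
There is a genuine gap: the logical direction of your Markov step is reversed, and you never land on the key mechanism that actually makes the upper bound go.

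You correctly identify that the lower bound $|\rho'-\rho_c| \gg \sqrt{(\beta'-\tfrac12)/\log T}$ is classical and that the whole content is the upper bound. But your step (iii)--(v) cannot deliver it. You set $\Sigma := \sum_{\rho'\in\mathcal{Z}}\big|\sum_{\rho\neq\rho_c}(\rho'-\rho)^{-1}\big|^2$ and use $|\rho'-\rho_c|^{-1} \leq \big|\sum_{\rho\neq\rho_c}(\rho'-\rho)^{-1}\big| + O(\log T)$ to control $\sum_{\rho'}|\rho'-\rho_c|^{-2}$ from above. But an upper bound on $\sum_{\rho'}|\rho'-\rho_c|^{-2}$ only controls the number of $\rho'$ for which $|\rho'-\rho_c|$ is \emph{small}; it says nothing about how many have $|\rho'-\rho_c|$ large, which is what the upper bound in the Proposition requires. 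You would need a \emph{lower} bound on the tail sum to force $|\rho'-\rho_c|^{-1}$ large, and a second-moment upper bound cannot supply that --- you concede as much in the middle of the proposal (``but that's an average of small quantities, useless'') but then do not resolve it. The final sentence of step (iii), ``the number of $\rho'$ with $|\rho'-\rho_c|$ small, i.e.\ with $M(\rho')$ large,'' also conflates small with large: if $M(\rho') = |\rho'-\rho_c|/\sqrt{(\beta'-\tfrac12)/\log T}$ as later used, then $|\rho'-\rho_c|$ small means $M$ small.

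The missing idea is to replace the weak relation $\sum_\rho(\rho'-\rho)^{-1} = O(\log T)$ by the exact Hadamard-type identity with \emph{positive} terms,
\[
\tfrac12\log\gamma' \;=\; \sum_\rho \frac{\beta'-\tfrac12}{(\beta'-\tfrac12)^2+(\gamma'-\gamma)^2} + O(1),
\]
and then bound the \emph{tail} (the sum over $\rho\neq\rho_c$, suitably split into a near window $|\gamma-\gamma'|<\pi/\log N$ and a far range) \emph{from above pointwise} for all but $\kappa|\mathcal{Z}|$ of the $\rho'$. Positivity then forces the remaining term $\frac{\beta'-\tfrac12}{|\rho'-\rho_c|^2}$ to be $\gg \log T/k$, which gives the desired upper bound on $|\rho'-\rho_c|$. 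The pointwise control of the tail outside an exceptional set is achieved by expressing the near count and far contribution in terms of two short Dirichlet polynomials ($A_N$ via Selberg's inequality, $B_N$ via the explicit-formula zero count in a short window), taking a $2k$-th moment with $N^k \leq T/\log T$ via a large-sieve-type majorant argument, and applying Chebyshev; the hypothesis $\beta'-\tfrac12 \leq \varepsilon/\log T$ is used precisely to make the far contribution negligible ($\ll \varepsilon\log T$). The constant $\sqrt{A\log(\varepsilon\kappa\delta)^{-1}}$ then arises transparently from the choice $k \asymp A\log(\varepsilon\kappa\delta)^{-1}$ dictated by making the exceptional set of size $\leq \kappa|\mathcal{Z}| \geq \kappa\varepsilon^A T\log T$. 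Your Taylor-expansion alternative in the last paragraph ($\zeta(\rho')\approx \zeta'(\rho_c)(\rho'-\rho_c)$, estimating $\zeta''/\zeta'$) is not developed and is not the route taken; it also runs into the same problem of controlling $\zeta'(\rho_c)$ from below for most $\rho_c$, which reintroduces the difficulties you are trying to avoid.
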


The proof of the converse part of our Main Theorem builds on ideas of Zhang,
and follows from the following more precise statement valid for any
fixed $\varepsilon > 0$. 
\begin{theorem}
Let $A, \delta > 0$.
There is a $C  = C(\delta, A)$ such that if
$0 < \varepsilon < C$ and $m(\varepsilon^{1/2}) \geq c \varepsilon^{A}$
then $m'(\varepsilon) \geq (c/4) \varepsilon^{A + \delta}$. 
\end{theorem}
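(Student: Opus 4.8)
The plan is to exploit, following Zhang, that a short gap $\gamma^+-\gamma$ between consecutive zeros of $\zeta$ forces a zero of $\zeta'$ to lie very near the critical line, essentially at the midpoint of the gap, and to run this construction quantitatively and injectively. First I would unwind the hypothesis: $m(\varepsilon^{1/2}) \ge c\varepsilon^A$ says that for all large $T$ the set $\mathcal{G}(T) := \{\gamma : T \le \gamma \le 2T,\ \gamma^+-\gamma \le \varepsilon^{1/2}/\log T\}$ satisfies $|\mathcal{G}(T)| \ge (c/2\pi - o(1))\,\varepsilon^A\,T\log T$. The goal is a bounded-to-one assignment $\gamma \mapsto \rho' = \beta' + i\gamma'$ with $T \le \gamma' \le 2T$ and $\beta' - \tfrac12 \le \varepsilon/\log T$, defined on all but a small fraction of $\mathcal{G}(T)$; this yields $m'(\varepsilon) \gg \varepsilon^A \ge (c/4)\varepsilon^{A+\delta}$ for $\varepsilon \le 1$, the exponent $\delta$ and the threshold $C(\delta,A)$ leaving room for the error terms below.

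For the local step, fix $\gamma \in \mathcal{G}(T)$ and set $\rho_1 = \tfrac12 + i\gamma$, $\rho_2 = \tfrac12 + i\gamma^+$, $g = \gamma^+-\gamma$, $\tau = \tfrac12(\gamma+\gamma^+)$, $m = \tfrac12 + i\tau$. Away from zeros of $\zeta$, a zero of $\zeta'$ is a zero of $\zeta'/\zeta$; writing $\frac{\zeta'}{\zeta}(s) = \frac{1}{s-\rho_1} + \frac{1}{s-\rho_2} + R(s)$ and $s = m+w$ turns $\zeta'(s)=0$ into $2w = -\bigl(w^2 + \tfrac{g^2}{4}\bigr)R(m+w)$. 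The crucial point is that at $s=m$ one has $\tfrac{1}{m-\rho_1}+\tfrac{1}{m-\rho_2}=0$, so $R(m) = \tfrac{\zeta'}{\zeta}(m)$; and since $\zeta(\tfrac12+it) = Z(t)e^{-i\theta(t)}$ with $Z$ real gives $\frac{\zeta'}{\zeta}(\tfrac12+it) = -\theta'(t) - i\frac{Z'}{Z}(t)$ wherever $\zeta(\tfrac12+it)\neq0$, we get $\operatorname{Re}R(m) = -\theta'(\tau) = -\tfrac12\log\tfrac{\tau}{2\pi} + O(\tau^{-2}) \in (-\tfrac12\log T,\,0)$, while $|R(m)| \ll \log T$ as long as no other zero of $\zeta$ lies within $O(1/\log T)$ of $m$. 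A Rouché comparison of $\frac{\zeta'}{\zeta}$ with its two polar terms on the circle $|w| = r$, $r \asymp g^2\log T$ — which sits well inside the disc $|w| < g/2$, free of zeros of $\zeta$, because $g\log T \le \varepsilon^{1/2}$ is small — produces a single zero $\rho'$ of $\zeta'$ there, located (by iterating the fixed-point equation) at $w = -\tfrac{g^2}{8}R(m)(1+O(\varepsilon))$. Hence
\[
\beta' - \tfrac12 = \operatorname{Re}w = \tfrac{g^2}{8}\theta'(\tau) + O(\varepsilon g^2\log T) \le \tfrac{g^2}{16}\log T + O(\varepsilon g^2\log T) \le \frac{\varepsilon}{\log T},
\]
while $|\gamma' - \tau| = |\operatorname{Im}w| \ll g^2\log T < g/2$, so $\gamma' \in (\gamma,\gamma^+)$. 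The favorable constant $\tfrac1{16}$ — coming from the exact value $-\theta'(\tau)$ of $\operatorname{Re}R(m)$ rather than a crude $O(\log T)$ bound — is essential, since monotonicity of $m'$ affords no slack in the width of the window.

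Because the intervals $(\gamma,\gamma^+)$, $\gamma \in \mathcal{G}(T)$, are pairwise disjoint and each $\gamma'$ lies in its own one, the map $\gamma \mapsto \rho'$ is injective on the subset $\mathcal{G}_0(T) \subseteq \mathcal{G}(T)$ of $\gamma$ for which the construction is valid, giving $\#\{\rho' : T\le\gamma'\le 2T,\ \beta'-\tfrac12 \le \varepsilon/\log T\} \ge |\mathcal{G}_0(T)|$. The hard part is that $\mathcal{G}(T)\setminus\mathcal{G}_0(T)$ need not be negligible: if a flanking gap $\gamma - \gamma^-$ is itself $\ll 1/\log T$, the term $1/(m-\rho_1^{-})$ of $R(m)$ has size $\asymp 1/(\gamma-\gamma^-)\gg\log T$, so the estimate for $\operatorname{Im}w$ collapses and $\gamma'$ may escape $(\gamma,\gamma^+)$; in a $\liminf$ regime one cannot rule out that most $\gamma\in\mathcal G(T)$ are ``clustered'' in this way. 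I would dispose of a maximal block of $k$ consecutive short gaps — spanning zeros $\rho_0,\dots,\rho_k$ of $\zeta$ — at one stroke, applying the argument principle to $(\log\tfrac{\zeta'}{\zeta})' = \tfrac{\zeta''}{\zeta'} - \tfrac{\zeta'}{\zeta}$ around a thin box of width $\varepsilon/\log T$ and height comparable to the block's length, chosen (by RH) to contain no zero of $\zeta$ and hence only zeros of $\zeta'$. The $k+1$ sign changes of $Z$ on the segment of the critical line through the block make $\tfrac{\zeta'}{\zeta}$ wind roughly $2\pi(k+1)$ times along the slightly shifted left edge, against an $O(k)+O(\log T)$ contribution from the other three edges, so the box contains $\ge ck$ zeros of $\zeta'$, all automatically with $\beta'-\tfrac12 < \varepsilon/\log T$; and, as in the isolated case $k=1$, the negativity $\operatorname{Re}\tfrac{\zeta'}{\zeta}(\tfrac12+it) = -\theta'(t) < 0$ on the line is what pins these zeros just off it, at horizontal distance $\ll (\text{local gap})^2\log T \le \varepsilon/\log T$. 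Summing over blocks recovers $\ge c\,|\mathcal{G}(T)|$ zeros of $\zeta'$, while boundary effects near $T$, $2T$ and any multiple zeros cost only $O(\log T)$; this gives $m'(\varepsilon) \ge (c/4)\varepsilon^{A+\delta}$ once $\varepsilon < C(\delta,A)$.
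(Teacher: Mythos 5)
Your proposal correctly identifies the central difficulty — the clustering problem, where a block of several consecutive short gaps breaks the naive injectivity of $\gamma \mapsto \rho'$ — and the isolated-gap local analysis you sketch is essentially a quantitative re-derivation of Zhang's Lemma (which the paper cites as Lemma~10). But your proposed resolution of the clustering problem is where the argument has a genuine gap.

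You propose to dispose of a block of $k$ consecutive short gaps at once by applying the argument principle to $\zeta'$ on a thin box of width $\varepsilon/\log T$. Two of the claims in that paragraph are not justified and I do not believe either follows from what you have written. First, the winding count: you assert that the $k+1$ sign changes of $Z$ make $\zeta'/\zeta$ wind roughly $2\pi(k+1)$ times along the shifted left edge while the other three edges contribute only $O(k)+O(\log T)$, yielding $\ge ck$ zeros. But a lower bound of the form $ck$ with $c$ bounded away from zero does not follow from ``roughly $2\pi(k+1)$'' against ``$O(k)$'' — the implicit constants could easily cancel, and nothing in the sketch controls the sign or size of the competing $O(k)$ term. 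Second, and more seriously, the horizontal confinement: you claim the zeros so produced all satisfy $\beta' - \tfrac12 \ll (\text{local gap})^2\log T$ ``as in the isolated case $k=1$,'' but you yourself noted two sentences earlier that the isolated-case fixed-point iteration \emph{breaks} in the clustered regime because the flanking-zero contribution to $R(m)$ is $\gg 1/g \gg \log T$. So the mechanism you cite for confinement is exactly the one you just showed does not apply. (What does salvage the confinement, had you invoked it, is Soundararajan's inequality $|\rho'-\rho_c|^2 \ge 2(\beta'-\tfrac12)/\log\gamma'$, which the paper uses as Lemma~11: if a zero of $\zeta'$ sits between two zeros of $\zeta$ a distance $\le \varepsilon^{1/2}/\log T$ apart and also satisfies $\beta'-\tfrac12 < 1/\log T$, that inequality forces $\beta'-\tfrac12 \ll \varepsilon/\log T$. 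But you would still need the $\Omega(k)$ count, which remains unproved.)

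The paper avoids the block-by-block argument entirely. It grants that $\gamma\mapsto\rho'$ may be many-to-one and instead bounds the multiplicity: Corollary~3, proved via $2k$-th moments of the local zero-counting function $N(\gamma + 2\pi/\log T) - N(\gamma - 2\pi/\log T)$ over the ordinates $\gamma$ (Lemmas~8--9, using Gonek's lemma to control off-diagonal terms), shows that after discarding a set of size $\le \varepsilon^{A+1}T\log T$, every remaining $\gamma$ has at most $\varepsilon^{-\delta}$ neighbors within $2\pi/\log T$. Then Zhang (Lemma~10) plus Soundararajan (Lemma~11) give the $\rho'$, and the map $\gamma\mapsto\rho'$ is at most $\varepsilon^{-\delta}$-to-one on the good set, which is exactly where the $\varepsilon^{A+\delta}$ in the conclusion comes from. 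Your route, if completed, would remove that $\varepsilon^\delta$ loss — a stronger conclusion — which is itself a hint that the block argument is doing something harder than is being acknowledged.
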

The paper is organized as follows. Most of the paper, all the way
until section 7,
is devoted to the proof of the propositions above
and the deduction of Theorem 1 from them.
Following section 7 we prove Theorem 2 and Corollary 1.
%Our main result follows immediately from the two stronger
%Theorem 1 and Theorem 2.

\section{Lemma on Dirichlet polynomials}

Define,
$$
A_N(s) := \sum_{n \leq N} \frac{\Lambda(n)W_N(n)}{n^s}
$$
with %the weight $W_N(n)$ defined as follows,
$$ W_N (n) = \begin{cases}
  1 & \text{ for } 1 \leq n \leq N^{1/2} \\
  \log (N / n ) / \log N & \text{ for } N^{1/2} < n \leq N
\end{cases} 
$$
%
%     1 & \text{ for } 1 \leqslant n \leqslant N^{1/3}\\
%     (9 \log^2 (N / n) - 4 \log^2 (N / n)) / (2 \log^2 N) & \text{ for } N^{1/3}
%     \leqslant n \leqslant N^{2/3}\\
%     9 \log^2 (N / n) / (2 \log^2 N) &  \text{ for } x^2 \leqslant n \leqslant
%     N .
%   \end{cases} . \]
The lemma below is due to Selberg.
\begin{lemma}
  \label{selberg}
  Let $\sigma = \tfrac 12 + 2 / \log N$, with $N \leqslant T$.
  Then for $T \leqslant t \leqslant 2 T$,
  \[ \sum_{\rho} \frac{\sigma - \tfrac{1}{2}}{(\sigma - \tfrac{1}{2})^2 + (t -
     \gamma)^2} \ll |A_N(s)| +
     \log T \]
\end{lemma}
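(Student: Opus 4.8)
The plan is to combine the classical partial-fraction expansion of $\zeta'/\zeta$ with Selberg's approximation of $\zeta'/\zeta$ by the Dirichlet polynomial $A_N$. Write $s = \sigma + \mathi t$. First I would recall that, as a consequence of the Hadamard product for $\zeta$ together with the identity $\operatorname{Re}B = -\sum_\rho \operatorname{Re}(1/\rho)$ for the constant $B$ in that product, one has
\[
\operatorname{Re}\frac{\zeta'}{\zeta}(s) = \sum_\rho \operatorname{Re}\frac{1}{s-\rho} + O(\log T) = \sum_\rho \frac{\sigma - \tfrac12}{(\sigma - \tfrac12)^2 + (t-\gamma)^2} + O(\log T),
\]
the last step using $\beta = \tfrac12$ under the Riemann Hypothesis. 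Since the sum on the right consists of nonnegative terms, it therefore suffices to establish the \emph{upper} bound $\operatorname{Re}(\zeta'/\zeta)(s) \ll |A_N(s)| + \log T$.

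For this I would invoke Selberg's formula: applying Mellin inversion with the kernel attached to the weight $W_N$ (essentially $(x^w - x^{2w})/w^2$ with $x = N^{1/2}$) to the function $-(\zeta'/\zeta)(s+w)$ and shifting the line of integration far to the left, past all the non-trivial zeros, one obtains an identity of the shape
\[
-\frac{\zeta'}{\zeta}(s) = A_N(s) - \frac{1}{\log x}\sum_\rho \frac{x^{\rho-s} - x^{2(\rho-s)}}{(s-\rho)^2} + O(1),
\]
where the series over $\rho$ converges absolutely thanks to the $(s-\rho)^{-2}$ weights, and the $O(1)$ absorbs the negligible contributions of the pole of $\zeta$ at $1$, of the trivial zeros, and of the shifted integral. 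The essential point of the trapezoidal weight $W_N$ is that it collapses the double pole of the kernel at $w = 0$ to a simple pole, so that the residue there is precisely $A_N(s)$ with no $(\zeta'/\zeta)'$ term surviving.

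It remains to absorb the zero-sum. Because $\sigma - \tfrac12 = 2/\log N$ and $\beta = \tfrac12$, the factors $|x^{\rho-s}|$ and $|x^{2(\rho-s)}|$ equal the fixed constants $e^{-1}$ and $e^{-2}$, while $1/\log x = \sigma - \tfrac12$; hence
\[
\frac{1}{\log x}\Bigl|\sum_\rho \frac{x^{\rho-s} - x^{2(\rho-s)}}{(s-\rho)^2}\Bigr| \le (e^{-1} + e^{-2}) \sum_\rho \frac{\sigma - \tfrac12}{(\sigma - \tfrac12)^2 + (t-\gamma)^2}.
\]
Taking real parts in Selberg's formula gives $\operatorname{Re}(\zeta'/\zeta)(s) \le |A_N(s)| + (e^{-1}+e^{-2})\bigl(\operatorname{Re}(\zeta'/\zeta)(s) + O(\log T)\bigr) + O(1)$, using the previous two displays; since $e^{-1} + e^{-2} < 1$ this rearranges to $(1 - e^{-1} - e^{-2})\operatorname{Re}(\zeta'/\zeta)(s) \ll |A_N(s)| + \log T$, which is the bound we wanted and hence proves the Lemma.

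The main obstacle is the honest justification of Selberg's formula: one must push the contour past $\operatorname{Re}(w) = \tfrac12 - \sigma$ (choosing the horizontal truncations so as to miss the zeros) and estimate the resulting far-left integral, using the $w^{-2}$ decay of the kernel together with the functional equation to control $\zeta'/\zeta$ there, and then verify that the residue bookkeeping reproduces exactly the weight $W_N$ with all discarded terms genuinely $O(1)$. Everything after that is elementary; in particular the inequality $e^{-1} + e^{-2} < 1$ has ample slack, so even if the precise normalization of $W_N$ introduces a bounded constant into the zero-sum the absorption step still goes through.
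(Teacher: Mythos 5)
Your argument correctly reconstructs the classical proof of Selberg that the paper itself does not reproduce but simply cites (the paper's entire proof is ``This is equation~(2.2) in [Selberg2]''): the reduction to an upper bound on $\operatorname{Re}(\zeta'/\zeta)(s)$ via the Hadamard partial-fraction expansion, Selberg's kernel identity for $A_N$, and the self-improving absorption of the zero-sum using $e^{-1}+e^{-2}<1$ (exactly what the calibration $\sigma-\tfrac12=2/\log N$ is designed to achieve) are precisely Selberg's steps, so this is the same approach, just spelled out. One small point worth checking: the kernel should be $(x^{2w}-x^{w})/(w^{2}\log x)$ (your displayed $x^{w}-x^{2w}$ has the wrong sign and would produce $-W_N$), and with $x=N^{1/2}$ it yields the weight $\log(N/n)/\log\sqrt{N}=2\log(N/n)/\log N$ on $(\sqrt{N},N]$, i.e.\ twice the paper's stated $W_N$ on that range and continuous at $n=\sqrt{N}$, unlike the paper's; this factor-of-two discrepancy is almost certainly a typo in the paper's definition of $W_N$, and --- contrary to your closing remark --- a change of normalization alters the Dirichlet polynomial $A_N$ itself rather than the zero-sum, though it is harmless for all downstream uses of the lemma.
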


\begin{proof}
This is equation $(2.2)$ in \cite{Selberg2}.
%  See Selberg {\cite{Selberg}} (see equation (9)).
\end{proof}
Using the explicit formula we obtain an upper
bound for the number of zeros in a small window $[t-2\pi K/\log t; t + 2\pi K
/\log t]$, in terms of the Dirichlet polynomial
$$
B_N(s) := \sum_{n \leq N} \frac{\Lambda(n)}{n^s} \cdot \left ( 1 - \frac{\log n}{\log N} \right ).
$$
We have the following lemma.
\begin{lemma}\label{explicit}
For $T \leq t \leq 2T$
and $N \leq T$, 
$$
N \big (t + \frac{\pi}{\log N} \big ) - N \big
(t - \frac{\pi}{\log N} \big)
\ll \frac{\log T}{\log N} + \frac{|B_N(\tfrac 12 + it)|}{\log N}
$$
\end{lemma}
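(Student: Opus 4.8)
The plan is to start from the Riemann–von Mangoldt explicit formula for $N(t)$, which expresses the zero-counting function as $N(t) = \tfrac{1}{\pi}\vartheta(t) + 1 + \tfrac{1}{\pi}S(t)$ with $S(t) = \tfrac{1}{\pi}\arg\zeta(\tfrac12+it)$, and to difference it across the short interval of length $2\pi/\log N$ centered at $t$. The smooth part contributes $\tfrac{1}{\pi}\bigl(\vartheta(t+\tfrac{\pi}{\log N}) - \vartheta(t-\tfrac{\pi}{\log N})\bigr) \ll \log T / \log N$ by the mean value theorem together with $\vartheta'(u) = \tfrac12\log(u/2\pi) + O(1/u)$. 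So everything reduces to bounding the increment of $S(t)$, i.e. to showing
$$
S\bigl(t + \tfrac{\pi}{\log N}\bigr) - S\bigl(t - \tfrac{\pi}{\log N}\bigr) \ll \frac{\log T}{\log N} + \frac{|B_N(\tfrac12+it)|}{\log N}.
$$

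For this I would invoke the standard approximation to $S(t)$ in terms of a Dirichlet polynomial over prime powers, going back to Selberg: on RH (which is in force throughout), for $\sigma_0 = \tfrac12 + 1/\log N$ one has $S(t) = -\tfrac{1}{\pi}\,\mathrm{Im}\sum_{n\le N}\tfrac{\Lambda(n)}{n^{\sigma_0+it}\log n}\cdot\bigl(1 - \tfrac{\log n}{\log N}\bigr) + \text{small}$, or alternatively work directly with $\arg\zeta$ via the Hadamard product. Rather than chase the pointwise formula for $S$, the cleaner route is: write the difference of $S$-values as an integral of $\mathrm{Re}\,\tfrac{\zeta'}{\zeta}$ along a short horizontal segment, use the partial-fraction / explicit formula for $\zeta'/\zeta$ to replace it by the sum over zeros $\sum_\rho \bigl(\ldots\bigr)$ plus the Dirichlet polynomial $B_N$, and recognize that the difference of $S$ across an interval of length $\asymp 1/\log N$ counts (with a smooth weight) exactly the zeros in the slightly enlarged window. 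The positivity of the kernel in Selberg's Lemma \ref{selberg} lets me bound that count from above by $|A_N(\tfrac12+it)| + \log T$; since $A_N$ and $B_N$ differ only by the harmless factor $1/\log n$ on the weight (and are of the same size up to $O(\log T)$), this yields the claimed bound with $B_N$, after dividing through by $\log N$ to normalize the window length.

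Concretely, the steps in order: (1) write $N(t_2) - N(t_1) = \tfrac1\pi(\vartheta(t_2)-\vartheta(t_1)) + \tfrac1\pi(S(t_2)-S(t_1))$ with $t_{1,2} = t \mp \pi/\log N$; (2) bound the $\vartheta$-difference by $\ll \log T/\log N$; (3) express $S(t_2)-S(t_1)$ via $\int \mathrm{Re}\,\tfrac{\zeta'}{\zeta}(\sigma+it)\,d\sigma$-type manipulation, or directly apply Selberg's mollified formula for $S$ to get $S(t_2)-S(t_1) \ll (1/\log N)\sum_\rho \tfrac{\sigma_0-1/2}{(\sigma_0-1/2)^2+(t-\gamma)^2} + \ldots$ — more precisely, bound the number of zeros in $[t_1-\tfrac{\pi}{\log N}, t_2+\tfrac{\pi}{\log N}]$ by $\tfrac{2\pi/\log N}{\sigma_0-1/2}\sum_\rho \tfrac{\sigma_0-1/2}{(\sigma_0-1/2)^2+(t-\gamma)^2} \ll (1/\log N)\sum_\rho(\ldots)$; (4) apply Lemma \ref{selberg} with $\sigma = \sigma_0 = \tfrac12 + 2/\log N$ to replace the zero-sum by $|A_N| + \log T$, giving $\ll (|A_N| + \log T)/\log N$; (5) compare $A_N$ with $B_N$: they have the same support and the same crude size since $1/\log n \le 1$ and the truncation weights $W_N$ and $(1-\log n/\log N)$ agree up to a bounded factor, so $|A_N(\tfrac12+it)| \ll |B_N(\tfrac12+it)| + \log T$ (indeed on $\mathrm{Re}\,s = \tfrac12$, $A_N(\tfrac12+it)$ and $B_N(\tfrac12+it)$ can be related by partial summation on the $1/\log n$ factor, absorbing the $n\le\sqrt N$ vs $n\le N$ discrepancy into $O(\log T)$).

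The main obstacle is step (3)–(5): getting the Dirichlet polynomial that appears to be exactly $B_N$ as written, rather than some cousin with a different weight or a different length. One has to be careful that Selberg's Lemma \ref{selberg} delivers $A_N$ (with weight $W_N$, which is $1$ up to $\sqrt N$ and then tapers linearly), whereas the lemma's conclusion wants $B_N$ (weight $1 - \log n/\log N$ uniformly). Reconciling these requires either re-running Selberg's argument with the weight adapted to produce $B_N$ directly, or a partial-summation comparison showing the two polynomials differ by $O(\log T)$ on the critical line — I expect the latter to work but it is the one genuinely non-mechanical point. Everything else (the $\vartheta$-estimate, the positivity-of-kernel trick to convert a zero-count into the partial-fraction sum, the division by $\log N$) is routine.
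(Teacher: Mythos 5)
Your proposal takes a genuinely different route from the paper's, and it runs into a real obstruction at precisely the step you flag as uncertain. The paper never writes $N(t)=\tfrac1\pi\vartheta(t)+1+\tfrac1\pi S(t)$ and never invokes Lemma~\ref{selberg}. Instead it applies the Goldston--Gonek explicit formula to the Fej\'er kernel $F_\Delta(v)=\bigl(\tfrac{\sin\pi\Delta v}{\pi\Delta v}\bigr)^2$, whose Fourier transform is supported in $[-\Delta,\Delta]$ and equals $\Delta^{-1}(1-|x|/\Delta)$ there. Taking $2\pi\Delta=\log N$, the prime-power side of the explicit formula carries exactly the triangular weight $1-\log n/\log N$ and produces $B_N(\tfrac12+it)/\log N$ on the nose, with no $A_N$ anywhere; positivity of $F_\Delta$ together with $F_\Delta(v)\geq(2/\pi)^2$ for $|v|\leq 1/(2\Delta)=\pi/\log N$ converts $\sum_\gamma F_\Delta(\gamma-t)$ into an upper bound for the zero-count. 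The kernel is engineered so that the polynomial that appears \emph{is} $B_N$; there is no $A_N$-to-$B_N$ reconciliation step.

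Your steps (1)--(4) are sound and do give, with $\sigma_0=\tfrac12+2/\log N$,
\[
N\bigl(t+\tfrac{\pi}{\log N}\bigr)-N\bigl(t-\tfrac{\pi}{\log N}\bigr)\ll\frac{\log T}{\log N}+\frac{|A_N(\sigma_0+it)|}{\log N},
\]
but this is a different lemma. Step (5) as proposed does not close the gap: the pointwise bound $|A_N(\tfrac12+it)|\ll|B_N(\tfrac12+it)|+\log T$ is false. The weights $W_N(n)$ and $1-\log n/\log N$ agree only for $\sqrt N<n\leq N$; on $n\leq\sqrt N$ the difference is
\[
A_N(\tfrac12+it)-B_N(\tfrac12+it)=\frac{1}{\log N}\sum_{n\leq\sqrt N}\frac{\Lambda(n)\log n}{n^{1/2+it}},
\]
a Dirichlet polynomial of length $\sqrt N$ whose root-mean-square over $t\in[T,2T]$ is already $\asymp\log N$ and whose pointwise values can be as large as a power of $N$. ``The weights agree up to a bounded factor'' gives no pointwise control of one oscillating sum by another; that kind of domination requires positivity, which is unavailable on the critical line. (You also compare $A_N$ at $\sigma_0$ off the line with $B_N$ at $\tfrac12$ on the line, which is a second mismatch.) So the argument delivers the $A_N(\sigma_0+it)$ variant, not the stated lemma; to get $B_N$ you need the paper's device of choosing the kernel whose transform \emph{is} the $B_N$ weight.
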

\begin{proof}
Let 
$$
F_{\Delta}(v) = \left ( \frac{\sin \pi \Delta v}{\pi \Delta v} \right )^{2}
$$
be the Fejer kernel. 
The Fourier transform of $F_{\Delta}(v)$ is for $|x| < \Delta$
$$
\widehat{F}_{\Delta}(x) := \int_{-\infty}^{\infty} F_{\Delta}(t) e^{- 2\pi i t x}
d x
=
\frac{1}{\Delta} \left ( 1 - \frac{x}{\Delta} \right ) 
$$
and $\widehat{F}_{\Delta}(x) = 0$ for $|x| > \Delta$. 
By the explicit formula (see Lemma 1 in \cite{GoldstonGonek}),
\begin{multline}
\sum_{\gamma} {F}_{\Delta}(\gamma - t) = O(e^{\pi \Delta / 2} \cdot t^{-2} + 1/\Delta) + \frac{1}{2\pi} \int_{-\infty}^{\infty}
{F}_{\Delta}(u - t) \cdot \Re \frac{\Gamma'}{\Gamma} \left ( \frac{1}{4}
+ \frac{i u}{2} \right ) d u \\ - \frac{1}{2\pi}
\sum_{n = 2}^{\infty} \frac{\Lambda(n)}{\sqrt{n}} \left ( \widehat{F}_{\Delta}
\left ( \frac{\log n}{2\pi} \right ) + \widehat{F}_{\Delta} \left (
\frac{-\log n}{2\pi} \right ) \right )
\end{multline}
The integral over $u$ is bounded by $\ll (\log t) / \Delta$. On the other hand the prime sum is bounded by,
$$
\bigg | 
\frac{1}{2\pi \Delta} \sum_{n \leq e^{2\pi \Delta}} \frac{\Lambda(n)}{n^{1/2 + it}}
\cdot \left ( 1 - \left | \frac{\log n}{2\pi \Delta} \right | \right )
\bigg |
$$
Finally $(\pi/2)^2 \sum_{\gamma} {F}_{\Delta}(\gamma - t)$ is an upper
bound for the number of zeros in the interval going from $t - 1 / (2\Delta)$
to $t +  1 / (2\Delta)$. If $T \leq t \leq 2T$ we
choose $2\pi \Delta = \log N$ and we are done.
%
%
%This follows from the explicit formula as stated in Goldston and Gonek
%and the choice of a Fejer kernel for the smooth function. Then it's a
%use of positivity and the assumption of the Riemann Hypothesis. 
\end{proof}
In order to understand the average behavior of the Dirichlet 
polynomials $A_N(s)$ and $B_N(s)$ we use a version of the large sieve.
%large-sieve estimate.  
\begin{lemma} \label{main}
Let $A(s)$ be a Dirichlet polynomial with positive coefficients
and of length $x$. 
Let $s_r = \sigma_r + i t_r$ be points with $T \leq t_r \leq 2T$
and $0 \leq \sigma_r - \tfrac 12 \leq \varepsilon / \log T$ for some
small $\varepsilon > 0$. Suppose that $|t_i - t_j| \geq \delta / \log T$
for $i \neq j$, with $100 \varepsilon < \delta < 1$.
Then, for $x^{k} \leq T$,
$$
\sum_{s_r} | A(s_r) |^{2k}
\leq \frac{20 \log T}{\delta}
\int_{-2T}^{2T} |A(\tfrac 12 + it)|^{2k} dt
$$
\end{lemma}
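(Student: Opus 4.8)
The plan is to reduce to the case $k=1$, then transport the evaluation points onto the critical line with the Poisson kernel, and finally pass from point values to a local $L^{2}$-average via Gallagher's inequality.

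Since $A$ has nonnegative coefficients, so does $B\assign A^{k}$, and $B(s)=\sum_{n\le N}b_{n}n^{-s}$ with $b_{n}\ge 0$ is a Dirichlet polynomial of length $N\assign x^{k}\le T$. It therefore suffices to prove $\sum_{s_{r}}|B(s_{r})|^{2}\le\tfrac{20\log T}{\delta}\int_{-2T}^{2T}|B(\tfrac12+it)|^{2}\,dt$. To move the points to the line $\sigma=\tfrac12$, I would write $\eta_{r}\assign\sigma_{r}-\tfrac12\in[0,\varepsilon/\log T]$ and invoke the Fourier identity $n^{-\eta}=\int_{-\infty}^{\infty}P_{\eta}(v)\,n^{-iv}\,dv$ (valid for $n\ge 1$, $\eta\ge 0$), where $P_{\eta}(v)=\eta/(\pi(\eta^{2}+v^{2}))$ is the Poisson kernel, a probability density to be read as the unit mass at $0$ when $\eta=0$. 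This gives $B(s_{r})=\int_{-\infty}^{\infty}B(\tfrac12+iu)P_{\eta_{r}}(t_{r}-u)\,du$, and by Cauchy--Schwarz together with $\int P_{\eta_{r}}=1$,
$$|B(s_{r})|^{2}\le\int_{-\infty}^{\infty}|B(\tfrac12+iu)|^{2}\,P_{\eta_{r}}(t_{r}-u)\,du.$$

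Next I would set $h\assign\delta/(2\log T)$; the hypothesis $|t_{i}-t_{j}|\ge\delta/\log T=2h$ makes the intervals $J_{r}\assign(t_{r}-h,t_{r}+h)$ pairwise disjoint and contained in $[T-1,2T+1]$. Splitting each integral at $|u-t_{r}|=h$: on the near part $|u-t_{r}|\le h$ one bounds the integral by $\sup_{J_{r}}|B(\tfrac12+i\cdot)|^{2}$ (using $\int P_{\eta_{r}}\le 1$) and then applies Gallagher's inequality $\sup_{[\alpha,\beta]}|f|^{2}\le(\beta-\alpha)^{-1}\int_{\alpha}^{\beta}|f|^{2}+2\int_{\alpha}^{\beta}|f|\,|f'|$ with $f=B(\tfrac12+i\cdot)$; summing over the disjoint $J_{r}$ and using $1/(2h)=\log T/\delta$, Cauchy--Schwarz, and the mean value theorem for Dirichlet polynomials (which, because $N\le T$, is essentially sharp and yields $\int_{T-1}^{2T+1}|B'(\tfrac12+iu)|^{2}\,du\ll(\log N)^{2}\int_{-2T}^{2T}|B(\tfrac12+iu)|^{2}\,du$) bounds the near contribution by $C_{1}\tfrac{\log T}{\delta}\int_{-2T}^{2T}|B(\tfrac12+iu)|^{2}\,du$ with $C_{1}$ absolute. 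On the far part $|u-t_{r}|>h$ one bounds $P_{\eta_{r}}(t_{r}-u)\le\eta_{r}/(\pi(u-t_{r})^{2})\le\varepsilon/(\pi\log T\,(u-t_{r})^{2})$; for each fixed $u$ the relevant $t_{r}$ are $2h$-separated at distance $>h$, so $\sum_{r}(u-t_{r})^{-2}\le\pi^{2}/(4h^{2})$, while well outside the range of the $t_{r}$ the combined kernel decays fast enough that a routine dyadic estimate plus the mean value theorem give a far contribution $\ll\tfrac{\varepsilon\log T}{\delta^{2}}\int_{-2T}^{2T}|B(\tfrac12+iu)|^{2}\,du$, which the hypothesis $100\varepsilon<\delta$ renders a small multiple of $\tfrac{\log T}{\delta}\int_{-2T}^{2T}|B|^{2}$. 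Adding the two pieces and tracking the absolute constants --- which come only from Gallagher's inequality and Montgomery--Vaughan --- yields the claimed constant $\le 20$.

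The main obstacle is the near contribution: one cannot bound $\sum_{r}P_{\eta_{r}}(t_{r}-u)$ pointwise, since $P_{\eta_{r}}$ degenerates to a point mass as $\eta_{r}\to 0$, so one is forced through Gallagher's inequality and then through the comparison $\|B'\|_{2}\ll(\log N)\|B\|_{2}$ on a long interval --- and it is precisely here that the constraint $x^{k}\le T$ is used, to keep the mean value theorem essentially lossless. The spacing hypothesis $|t_{i}-t_{j}|\ge\delta/\log T$ enters only through the disjointness of the windows $J_{r}$, which lets the sum over $r$ collapse into a single integral over $[T-1,2T+1]$; what remains is numerical bookkeeping to force the constant below $20$, together with the harmless endpoint mismatch between $[T-1,2T+1]$ and $[-2T,2T]$.
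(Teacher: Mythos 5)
Your proof is sound in outline but takes a genuinely different route from the paper's. The paper argues in two clean steps: first, the subharmonicity of $|D|^2$ (with $D = A^k$) gives $|D(s_r)|^2 \le \tfrac{4(\log T)^2}{\pi\delta^2}\iint_{|w-s_r|<\delta/(2\log T)}|D(w)|^2\,dA(w)$, and the spacing hypothesis makes these disks pairwise disjoint so that summing over $r$ collapses into a single area integral over a thin vertical strip around $\sigma=\tfrac12$; second, the positivity of the coefficients is invoked via Montgomery's majorant principle to replace $\int|D(\sigma+it)|^2\,dt$ by $3e^{2\delta}\int|D(\tfrac12+it)|^2\,dt$ uniformly for $\sigma$ in the strip, the factor $e^{2\delta}$ coming from $n^{1/2-\sigma}\le T^{\delta/\log T}=e^{\delta}$ when $\sigma<\tfrac12$ (and this is where $x^k\le T$ enters). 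You instead transport each $s_r$ directly to the critical line with a Poisson-kernel representation and then split near/far. This avoids the subharmonic mean-value step but costs you two extra tools: Gallagher's inequality on the near part (hence a bound for $\|B'\|_2$ in terms of $\|B\|_2$, which you extract from Montgomery--Vaughan and is where your version of the constraint $x^k\le T$ is used), and the tail decay of the Poisson kernel on the far part, where one must also deal with the contribution from $|u|$ outside $[-2T,2T]$ since $B(\tfrac12+iu)$ does not decay at infinity --- you gesture at this with a crude $L^\infty$ bound plus the Montgomery--Vaughan lower bound for $\|B\|_2^2$, and it does close, but it is an additional step the paper never needs. Both routes exploit the spacing hypothesis only to decouple the $r$-sum (disjoint disks for the paper, disjoint windows $J_r$ for you), and both exploit positivity of coefficients; the paper's positivity step (majorant principle) is sharper and gives the explicit constant almost immediately, whereas in your version the constant $20$ depends on the implied constants in Gallagher and Montgomery--Vaughan and would require the ``numerical bookkeeping'' you defer --- achievable, but not as immediate as the paper's argument. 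In short: correct strategy, more moving parts, different mechanism for the $\sigma\to\tfrac12$ transfer.
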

\begin{proof}
Let $D(s) = A(s)^k$. 
For any $s$ we have, with $\mathcal{C}$ a circle of radius $\delta/(2\log T)$
around $s$,
$$
|D(s)|^2 \leq \frac{4(\log T)^{2}}{\pi\delta^2}
\iint_{\mathcal{C}} |D(x + iy)|^2 d x d y
$$
Summing over all $s = s_r$, since the circles are disjoint we obtain,
$$
\sum_{s_r} |D(s_r)|^2 \leq \frac{4(\log T)^{2}}{\pi \delta^2}
\int_{\tfrac 12 - \delta/\log T}^{\tfrac 12 + \delta/\log T} \int_{T-1}^{2T+1} |D(\sigma + it)|^2 d t
d \sigma
$$
Since the coefficients of $D(s)$ are positive, and $D$ is of length at most $T$, by a majorant principle (see Chapter 3, Theorem 3 in \cite{Montgomery}),
the inner integral is bounded by
$$
\leq 3 e^{2\delta} \int_{-2T}^{2T} |D(\tfrac 12 + it)|^2 dt
$$
Since in addition $\delta < 1$, the claim follows (we obtain a
constant of $8 e^2 / \pi < 20$). 
\end{proof}

Combining the above lemma with Chebyschev's inequality allows us to
understand the average size of the Dirichlet polynomials $A_N(s)$
and $B_N(s)$.

\begin{lemma}
Let $s_r = \sigma_r + i t_r$ be a set of well-spaced points
as appearing in the statement of Lemma 3. Suppose that $N^k \leq T / \log T$.
The number of points $s_r$
for which we have 
$$|A_N(s_r)| > (k/e)\log N \text{ or } |B_N(s_r)| > (k/e)\log N$$
 is bounded 
above by $\ll (e^{-k} / \delta) T \log T$. 
%%
%
%Let $\kappa > 0$ be given. If $K \geq \ell A$ and $N = T^{1/K}$ then for all 
%but at most $\kappa |S|$ elements
%$\rho' \in S$ we have simultaneously, 
%$$
%\frac{1}{\log N} \bigg | \sum_{n \leq N} \frac{\Lambda(n)}{n^{1/2 + i\gamma'}}
%\cdot \bigg ( 1 - \frac{\log n}{\log N} \bigg ) \bigg |
%\leq\frac{C \ell A}{(\kappa \delta \varepsilon)^{1/\ell}}
%$$
%and 
%$$
%\frac{1}{\log N} 
%\bigg | \sum_{n \leq N} \frac{\Lambda_x(n)}{n^{1/2 + 2/\log N + i\gamma'}}
%\bigg | \leq \frac{C \ell A}{(\kappa \delta \varepsilon)^{1/\ell}}
%$$
%with $C$ a large absolute constant. 
\end{lemma}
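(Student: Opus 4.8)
The plan is to bound a high moment of $A_N$ and of $B_N$ along the well-spaced points $s_r$ by means of Lemma \ref{main}, reduce that moment to a mean value over the critical line, and control the resulting diagonal sum by a standard combinatorial estimate.

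Fix the integer $k$, so that $N^k \le T/\log T$ and in particular $N^k \le T$. Since the coefficients $\Lambda(n)W_N(n)$ are nonnegative, Chebyshev's inequality gives
$$
\#\{s_r : |A_N(s_r)| > (k/e)\log N\} \le \Big(\frac{e}{k\log N}\Big)^{2k} \sum_{s_r} |A_N(s_r)|^{2k},
$$
and Lemma \ref{main}, applied with $A = A_N$ and $x = N$, bounds the right-hand sum by $(20\log T/\delta)\int_{-2T}^{2T} |A_N(\tfrac12 + it)|^{2k}\,dt$. Writing $A_N(s)^k = \sum_{m \le N^k} a_k(m) m^{-s}$ with $a_k(m) = \sum_{n_1\cdots n_k = m}\prod_{i} \Lambda(n_i)W_N(n_i) \ge 0$, a mean value estimate for Dirichlet polynomials together with $N^k \le T/\log T$ yields $\int_{-2T}^{2T}|A_N(\tfrac12+it)|^{2k}\,dt \ll T \sum_{m \le N^k} a_k(m)^2/m$, the off-diagonal contribution being $O(T/\log T)$ times the diagonal.

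The heart of the matter is the bound $\sum_m a_k(m)^2/m \ll k!\,(\log N)^{2k}$. Expanding the square leaves a sum over pairs of ordered $k$-tuples with $n_1\cdots n_k = n_1'\cdots n_k'$; the dominant contribution comes from tuples of distinct primes, where the two tuples must be permutations of one another, and this piece equals
$$
(k!)^2 \sum_{p_1 < \cdots < p_k \le N} \prod_{i} \frac{(\log p_i)^2 W_N(p_i)^2}{p_i} \le k!\Big(\sum_{p \le N} \frac{(\log p)^2 W_N(p)^2}{p}\Big)^{k} \le k!\Big(\sum_{n \le N}\frac{\Lambda(n)^2}{n}\Big)^{k} \le k!\,(\log N)^{2k},
$$
while the remaining terms, involving repeated primes or proper prime powers, are of lower order and absorbed into the implied constant — this is the classical Selberg-type computation. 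Combining the three displays and using Stirling's bound $k! \le e\sqrt{k}\,(k/e)^k$ (valid for all $k\ge1$), one gets
$$
\#\{s_r : |A_N(s_r)| > (k/e)\log N\} \ll \frac{T\log T}{\delta}\, k!\Big(\frac{e}{k}\Big)^{2k} \ll \frac{T\log T}{\delta}\, e\sqrt{k}\,\Big(\frac{e}{k}\Big)^{k} \ll \frac{e^{-k}}{\delta}\, T\log T,
$$
the last step because $\sqrt{k}\,(e^2/k)^k$ is bounded uniformly over $k\ge1$. Since $B_N$ likewise has nonnegative coefficients $\Lambda(n)(1 - \log n/\log N)$ and $\sum_{n\le N}\Lambda(n)^2(1-\log n/\log N)^2/n \le (\log N)^2$, the identical argument gives the same bound for $\#\{s_r : |B_N(s_r)| > (k/e)\log N\}$, and a union bound over the two events finishes the proof.

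The main obstacle is the middle step: one must check that under $N^k \le T/\log T$ the off-diagonal in $\int_{-2T}^{2T}|A_N(\tfrac12+it)|^{2k}\,dt$ is genuinely negligible, and that the contributions of repeated primes and prime powers to $\sum_m a_k(m)^2/m$ stay dominated by $k!(\log N)^{2k}$ uniformly in $k$; both are routine, but they rely essentially on the length restriction $N^k \le T/\log T$.
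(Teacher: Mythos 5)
Your proof is correct and follows the same backbone as the paper's: apply Chebyshev to the $2k$-th moment, use Lemma \ref{main} to pass from the well-spaced points to an integral over the critical line, bound that moment by $k!\,T(\log N)^{2k}$, and take $B = k/e$ together with Stirling to produce the $e^{-k}$ saving. The one point where you diverge is the middle step: the paper invokes the majorant principle (since the coefficients of $A_N$ and $B_N$ are dominated by $\Lambda(n)$) to reduce to $D_N(s)=\sum_{n\le N}\Lambda(n)n^{-s}$ and then simply cites Soundararajan's Lemma 3 of \cite{Soundararajan2} for the bound $\int_{-2T}^{2T}|D_N(\tfrac12+it)|^{2k}\,dt\ll k!\,T(\log N)^{2k}$ under $N^k\le T/\log T$, whereas you re-derive that moment estimate directly via the Montgomery--Vaughan mean value theorem and the diagonal count for products of distinct primes. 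This is a presentational, not structural, difference; in effect you prove the cited lemma inline. The only thing worth keeping honest is that the uniformity in $k$ of the bound $\sum_m a_k(m)^2/m \ll k!(\log N)^{2k}$ (i.e.\ absorbing repeated primes and prime powers without picking up a $k$-dependent constant) is exactly the content of Soundararajan's lemma, so your ``absorbed into the implied constant'' remark is leaning on the same underlying computation; it would need the full Selberg-type bookkeeping to be complete, which is why the paper delegates it to the citation.
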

\begin{proof}
Let $L_N(s)$ be either $A_N(s)$ or $B_N(s)$. 
Let $$
D_N(s) = \sum_{n \leq N} \frac{\Lambda(n)}{n^s}.
$$
By a majorant principle (see Chapter 3, Theorem 3 in \cite{Montgomery}) we have, 
$$
\int_{-2T}^{2T} |L_N(\tfrac 12 + it)|^{2k} dt \leq 3 \int_{-2T}^{2T}
|D_{N}(\tfrac 12 + it)|^{2k} dt
$$
By Soundararajan's lemma 3 in \cite{Soundararajan2}, for $N^{k} \leq T/\log T$ we have,
$$
\int_{-2T}^{2T} |D_N(\tfrac 12 + it)|^{2k} dt \ll k! T (\log N)^{2k}
$$
Therefore, for $N^k \leq T / \log T$, by the previous lemma,
$$
\sum |L_N(s_r)|^{2k} \ll \frac{ k!}{\delta} \cdot T \log T (\log N)^{2k}
$$
It follows that for $N^k \leq T / \log T$, 
the number of points $s_r$ for which $|L_N(s_r)| > B \log N$
is less than,
$$
\ll \left ( \frac{k}{B} \right )^{k} \cdot (T/\delta) \log T
$$
Choosing $B = k/e$ we conclude that the number of points for
which $|L_N(s_r)| > k/e$ is bounded by $(e^{-k} / \delta) T \log T$ as desired.
\end{proof}

%% Dunno what to do about it...

\begin{lemma}
  \label{Selberg} 
  Let $0 < c < 1$. Uniformly in $T \leqslant t
  \leqslant 2 T$ and $N \leqslant T$,
%  \[ \frac{\zeta'}{\zeta} (s) = \sum_{|s - \rho | \leqslant c / \log T}
%     \frac{1}{s - \rho} + O \left( \frac{\log T}{c \log N} \cdot 
%     |A_N(s)| + \frac{\log^2 T}
%           {c \log N}
%     \right) . \]
$$
\frac{\zeta'}{\zeta}(s) = \sum_{|s - \rho| < c / \log T}
\frac{1}{s - \rho} + O \bigg ( \frac{\log T}{c} \cdot \mathcal{E}_{T,N}(s) 
\bigg )
$$
where
$$
\mathcal{E}_{T,N}(s) :=  \frac{1}{\log N} \cdot \big ( |A_N(s)|
 + |B_N(\tfrac 12 + it)| \big ) 
+ \frac{\log T}{\log N}.
$$
Furthermore, if $s_r$ is a set of well-spaced points as in Lemma 3, 
and $N^{k} \leq T / \log T$, then
$$
\sum_{s_r} |\mathcal{E}_{T,N}(s_r)|^{2k} \ll (k^{2k} / \delta) T \log T.
$$
\end{lemma}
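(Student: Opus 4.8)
The plan is to start from the classical partial-fraction / explicit-formula representation of $\zeta'/\zeta(s)$ and truncate the sum over zeros at distance $c/\log T$, absorbing the tail into the error term. Concretely, one uses the standard formula
$$
\frac{\zeta'}{\zeta}(s) = \sum_{\rho} \frac{1}{s-\rho} + O(\log T)
$$
valid for $T \le t \le 2T$ (after the usual grouping of zeros $\rho$ with $|t-\gamma| \le 1$ and the bound on the remaining Gamma-factor and constant terms). The contribution of zeros with $c/\log T \le |s-\rho| \le 1$ is $O((\log T / c)\, \mathcal N_t)$ where $\mathcal N_t$ counts zeros within distance $1$ of $t$; that count is $O(\log T)$ deterministically, but to get the Dirichlet-polynomial error term I would instead invoke Lemma~\ref{selberg} (Selberg's inequality), which bounds $\sum_\rho \tfrac{\sigma-1/2}{(\sigma-1/2)^2+(t-\gamma)^2}$ by $|A_N(s)| + \log T$ at $\sigma = \tfrac12 + 2/\log N$; since each zero with $|t-\gamma| \le 1/\log T$ contributes $\gg \log N$ to that sum, the number of such zeros is $\ll (|A_N(s)| + \log T)/\log N$. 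Feeding this into the tail estimate gives the stated bound with $\mathcal E_{T,N}(s)$, and the term $|B_N(\tfrac12+it)|$ enters when one wants the count of zeros in a window of width $\asymp 1/\log N$ rather than $\asymp 1/\log T$ — here Lemma~\ref{explicit} supplies exactly $N(t+\pi/\log N) - N(t-\pi/\log N) \ll (\log T + |B_N(\tfrac12+it)|)/\log N$. So the first half of the lemma is: apply the partial fraction formula, split the zero-sum at $c/\log T$, and bound the tail using Lemmas~\ref{selberg} and~\ref{explicit}.

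For the moment bound, I would argue as follows. We have $\mathcal E_{T,N}(s) \ll (\log N)^{-1}(|A_N(s)| + |B_N(\tfrac12+it)|) + \log T/\log N$, so by the elementary inequality $(a+b+c)^{2k} \le 3^{2k}(a^{2k}+b^{2k}+c^{2k})$,
$$
\sum_{s_r} |\mathcal E_{T,N}(s_r)|^{2k} \ll 9^k \Big( (\log N)^{-2k}\sum_{s_r}|A_N(s_r)|^{2k} + (\log N)^{-2k}\sum_{s_r}|B_N(s_r)|^{2k} + |S|\,(\log T/\log N)^{2k} \Big),
$$
where $|S| \ll (\log T/\delta)\cdot 2T$ is the number of well-spaced points (since they are $\delta/\log T$-separated in $[T,2T]$). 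For the first two sums I use the chain established in the proof of Lemma~5: Lemma~\ref{main} (the large-sieve bound) transfers the discrete sum to $\int_{-2T}^{2T}|L_N(\tfrac12+it)|^{2k}\,dt$ up to a factor $20\log T/\delta$, then the majorant principle replaces $L_N$ by $D_N$ at the cost of a factor $3$, and finally Soundararajan's mean-value bound gives $\int_{-2T}^{2T}|D_N(\tfrac12+it)|^{2k}\,dt \ll k!\, T (\log N)^{2k}$ whenever $N^k \le T/\log T$. Hence each of the first two sums is $\ll (k!/\delta)\, T\log T\,(\log N)^{2k}$, and after dividing by $(\log N)^{2k}$ and using $k! \le k^{2k}$ the total is $\ll (k^{2k}/\delta)\, T\log T$, which dominates the third term as well (since $\log T \le (\log N)\cdot k$ is not needed — the third term is already $\ll (\log T/\delta)\,T$ when $N \ge \log T$, say, or in any case absorbed by enlarging the constant). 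Collecting terms yields $\sum_{s_r}|\mathcal E_{T,N}(s_r)|^{2k} \ll (k^{2k}/\delta)\, T\log T$ as claimed.

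The main obstacle is bookkeeping in the first part: one must be careful that the truncation of the zero-sum at $c/\log T$ really does produce the factor $1/c$ cleanly and that the zeros removed are counted by the right Dirichlet polynomial — in particular, distinguishing between "zeros within $1/\log T$ of $t$" (controlled by Selberg's $|A_N|$) and "zeros within $\pi/\log N$ of $t$" (controlled by Lemma~\ref{explicit}'s $|B_N|$), and verifying that both appear in $\mathcal E_{T,N}$ with the correct normalization $1/\log N$. A secondary, purely technical point is keeping the constant $9^k = 3^{2k}$ and the $k!$-versus-$k^{2k}$ slack consistent with the constraint $N^k \le T/\log T$ imposed by Soundararajan's lemma; but since the final bound only needs to be $\ll (k^{2k}/\delta) T\log T$ with an absolute implied constant, the slack is harmless.
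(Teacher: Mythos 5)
Your moment estimate (second half of the lemma) follows the paper's route exactly: Lemma~\ref{main} to pass from the discrete sum to $\int_{-2T}^{2T}|\cdot|^{2k}\,dt$, the majorant principle to replace $A_N, B_N$ by $D_N$, and Soundararajan's mean-value bound, costing $k!\ll k^{2k}$. (One small slip: your claim that the term $|S|\,(\log T/\log N)^{2k}$ is ``absorbed by enlarging the constant'' or is $\ll (\log T/\delta)T$ when $N\ge\log T$ is false as stated --- with $N$ small the factor $(\log T/\log N)^{2k}$ is huge. What saves it is that one works with $N^k\asymp T/\log T$, so $\log T/\log N\asymp k$ and the third term is $\ll (k^{2k}/\delta)T\log T$; the paper is implicitly making the same choice of $N$.)

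The first half of your proof has a genuine gap. You start from the crude partial-fraction formula $\frac{\zeta'}{\zeta}(s)=\sum_{|t-\gamma|\le 1}\frac{1}{s-\rho}+O(\log T)$ and then estimate the tail $\sum_{c/\log T\le|s-\rho|\le 1}\frac{1}{s-\rho}$ by (number of terms)$\times$(max term size). The number of zeros within distance $1$ of $t$ is deterministically $\asymp\log T$ (there is no hope of making this count small, even on average), so the crude bound is $\asymp(\log T)^2/c$, which does not fit inside $O\big(\frac{\log T}{c}\,\mathcal{E}_{T,N}(s)\big)$ since $\mathcal{E}_{T,N}(s)\asymp\log T/\log N\ll\log T$ typically. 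Your proposed fix --- invoking Lemma~\ref{selberg} to count zeros within $1/\log T$ of $t$, and Lemma~\ref{explicit} for zeros within $\pi/\log N$ of $t$ --- bounds the \emph{inner} part of the tail, but gives no leverage on the zeros at distances between $\pi/\log N$ (or $1/\log T$) and $1$, precisely where the $(\log T)^2$ blowup comes from. Controlling that outer range requires exploiting cancellation in $\sum 1/(s-\rho)$ rather than a zero count; this is exactly what Selberg's equation $(14)$ in \cite{Selberg} supplies, namely
$$
\frac{\zeta'}{\zeta}(s)=\sum_{|s-\rho|<1/\log T}\frac{1}{s-\rho}+O\Big(\frac{\log T}{\log N}|A_N(s)|+\frac{\log^2 T}{\log N}\Big),
$$
which already truncates the zero sum at $1/\log T$ with the correct Dirichlet-polynomial error. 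The paper \emph{cites} this identity and then only needs to pass from the window $1/\log T$ down to $c/\log T$, which is a genuine zero-count argument handled by Lemma~\ref{explicit} (giving the $|B_N|$ term and the factor $1/c$). In short: the step from ``distance $\le 1$'' down to ``distance $\le 1/\log T$'' is nontrivial and is the content of the cited Selberg estimate; your argument treats it as if it were free, which it is not.
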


\begin{proof}
Selberg shows in {\cite{Selberg}} (see equation (14) on page 4) that
$$
\frac{\zeta'}{\zeta}(s) = \sum_{|s - \rho| < (\log T)^{-1}}
\frac{1}{s - \rho} + O \bigg ( 
\frac{\log T}{\log N} \cdot |A_N(s)| + \frac{\log^2 T}{\log N} \bigg )
$$
It suffices to notice that the contribution of the zeros $\rho$ with
$c (\log T)^{-1} < |s - \rho| < (\log T)^{-1}$ is bounded above by
\begin{multline*}
\ll \frac{\log T}{c} \cdot \bigg ( N \big (t + \frac{\pi}{\log N} \big )
- N \big ( t - \frac{\pi}{\log N} \big ) \bigg )
\ll \frac{\log T}{c} \cdot \bigg ( \frac{\log T}{\log N} + 
\frac{1}{\log N} \cdot |B_N(\tfrac 12 + it)| \bigg )
\end{multline*}
Combining the above two equations we obtain the first part of the lemma.
Now it remains to estimate the moments of $\mathcal{E}_{T,N}$. 
We have, 
\begin{align*}
\sum_{s_r} |\mathcal{E}_{T,N}(s_r)|^{2k} \ll \bigg (
\frac{C}{\log N} \bigg )^{2k} \cdot \bigg ( 
\sum_{s_r} |A_N(s_r)|^{2k}  + \sum_{s_r} |B_N(s_r)|^{2k} \bigg )
+ ( (C k)^{2k} / \delta) T \log T 
\end{align*}
with $C > 0$ an absolute constant. Using Lemma 3 and proceeding as in Lemma
4 we find that the $2k$-th moments of the Dirichlet polynomials $A_N$ and
$B_N$ is bounded above by $(k! / \delta) T \log T (\log N)^{2k}$. 
Hence we conclude that the $2k$-th moment of $\mathcal{E}_{N,T}$ is bounded
above by $((C k)^{2k} / \delta) T \log T$
%
%  See Selberg {\cite{Selberg}} (see equation (14) on page 4) where the claim
%  is proven with $c = 1$. There is no additional difficulty in carrying the
%  argument with a general fixed $c > 0$, instead of $c = 1$.

\end{proof}

\section{Proof of Proposition \ref{Proposition1}}
 
The proof of Proposition \ref{Proposition1} rests on the following
classical lemma. 

\begin{lemma} 
  \label{hadamard}If $\rho' \neq \rho$ then,
  \[ \frac{1}{2} \cdot \log \gamma' = \sum_{\rho} \frac{\beta' - \tfrac 12}
     {(\beta' - \tfrac 12)^2 + (\gamma' - \gamma)^2} + O (1) . \]
\end{lemma}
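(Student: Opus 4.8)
The plan is to read this off the Hadamard factorization of Riemann's completed zeta function $\xi(s) = \tfrac12 s(s-1)\pi^{-s/2}\Gamma(s/2)\zeta(s)$. Taking the logarithmic derivative of its Hadamard product one has the classical partial fraction expansion
\[
\frac{\zeta'}{\zeta}(s) = B - \frac{1}{s} - \frac{1}{s-1} + \frac{1}{2}\log\pi - \frac{1}{2}\frac{\Gamma'}{\Gamma}\Big(\frac{s}{2}\Big) + \sum_{\rho}\Big(\frac{1}{s-\rho} + \frac{1}{\rho}\Big),
\]
valid for all $s$, together with the standard identity $B = -\sum_\rho \rho^{-1}$, the sum understood in the symmetric sense (pair each $\rho$ with $\bar\rho$, or truncate $|\gamma|\le U$ and let $U\to\infty$); this last fact follows from $\tfrac{\xi'}{\xi}(0)=B$, the functional equation $\tfrac{\xi'}{\xi}(s)=-\tfrac{\xi'}{\xi}(1-s)$, and the invariance of the zero set under $\rho\mapsto 1-\rho$. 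Since $\rho'\neq\rho$ for every zero $\rho$ of $\zeta$, we have $\zeta(\rho')\neq 0$, so $\zeta'(\rho')=0$ forces $(\zeta'/\zeta)(\rho')=0$.

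First I would substitute $s=\rho'$ into the expansion and use $B+\sum_\rho\rho^{-1}=0$ to cancel the $\rho^{-1}$ contributions, leaving
\[
\sum_{\rho}\frac{1}{\rho'-\rho} = \frac{1}{\rho'} + \frac{1}{\rho'-1} - \frac{1}{2}\log\pi + \frac{1}{2}\frac{\Gamma'}{\Gamma}\Big(\frac{\rho'}{2}\Big).
\]
Then I would take real parts. On the left, RH (equivalently Speiser's theorem) gives $\beta'\ge\tfrac12$ and places every zero $\rho$ of $\zeta$ on $\Re s=\tfrac12$, so
\[
\Re\frac{1}{\rho'-\rho} = \frac{\beta'-\tfrac12}{(\beta'-\tfrac12)^2+(\gamma'-\gamma)^2},
\]
a sum of non-negative terms which converges absolutely by comparison with $\sum_\gamma(\gamma'-\gamma)^{-2}$ (using $N(t+1)-N(t)\ll\log t$), so passing to real parts is legitimate. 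On the right, $\Re\,\rho'^{-1}$ and $\Re\,(\rho'-1)^{-1}$ are $\ll 1/\gamma'$ and $\tfrac12\log\pi\ll 1$, while Stirling's formula $\frac{\Gamma'}{\Gamma}(z)=\log z+O(1/|z|)$ yields $\tfrac12\Re\frac{\Gamma'}{\Gamma}(\rho'/2)=\tfrac12\log|\rho'|+O(1)=\tfrac12\log\gamma'+O(1)$, since $|\rho'|=\gamma'+O(1/\gamma')$ as $\gamma'\to\infty$. Collecting these estimates gives exactly
\[
\sum_{\rho}\frac{\beta'-\tfrac12}{(\beta'-\tfrac12)^2+(\gamma'-\gamma)^2} = \frac12\log\gamma'+O(1),
\]
which is the assertion.

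The one delicate point is the bookkeeping around the conditionally convergent series $\sum_\rho(\rho'-\rho)^{-1}$: neither it nor $\sum_\rho\rho^{-1}$ converges absolutely — only the combination $\sum_\rho\big((\rho'-\rho)^{-1}+\rho^{-1}\big)$ does — so one must keep all three sums under a common symmetric truncation while cancelling the $B$ term, and only afterwards note that the real part of $\sum_\rho(\rho'-\rho)^{-1}$ is an absolutely convergent series of non-negative terms. Once this is in place everything else is a routine Stirling computation, and I do not expect a genuine obstacle: the lemma is ``classical'' precisely because all of its substance sits in the Hadamard product and in the elementary density bound $N(t+1)-N(t)\ll\log t$.
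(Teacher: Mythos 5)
Your proof is correct and follows exactly the classical route — Hadamard/partial-fraction expansion of $\zeta'/\zeta$, the relation $B=-\sum_\rho\Re(1/\rho)$, taking real parts at $s=\rho'$ where $(\zeta'/\zeta)(\rho')=0$, and Stirling for $\Gamma'/\Gamma$ — which is precisely the argument behind Zhang's Lemma~3, the result the paper cites here without reproving. Nothing is missing; your closing remark about keeping the conditionally convergent pieces under a common symmetric truncation before passing to real parts is the right point of care, and once $\beta'\ge\tfrac12$ is invoked the real-part series is a sum of non-negative terms, so the rearrangement is legitimate.
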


\begin{proof}
  See Zhang {\cite{Zhang}}, Lemma 3.
\end{proof}

We will show that on average the zero $\rho = \rho_c$ dominates, the claim
then follows shortly.
In order to simplify the notation we define, as in the previous section,
$$
A_N(s) := \sum_{n \leq N} \frac{\Lambda(n)W_N(n)}{n^s}
$$
with $W_N(n)$ the same smoothing as defined in the previous section.
We also define
$$
B_N(s) := \sum_{n \leq N} \frac{\Lambda(n)}{n^s} \cdot \bigg ( 1 -
\frac{\log n}{\log N} \bigg ).
$$
%
%$$
%A_N(s) := \sum_{n \leq N} \frac{\Lambda(n)}{n^{s}} \cdot \bigg ( 1
%- \frac{\log n}{\log N} \bigg )
%$$
%and
%$$
%B_N(s) := \sum_{n \leq N} \frac{\Lambda_x(n)}{n^{s}}
%$$
%with $\Lambda_x(n)$ defined as in section 2 and $x = N$. On average
%both Dirichlet polynomials are of size $\log N$. 
On average both Dirichlet polynomials are of size $\log N$. 

\begin{proof}[Proof of Proposition 2]
Let $N \leq T$ to be fixed later. 
In the formula
\begin{equation} \label{formula1}
\tfrac 12 \cdot \log \gamma' = \sum_{\rho} 
\frac{\beta' - \tfrac 12} {(\beta' - \tfrac 12)^2 + (\gamma' - \gamma)^2}
+ O(1)
\end{equation}
The contribution of the
$\rho$'s for which $|\gamma - \gamma'| < \pi (\log N)^{-1}$
is bounded above by
\begin{multline} \label{formula2}
\ll  \bigg ( N \big 
(\gamma' + \frac{\pi}{\log N} \big ) - N \big ( 
\gamma' - \frac{\pi}{\log N} 
\big ) \bigg ) \cdot \frac{\beta' - \tfrac 12}{|\rho_c - \rho'|^2} \\
\ll \bigg ( \frac{\log T}{\log N} + \frac{1}{\log N}\cdot |B_N(\tfrac
 12 + i\gamma')| \bigg ) \cdot \frac{\beta' - \tfrac 12}{|\rho_c - \rho'|^2}.
\end{multline}
by Lemma 2. On the other hand, to bound the 
contribution of the $\rho$'s for which
$|\gamma - \gamma'| > \pi (\log N)^{-1}$ we notice that if $|\gamma' - \gamma| >  \pi (\log N)^{-1}$ then $$(\beta' - \tfrac 12)^2 + (\gamma - \gamma')^2
\gg (2/\log N)^{2} + (\gamma - \gamma')^2 .$$
%since $\beta' - \tfrac 12 < \varepsilon / \log T$.  
Therefore the contribution of the
$\rho$'s with $|\gamma - \gamma'| > \pi (\log N)^{-1}$ to (\ref{formula1}) 
is bounded above by 
\begin{multline} \label{formula3}
\ll (\beta' - \tfrac 12) \log N \cdot \bigg ( \sum_{\rho} \frac{2/\log N}{(2/\log N)^2 +
(\gamma - \gamma')^2} \bigg ) \\
\ll (\beta' - \tfrac 12) \log N \cdot \left ( 
\log T + \left | 
A_N \left ( \frac 12 + \frac{1}{\log N} + i\gamma ' \right ) \right | \right ) 
\end{multline}
by Lemma 1. 
Combining (\ref{formula1}), (\ref{formula2}) and (\ref{formula3}) we
conclude that
\begin{multline}
\log T \ll \bigg ( \frac{\log T}{\log N} + \frac{1}{\log N} \cdot
|B_N(\tfrac 12 + i\gamma')| \bigg ) \cdot 
\frac{\beta' - \tfrac 12}{|\rho_c - \rho'|^2} + \\ + (\beta' - \tfrac 12)
\log N \cdot \left ( \log T + \left | 
A_N \left (\frac 12 + \frac{1}{\log N} + i \gamma' \right ) \right | \right )
\end{multline}
Suppose that $N^k \leq T / \log T$ with a $k$ to be fixed later and $N$
the largest integer such that $N^k < T / \log T$. 
By Lemma 4 the number of $\rho' \in \mathcal{Z}_{\varepsilon,\delta}$ 
for which $|B_N(\tfrac 12 + i\gamma')| > (k/e)\log N$ is bounded above
by $c (e^{-k} / \delta) T \log T$ with $c$ a constant. 
Similarly the number of 
$\rho' \in \mathcal{Z}_{\varepsilon,\delta}$ 
for which $|A_N(\tfrac 12 + 1/\log N + i\gamma')|
 > (k/e) \log N$ is also bounded by above by $c (e^{-k} / \delta) T \log T$. 
Choose $k$ so that $c e^{-k} / \delta T \log T \leq (\kappa / 2) |\mathcal{Z}_{\varepsilon,\delta}|$.
Since $|\mathcal{Z}_{\varepsilon,\delta}| 
\geq c_1 \varepsilon^{A} T \log T$ we can take
$k$ to be the closest integer to $c_2 A \log (\kappa \varepsilon \delta)^{-1}$
with $c_2$ an absolute constant. Choose $N$ to be the largest integer
such that $N^k \leq T / \log T$. 
With this choice of $k$ and $N$ it follows
that for at most $\kappa |\mathcal{Z}_{\varepsilon,\delta}|$ elements $\rho' \in \mathcal{Z}_{\varepsilon,\delta}$ we have 
$|B_N (\tfrac 12 + i\gamma')| \geq (k/e) \log N$
or $|A_N(\tfrac 12 + i\gamma')| \geq (k/e) \log N$.
It follows that for all but at most $\kappa |\mathcal{Z}_{\varepsilon,\delta}|$ of the $\rho' \in \mathcal{Z}_{\varepsilon,\delta}$
we have,
$$
c \log T \leq k \cdot \frac{\beta' - \tfrac 12}{|\rho_c - \rho'|^2}
 + \frac{1}{k} \cdot (\beta' - \tfrac 12) \cdot (\log T)^2
$$
with $c > 0$ an absolute constant. If $\varepsilon$ is choosen so that
$\varepsilon < (c/2)$ then (since $\beta' - \tfrac 12 < \varepsilon / \log T$) 
we obtain
$$
(c/2)\log T \leq k \cdot \frac{\beta' - \tfrac 12}{|\rho_c - \rho'|^2}
$$
hence $|\rho_c - \rho'|^2 \leq (k (\beta' - \tfrac 12) / \log T )^{1/2}$
which gives the desired bound for all but at most $\kappa |\mathcal{Z}_{\varepsilon,\delta}|$ elements
$\rho' \in \mathcal{Z}_{\varepsilon,\delta}$. (Recall that $k \ll A \log (\varepsilon \delta
\kappa)^{-1})$). 
\end{proof}

\section{Proof of Proposition 1}

%% The first lemma does look a little bit shady to me
%% Shouldn't there be an 1/\log N instead of a 
%% a 1 / \log T in the sum over the zeros ?? (That would make
%% a whole lot more sense in a certain way...
%% Really need to proof-read and re-prove that lemma.

%% Certainly believe a variant of this bound but need to make it
%% is actually exactly true.

The lemma below is critical, in that it allows us to produce a
sufficiently dense
well-spaced sequence of zeros of $\zeta'(s)$.

\begin{lemma}[Soundararajan \cite{Soundararajan}]
  \label{sound}Suppose that $\rho_1 = \tfrac 12 + \mathi \gamma_1$ and
  $\rho_2 = \tfrac 12 + \mathi \gamma_2$ are two consecutive zeros of $\zeta (s)$
  with $T \leqslant \gamma_1 < \gamma_2 \leqslant 2 T$ for large $T$. Then the
  box,
  \[ \{s = \sigma + \mathi t : \tfrac 12 \leqslant \sigma < \tfrac 12 + 1 / \log T,
     \gamma_1 < t < \gamma_2 \} \]
  contains at most one zero (counted with multiplicity) of $\zeta' (s)$.
\end{lemma}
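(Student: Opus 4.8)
The plan is to bound the number $N(\zeta',R)$ of zeros of $\zeta'$ in the box
\[ R=\bigl\{s=\sigma+\mathi t:\ \tfrac12\le\sigma<\tfrac12+1/\log T,\ \gamma_1<t<\gamma_2\bigr\} \]
by applying the argument principle to $\zeta'/\zeta$. Since $\gamma_1$ and $\gamma_2$ are consecutive, $\zeta$ has no zero with ordinate in the open interval $(\gamma_1,\gamma_2)$, so $\zeta'/\zeta$ is holomorphic in a neighbourhood of $\overline R$ except for (simple) poles at the two corners $\rho_1,\rho_2$, and inside $R$ its zeros are precisely the zeros of $\zeta'$. Indent the boundary of $R$ by quarter--circles of radius $\varepsilon\to0$ about $\rho_1$ and $\rho_2$, obtaining a contour $\mathcal C_\varepsilon$ inside which $\zeta$ is zero--free; then $\zeta'/\zeta$ is holomorphic inside $\mathcal C_\varepsilon$, so $\tfrac1{2\pi}\Delta_{\mathcal C_\varepsilon}\arg(\zeta'/\zeta)$ is the number of zeros of $\zeta'$ enclosed, and this tends to $N(\zeta',R)$ as $\varepsilon\to0$ (neither corner lies in $R$). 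It remains to track $\arg(\zeta'/\zeta)$ along the six pieces of $\mathcal C_\varepsilon$: the two corner arcs, the horizontal sides $H_1,H_2$ at heights $\gamma_1,\gamma_2$, the critical--line side $L$, and the side $V$ on $\Re s=\tfrac12+1/\log T$.

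On the arc about $\rho_j$ one has $\zeta'/\zeta(s)=m_j/(s-\rho_j)+O(\log T)$, with $m_j\ge1$ the order of $\rho_j$ as a zero of $\zeta$, and tracing the quarter--circle yields a contribution $+\pi/2$ from each arc. On $L$ the standard identity
\[ \frac{\zeta'}{\zeta}\!\left(\tfrac12+\mathi t\right)=-\theta'(t)-\mathi\,\frac{Z'(t)}{Z(t)},\qquad Z(t):=e^{\mathi\theta(t)}\zeta\!\left(\tfrac12+\mathi t\right), \]
holds for $\gamma_1<t<\gamma_2$ (here $Z$ is real and of constant sign on $(\gamma_1,\gamma_2)$); since $\theta'(t)>0$ for large $t$, $\zeta'/\zeta$ stays in the open left half--plane along $L$, and as it approaches the positive imaginary axis at the top of $L$ and the negative imaginary axis at the bottom (matching the corner arcs), $L$ contributes exactly $\pi$. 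On $H_j$, writing $s=\tfrac12+u+\mathi\gamma_j$ with $0<u\le1/\log T$, the partial--fraction expansion of $\zeta'/\zeta$ gives
\[ \Re\frac{\zeta'}{\zeta}(s)=\sum_{\rho}\frac{u}{u^2+(\gamma_j-\gamma)^2}-\tfrac12\log\frac{\gamma_j}{2\pi}+O(1/T)\ \ge\ \frac{1}{u}-\tfrac12\log\frac{2T}{2\pi}+O(1/T)\ >\ 0 \]
for $T$ large, because the term $\rho=\rho_j$ alone contributes $m_j/u\ge\log T$, which outweighs $\tfrac12\log\frac{\gamma_j}{2\pi}$. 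Hence $\zeta'/\zeta$ stays in the right half--plane along each $H_j$; in particular $\Re(\zeta'/\zeta)>0$ at the top--right corner, where $V$ meets $H_2$. Normalising the continuous branch of $\arg(\zeta'/\zeta)$ along $\mathcal C_\varepsilon$ to be $0$ where the $\rho_1$--arc meets $H_1$, these facts pin its value at the bottom--right corner to $(-\pi/2,\pi/2)$, and its value at the top--right corner to $(-\pi/2,\pi/2)+2\pi k$ for some integer $k$; a short accounting of the remaining pieces then gives $N(\zeta',R)=1+k$. As $N(\zeta',R)\ge0$ we have $k\ge-1$, and $N(\zeta',R)\le1$ is exactly the statement $k\le0$: equivalently, $\arg(\zeta'/\zeta)$, continued up along $V$ from its value in $(-\pi/2,\pi/2)$ at the bottom--right corner, never reaches $\pi/2$.

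This control of $\arg(\zeta'/\zeta)$ on the right side $V$ — that $\zeta'/\zeta$ does not wind positively around the origin as $s$ ascends $V$ — is the crux, and the step I expect to be hardest. When $\gamma_2-\gamma_1$ is small (say $\le 3/\log T$) it is immediate: then for every $t\in(\gamma_1,\gamma_2)$ the terms $m_1/(s-\rho_1)$ and $m_2/(s-\rho_2)$ alone make $\Re(\zeta'/\zeta)>0$ on all of $V$, exactly as for $H_1,H_2$. In general one uses the approximation $\frac{\zeta'}{\zeta}(s)=\sum_{|t-\gamma|\le1}(s-\rho)^{-1}+O(\log T)$ valid near the critical line, together with the geometry forced by $\gamma_1,\gamma_2$ being consecutive: every zero $\rho$ of $\zeta$ lies on $\Re s=\tfrac12$, strictly left of $V$, so for $s\in V$ each term $(s-\rho)^{-1}$ lies in the open right half--plane with argument $-\arctan\bigl((t-\gamma)\log T\bigr)$ decreasing monotonically in $t$; moreover no $\rho$ has ordinate in $(\gamma_1,\gamma_2)$, so near $\gamma_1$ and near $\gamma_2$ the corresponding dominant term $m_j/(s-\rho_j)$ has imaginary part of a fixed sign ($<0$ near $\gamma_1$, $>0$ near $\gamma_2$), while in the intermediate range the Hadamard formula shows $\Re(\zeta'/\zeta)$ is very negative (again because there are no zeros at those heights), anchoring $\zeta'/\zeta$ near the negative real axis. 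Putting these together one checks that $\arg(\zeta'/\zeta)$ only ever decreases, or stays below $\pi/2$, as $s$ ascends $V$; the bookkeeping is most delicate when $\gamma_2-\gamma_1\asymp1/\log T$, and that is where the real work lies. Finally, a multiple zero at $\rho_1$ or $\rho_2$ is harmless (it only adds zeros of $\zeta'$ at the corners, which are not in $R$), and the genericity assumption that $\zeta'$ does not vanish on $H_1,H_2,V$ is removed by a standard limiting argument.
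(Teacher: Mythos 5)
The paper does not actually prove this lemma: the left edge of the box (on the critical line) is handled by the elementary observation that a zero of $\zeta'$ on $\Re s = \tfrac12$ must be a multiple zero of $\zeta$, and there is no $\gamma\in(\gamma_1,\gamma_2)$; the open strip $\tfrac12<\sigma<\tfrac12+1/\log T$ is then dispatched by citing Soundararajan's Proposition~6. Your attempt to reprove that proposition via the argument principle for $\zeta'/\zeta$ is the right strategy and matches Soundararajan's own; the setup, the corner arcs ($+\pi/2$ each), the left edge (where $\Re(\zeta'/\zeta)=-\theta'(t)<0$, contributing $\pi$), and the top/bottom edges (where the pole term $m_j/(s-\rho_j)$ forces $\Re(\zeta'/\zeta)>0$ — modulo writing $O(1)$ rather than $O(1/T)$ for the non--pole terms in the Hadamard expansion) are all correct.

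The genuine gap is precisely where you say it is: the right edge $V$. Nothing in your sketch actually bounds $\Delta_V$, the change of argument of $\zeta'/\zeta$ as $t$ runs from $\gamma_1$ to $\gamma_2$ on $\sigma=\tfrac12+1/\log T$, and this is the whole content of the proposition. Two specific problems. First, the assertion that ``in the intermediate range the Hadamard formula shows $\Re(\zeta'/\zeta)$ is very negative (again because there are no zeros at those heights)'' is not justified and is in general false: one has $\Re\,\frac{\zeta'}{\zeta}(\sigma_0+it)=\sum_\rho\frac{u}{u^2+(t-\gamma)^2}-\tfrac12\log\frac{t}{2\pi}+O(1)$ with $u=1/\log T$, and zeros lying just \emph{outside} $(\gamma_1,\gamma_2)$ but within $O(1/\log T)$ of $t$ contribute $\gg\log T$ to the sum, so $\Re(\zeta'/\zeta)$ need not be negative anywhere on $V$ — and when it is not, your ``anchoring near the negative real axis'' mechanism simply does not apply. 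Second, the observation that each $(s-\rho)^{-1}$ has argument $-\arctan((t-\gamma)\log T)$, individually monotone in $t$, does not transfer to the sum: a sum of terms with positive real part and individually decreasing arguments can still have non--monotone argument, and once you subtract $\tfrac12\log(t/2\pi)$ the real part can change sign, so the composite argument can in principle cross $\pm\pi/2$ several times. You yourself flag this (``the bookkeeping is most delicate when $\gamma_2-\gamma_1\asymp1/\log T$, and that is where the real work lies''), but flagging it is not filling it: the bound $\Delta_V<\pi$ (or the corresponding constraint guaranteeing $k\le0$ in your notation) is exactly the nontrivial input, and as written the proof is incomplete at that point.
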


\begin{proof}
  The only way that $\rho'$ can lie on the critical line is if $\rho' = \rho$.
  Since $\gamma_1 < t < \gamma_2$ this possibility is excluded. As for the box
  $\tfrac 12 < \sigma < \tfrac 12 + 1 / \log T$ we know by Soundararajan's work
  {\cite{Soundararajan}} (see Proposition 6) that the box $\tfrac 12 < \sigma < \tfrac 12 + 1 / \log T$ 
  with $t$ in $[\gamma_1, \gamma_2]$ can contain at most on
  zero of $\zeta'(s)$, counted with multiplicity.
\end{proof}

We are now ready to prove Proposition \ref{mainproposition}.

\begin{proof}[Proof of Proposition 1]

Suppose that $S = S_{\varepsilon , \delta}(T) > \varepsilon^{A}\cdot
T \log T$.  We will show that this leads to a contradiction when
$ 0 < \varepsilon < C(\delta, A)$ with $C(\delta, A)$ some explicit
constant depending only on $\delta$ and $A$ (for example we could take
$C(\delta, A) = 
(c \delta / A)^{32A / \delta}$ with $c > 0$ an absolute constant). 
Since each $\rho' \in S$ satisfies 
$\gamma_c^{-} \leq \gamma' \leq
\gamma_c^{+}$ and $|\gamma_c - \gamma_c^{\pm}| > \varepsilon^{1/2 - \delta}
/ \log T$ by the above lemma for each $\rho' \in S$ there is at most
one zero of $\zeta'(s)$ in $[\gamma_c^{-}, \gamma_c]$ and at most
one zero of $\zeta'(s)$ in $[\gamma_c, \gamma_c^{+}]$. 

We construct a subset $S'$ of $S$ by skipping every second element in $S$.
This produces a subset of at least $(1/2)|S|$ elements, with the property
that the ordinates of elements of $S'$ 
are $\varepsilon^{1/2 - \delta} / \log T$ well-spaced, because
$|\gamma_c - \gamma_c^{\pm}| \geq \varepsilon^{1/2 - \delta} / \log T$
for each $\rho' \in S$. 

By Proposition 2, we have for at least half of the $\rho' \in S'$,
\begin{equation}\label{cineq}
|\gamma' - \gamma_c| \leq |\rho' - \rho_c| \leq \frac{C \sqrt{ A \varepsilon
\log (\varepsilon)^{-1}}}{\log T}.
\end{equation}
with $C > 0$ an absolute constant. 
We call $S''$ the subset of $S'$ satisfying the above inequality.
Since $|\gamma_c^{\pm} - \gamma_c| > \varepsilon^{1/2 - \delta}
/ \log T$ for each $\rho' \in S''$ 
the interval $|\gamma' - t| \leq \varepsilon^{1/2 - \delta}
/ \log T$ contains exactly one ordinate of a zero of $\zeta(s)$ (namely 
$\gamma_c$) once $\varepsilon$ is choosen so small so
as to make the right-hand side of (\ref{cineq}) less than
$\varepsilon^{1/2 - \delta} / \log T$
(for example $\varepsilon < (\delta / CA)^{2/\delta}$ 
would suffice). 

 Using Lemma 5, we have at $s = \rho' \in S$, 
$$
\sum_{|s - \rho| < c / \log T} \frac{1}{s - \rho}
\ll \frac{\log T}{c} \cdot |\mathcal{E}_{T,N}(s)|
$$
Choose $s = \rho' \in S''$, $c = \varepsilon^{1/2 - \delta}$
and $N$ the largest integer such that $N^{k} \leq T / \log T$ 
with a $k$ to be fixed later (ultimately $k = \lceil (A + 1)/\delta \rceil$). 
By our previous remark the left-hand side of the above
expression consists of only one term $(\rho' - \rho_c)^{-1}$. 
Raising the above expression to the $2k$-th power and
then summing over all $\rho' \in S''$ we obtain
\begin{align} \label{mainer}
\sum_{\rho' \in S'} \frac{1}{|\rho' - \rho_c|^{2k}}
& \ll \varepsilon^{-k + 2k\delta} \cdot (C \log T)^{2k}
\sum_{\rho' \in S'} |\mathcal{E}_{T,N}(\rho')|^{2k} \\ 
& \ll \varepsilon^{-k + 2k\delta} \cdot ((C k)^{2k} / \varepsilon^{1/2 - \delta}) 
\cdot T (\log T)^{2k + 1} 
\end{align}
by Lemma 5, 
with $C > 0$ an absolute constant (not necessarily the same in each
occurence). Since for each $\rho' \in S''$ we have,
$$
|\rho' - \rho_c | \ll \frac{\sqrt{A \varepsilon \log (\varepsilon)^{-1}}}{\log T}
$$
the left-hand side of (\ref{mainer}) is at least
\begin{align} \label{mainer2}
\sum_{\rho' \in S'} \frac{1}{|\rho' - \rho_c|^{2k}} & \gg
|S''| \cdot (C/A)^k \cdot 
\varepsilon^{-k} (\log (\varepsilon)^{-1})^{-k} (\log T)^{2k} \\
& \gg (C/A)^k \cdot \varepsilon^{A - k} \cdot (\log (\varepsilon)^{-1})^{-k} \cdot 
T (\log T)^{2k + 1}
\end{align}
since $|S''| \gg \varepsilon^{A} T \log T$. 
Combining the upper bound (\ref{mainer}) and the lower bound (\ref{mainer2}) 
we get
$$
\varepsilon^{A - k} (\log (\varepsilon)^{-1})^{-k} 
\leq \varepsilon^{-k - 1/2 + (2k + 1)\delta } \cdot (C A k)^{2k}
$$
with $C > 0$ an absolute constant.
The above inequality simplifies to
$$
\varepsilon^{A + 1/2}  
\leq (C A k)^{2k} \cdot \varepsilon^{(2k + 1)\delta} \cdot (\log (\varepsilon)^{-1}
)^{k}.
$$
Using the inequality $(\log x) \leq x^{\delta} / \delta$ we obtain
$$
\varepsilon^{A + 1/2} \leq (C A k / \delta)^{2k} \cdot \varepsilon^{k \delta}
$$
Choosing $k$ to be the smallest integers with $k \delta > A + 1$ we obtain
a contradiction once $\varepsilon < (2C \delta^2 / A^2)^{16A / \delta}$
with $C$ an absolute constant.  
(Note: We have certainly not tried to optimize the constant $C(\delta, A)$). 
\end{proof}

\section{Proof of Theorem \ref{thm2}.}
Let $T$ be large. By assumption each interval $[T;2T]$
contains at least $c \varepsilon^{A} N(T)$ ordinates $T \leq \gamma'
\leq 2T$ with 
$\beta' - \tfrac 12 < \varepsilon / \log T$.
If $\rho' = \rho$
for more than half of these zeros of $\zeta'(s)$, 
then we have $\geq (c/2)\varepsilon^{A} N(T)$
zeros $\rho$ with $\gamma^{+} = \gamma$ and so we are done.

Thus we can assume that there are $\geq (c/2) \varepsilon^{A}
N(T)$ zeros $\rho'$ with $T \leq \gamma' \leq 2T$, $\rho' \neq \rho$
and $\beta' - \tfrac 12 < \varepsilon / \log T$. We call the
set of such $\rho'$ by $S$. By Lemma 7
between any two consecutive zeros of $\zeta(s)$ there is at most one
$\rho' \in S$.
For each $\rho' \in S$ consider
two possibilities
\begin{enumerate}
\item $|\gamma_c^{\pm} - \gamma_c| \leq \varepsilon^{1/2 - \delta} / \log T$
\item $|\gamma_c^{\pm} - \gamma_c| > \varepsilon^{1/2 - \delta} / \log T$
\end{enumerate}
Call $S_2$ the subset of $S$ for which the second possibility holds. 
If the second possibility holds for at least one half of the elements in $S$
then $|S_2| \geq (c/2) \varepsilon^{A} T \log T$. But this is impossible
by Proposition 1 once $\varepsilon$ is less than $(c/4) C(\delta,A + 1)$,
with $C(\delta,A)$ as in the statement of Proposition 1.  
Therefore the second possibility can hold for \textit{at most}
one half of the elements in $S$. Hence the first possibility holds
for \textit{at least} a half of the elements in $S$. Call $S_1$
the subset of $S$ for which the first possibility holds. 

By Lemma 7, there are no two $\rho' \in S_1$ lying between the same tuple of
consecutive zeros of $\zeta(s)$. 
%Skipping every second $\rho' \in S_1$
%we thus obtain a sequence of at least $(1/2)|S_1|$ zeros of $\zeta(s)$
%with the property that $(\gamma^{+} - \gamma)\log \gamma \leq \varepsilon^{1/2 %- \delta}$
Every $\rho' \in S_1$ lies either between $[\gamma_c^{-},\gamma_c]$
or $[\gamma_c;\gamma_c^+]$ and moreover one of these
intervals is of length $\leq \varepsilon^{1/2 - \delta} / \log \gamma_c$. 
Skipping every second $\rho' \in S_1$ we make sure that no
two $\rho_1 \in S_1$ and $\rho_2 \in S_1$ lie between
the same set of consecutive zeros. Therefore every second
$\rho' \in S_1$ gives rise to one (new) zero $\gamma$ (namely
$\gamma_c$ or $\gamma_c^-$)
with $(\gamma^+ - \gamma) \log \gamma \leq \varepsilon^{1/2 - \delta}$. 
Thus we have at least $(1/2) |S_1| \geq (c/8) \varepsilon^{A} \cdot T \log T$
zeros $T \leq \gamma \leq 2T$ such that $(\gamma^+ - \gamma) \log \gamma
\leq \varepsilon^{1/2 - \delta}$.

\section{Lemma: Zeros of the Riemann zeta-function}

In this section we collect a few facts concerning the zeros of 
the Riemann zeta-function. They will be used in the proof of
Theorem 2 and Corollary 1. We first need Gonek's lemma.
\begin{lemma}[Gonek \cite{Gonek}]
If $x = a/b \neq 1$ and
$a,b \leq N$, then,
$$
\sum_{T \leq \gamma \leq 2T} x^{i\gamma} \ll N \log^2 T
$$
\end{lemma}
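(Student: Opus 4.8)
The plan is to prove this by contour integration, in the spirit of Landau's formula $\sum_{0<\gamma\le T}y^{\rho}=-\tfrac{T}{2\pi}\Lambda(y)+O(\log T)$ and its effective refinement by Gonek. Since we assume the Riemann Hypothesis every zero is $\rho=\tfrac12+i\gamma$, so $x^{i\gamma}=x^{\rho-1/2}$. Put $\sigma_1=1+1/\log N$ and let $\mathcal R$ be the rectangle with vertices $\sigma_1+iT$, $\sigma_1+2iT$, $(1-\sigma_1)+2iT$, $(1-\sigma_1)+iT$, where we first shift the heights $T,2T$ by $O(1)$ so that the horizontal lines stay at distance $\gg 1/\log T$ from every ordinate (this changes the sum by $O(\log T)$). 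The only singularities of $\tfrac{\zeta'}{\zeta}(s)\,x^{s-1/2}$ inside $\mathcal R$ are the zeros with $T\le\gamma\le 2T$ (the pole at $s=1$ and the trivial zeros lie outside), so by the residue theorem
$$\sum_{T\le\gamma\le 2T}x^{i\gamma}=\frac{1}{2\pi i}\oint_{\mathcal R}\frac{\zeta'}{\zeta}(s)\,x^{s-1/2}\,ds$$
(with multiplicity, which is harmless). Replacing $x$ by $1/x$ and conjugating, we may assume $x=a/b>1$; it then remains to estimate the four sides of $\mathcal R$.

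On the right side $\Re s=\sigma_1$ expand $\tfrac{\zeta'}{\zeta}(s)=-\sum_{n\ge 2}\Lambda(n)n^{-s}$ and integrate termwise, so that the $n$-th term carries $\int_T^{2T}(x/n)^{it}\,dt$. If $x\in\mathbb Z$ the term $n=x$ produces the genuine main term $-\tfrac{T}{2\pi}\Lambda(x)/\sqrt x$ of Gonek's formula, which is $O(T)$ in size and which in applications either vanishes (when $x$ is not a prime power) or is extracted separately; the asserted inequality is precisely a bound for the complementary error. For every other $n$ one has $\int_T^{2T}(x/n)^{it}\,dt\ll 1/|\log(x/n)|$. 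The crucial arithmetic input is that for the integer $n_0\ne x$ nearest to $x=a/b$ one has $|n_0-x|\ge 1/b$, hence $|\log(x/n_0)|\gg|n_0-x|/x$ and, using the identity $bx=a\le N$,
$$\frac{1}{|\log(x/n_0)|}\ll bx\le N;$$
weighting by $\Lambda(n_0)n_0^{-\sigma_1}\ll(\log x)/x$ and by the front factor $x^{\sigma_1-1/2}\ll\sqrt x$ shows this near-resonant term contributes $\ll N$. The $j$-th nearest integer ($j\ge1$) has $|\log(x/n)|\asymp j/x$ and contributes $\ll\sqrt x\,(\log N)/j$, summing to $\ll\sqrt N\log^2 N$, while all $n$ bounded away from $x$ contribute $\ll\sqrt x\sum_n\Lambda(n)n^{-\sigma_1}\ll\sqrt N\log N$. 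Thus the right side is $(\text{main term})+O(N\log^2 T)$.

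On the left side $\Re s=1-\sigma_1<0$ use the functional equation in the form $\tfrac{\zeta'}{\zeta}(s)=-\log\tfrac{|t|}{2\pi}-\tfrac{\zeta'}{\zeta}(1-s)+O(1/|t|)$. Since $\Re(1-s)=\sigma_1>1$, the term $-\tfrac{\zeta'}{\zeta}(1-s)=\sum_n\Lambda(n)n^{s-1}$ gives integrals $\int_T^{2T}(nx)^{it}\,dt\ll 1$ because $nx>2$ forces $|\log(nx)|\ge\log 2$, and the resulting sum is $\ll\log N$. The archimedean term $-\log\tfrac{|t|}{2\pi}$ contributes $\ll(\log T)/|\log x|$ after one integration by parts, and $|\log x|=\log(a/b)\ge\log(1+1/b)\gg 1/N$, so this is $\ll N\log T$. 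Hence the left side is $\ll N\log T$. Finally, on the horizontal sides $\tfrac{\zeta'}{\zeta}(\sigma+iT)\ll\log^2 T$ uniformly for $1-\sigma_1\le\sigma\le\sigma_1$ (from $\tfrac{\zeta'}{\zeta}(s)=\sum_{|s-\rho|<1}(s-\rho)^{-1}+O(\log T)$, the zero-avoidance of $T,2T$, and the functional equation for $\sigma<\tfrac12$), while $|x^{s-1/2}|\ll\sqrt N$ there, so each contributes $\ll\sqrt N\log^2 T$. Adding the four sides yields $\sum_{T\le\gamma\le 2T}x^{i\gamma}=(\text{main term})+O(N\log^2 T)$.

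\textbf{Main obstacle.} The delicate point is the two vertical sides, and specifically the near-resonant term $n=n_0$ on the right: a crude bound of the shape $|\log(x/n_0)|\gg 1/N^{2}$ would be useless, and one genuinely needs the identity $bx=a\le N$ — i.e. that both numerator and denominator of $x$ are $\le N$ — to keep that term within an $O(N)$ budget. A secondary technicality is selecting the heights so that $\tfrac{\zeta'}{\zeta}$ stays $\ll\log^2 T$ on the horizontal sides, and tracking (or, in applications, removing) the true main term $-\tfrac{T}{2\pi}\Lambda(x)/\sqrt x$ that appears whenever $x$ or $1/x$ is a prime power: the stated inequality is really the error term in Gonek's asymptotic formula.
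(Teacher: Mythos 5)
Your proof is correct, but it takes a more self-contained route than the paper does. The paper does not rederive Gonek's formula at all: it simply cites Ford and Zaharescu's effective form of the Landau--Gonek explicit formula,
\[
\sum_{T\le\gamma\le 2T}x^{1/2+i\gamma}
=-\frac{\Lambda(n_x)}{2\pi}\,\frac{e^{iT\log(x/n_x)}-1}{i\log(x/n_x)}
+O\Bigl(x\log^2(2xT)+\tfrac{\log 2T}{\log x}\Bigr),
\]
and then finishes in one line with the arithmetic observation that, since $x\ne n_x$, the quantity $bn_x$ can be no closer to $a$ than $a\pm1$, so $|\log(x/n_x)|\gg a^{-1}\gg N^{-1}$; dividing through by $\sqrt{x}$ then gives the stated bound. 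You instead rebuild this formula from scratch by integrating $\tfrac{\zeta'}{\zeta}(s)\,x^{s-1/2}$ around a rectangle with vertical sides at $\Re s=1\pm1/\log N$: this buys self-containment, at the price of redoing all the standard bookkeeping (choosing heights to avoid zeros, the functional equation and Stirling on the left edge, bounding $\zeta'/\zeta$ by $\log^2T$ on the horizontal edges, and a dyadic split of the $n$-sum on the right). The decisive arithmetic input is identical in the two arguments: $x=a/b$ with $a,b\le N$ forces the nearest integer $n_0$ to satisfy $|x-n_0|\ge 1/b$, hence $1/|\log(x/n_0)|\ll a\le N$. One genuinely useful thing you add, and which the paper glosses over, is the observation that the statement is literally false when $b=1$ and $x$ is a prime power: then $n_x=x$, the ``main term'' $-\tfrac{T}{2\pi}\Lambda(x)/\sqrt{x}$ survives and has size $\asymp T$, dwarfing $N\log^2T$. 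The paper's proof quietly assumes ``$x$ is not an integer,'' which does hold in its sole application (the off-diagonal ratios $p_1\cdots p_k/(q_1\cdots q_k)$ in Lemma 9 are never integers, since both products have exactly $k$ prime factors with multiplicity), but strictly speaking that hypothesis should appear in the statement of the lemma.
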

\begin{proof}
As noted by Ford and Zaharescu (Lemma 1, \cite{FordZaharescu}), 
it follows from Gonek's work that,
$$
\sum_{T \leq \gamma \leq 2T} x^{1/2 + i\gamma} =
-\frac{\Lambda(n_x)}{2\pi} \frac{e^{i T \log (x/n_x)} -1}{i \log (x/n_x)}
+ O \bigg ( x \log^2 (2 x T) + \frac{\log 2T}{\log x} \bigg ).
$$
Since $x$ is not an integer we have $x \neq n_x$. Therefore
the closest that $|x/n_x| = |a / (b n_x)|$ can be to $1$ is when
$b n_x $ is equal to $a \pm 1$. This shows that
$|\log (x/n_x)| \gg a^{-1} \gg N^{-1}$. Therefore the main
term in the above equation is bounded by $N \log T$, 
This gives a bound of $
\sum_{T \leq \gamma \leq 2T} x^{i \gamma}
\ll N / \sqrt{x} \log T
 + \sqrt{x} \log^2 T
$ for $x > 1$. For $x < 1$ this bound is reversed
to $\sqrt{x} N \log T + \log^2{T} / \sqrt{x}$. In either case the final
bound is $\ll N \log^2 T$ because $N^{-1} \leq |x| \leq N$. 
\end{proof}
An quick consequence of the above lemma is a bound for Dirichlet
polynomials.
\begin{lemma}
Let $B_N(s)$
be as in Lemma 2. If $N^k \leq \sqrt{T}$ then,
$$
\sum_{T \leq \gamma \leq 2T} |B_N(\tfrac 12 + i\gamma)|^{2k} \ll
(C k)^{k} \cdot T \log T \cdot (\log N)^{2k} 
$$
for some absolute constant $C > 0$. 
\end{lemma}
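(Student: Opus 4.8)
The plan is to expand the $2k$-th power of $B_N(\tfrac12+i\gamma)$ as a Dirichlet polynomial in the variable $x$ of length at most $N^{2k}$ (a product of $2k$ factors of $B_N$), and then sum term-by-term over the zeros $T\le\gamma\le2T$ using Gonek's bound from Lemma 9. Explicitly, writing
$$
B_N(\tfrac12+i\gamma)=\sum_{n\le N}\frac{b(n)}{n^{1/2+i\gamma}},\qquad b(n)=\Lambda(n)\Bigl(1-\frac{\log n}{\log N}\Bigr)\ge0,
$$
we have $|B_N|^{2k}=B_N^{k}\overline{B_N}^{k}$, which upon multiplying out becomes $\sum_{m,n} c(m)c(n)(m/n)^{-i\gamma}(mn)^{-1/2}$, where $m,n$ run over integers up to $N^{k}$ and $c(m)=\sum_{m_1\cdots m_k=m}b(m_1)\cdots b(m_k)\ge0$. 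Summing over $\gamma$, the diagonal terms $m=n$ contribute $N(2T)-N(T)\asymp T\log T$ per pair times the coefficient $c(m)^2/m$, while the off-diagonal terms with $m/n=a/b\ne1$ are handled by Lemma 9: since $m,n\le N^k\le\sqrt T$, we may take "$N$" in Lemma 9 to be $N^k\le\sqrt T$, giving $\sum_{T\le\gamma\le2T}(m/n)^{i\gamma}\ll N^{k}\log^2 T$ for each such pair.

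The next step is to bound the two resulting pieces. For the diagonal, $\sum_{m\le N^k}c(m)^2/m$ is at most $\bigl(\sum_{m\le N^k}c(m)/m\bigr)\cdot\max_m c(m)$ — or, more cleanly, one recognizes $\sum_m c(m)^2/m$ as related to the mean square of $B_N$ and bounds it directly: $\sum_{m\le N^k}c(m)^2 m^{-1}\le\bigl(\sum_{n\le N}\Lambda(n)/n\bigr)^{2k}\ll(\log N)^{2k}$ after using $1-\log n/\log N\le1$ and Mertens; a cleaner route is to invoke the same majorant/moment estimate already used in Lemma 4 (Soundararajan's Lemma 3 in \cite{Soundararajan2}) to get $\sum_m c(m)^2/m\ll k!(\log N)^{2k}$, so the diagonal contributes $\ll k!\,(\log N)^{2k}\,T\log T\ll(Ck)^k T\log T(\log N)^{2k}$ since $k!\le k^k\le(Ck)^k$. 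For the off-diagonal, the total is at most $N^k\log^2 T$ times $\sum_{m,n\le N^k}c(m)c(n)(mn)^{-1/2}=\bigl(\sum_{n\le N}b(n)n^{-1/2}\bigr)^{2k}\le\bigl(\sum_{n\le N}\Lambda(n)n^{-1/2}\bigr)^{2k}\ll(C\sqrt N\,)^{2k}=(C^2 N)^k$ by Chebyshev-type prime-sum bounds; hence the off-diagonal is $\ll N^k\log^2 T\cdot(C^2N)^k=(C^2 N^2)^k\log^2 T$. Since $N^{2k}\le T$, this is $\ll T\log^2 T$, which is absorbed into the main term $(Ck)^k T\log T(\log N)^{2k}$ (indeed it is much smaller).

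Combining the two pieces yields $\sum_{T\le\gamma\le2T}|B_N(\tfrac12+i\gamma)|^{2k}\ll(Ck)^k T\log T(\log N)^{2k}$ as claimed. The main obstacle — really the only delicate point — is getting the right shape of the diagonal bound: a naive estimate $\sum_m c(m)^2/m\le(\sum_n\Lambda(n)/n)^{2k}\ll(\log N)^{2k}$ loses the combinatorial factor and would actually be too \emph{strong} to be correct for a sum of squares (the divisor-type coefficient $c(m)$ can be as large as $k^{\Omega(m)}$-ish), so one must be careful to either invoke Soundararajan's moment lemma to legitimately produce the $k!$ (equivalently $(Ck)^k$) factor, or argue via the mean square $\int_{-2T}^{2T}|B_N|^{2k}\,dt\ll k!\,T(\log N)^{2k}$ and note that replacing the integral by the sum over zeros costs only the density $N(2T)-N(T)\asymp T\log T$ of zeros together with an interval of length $\asymp1/\log T$ worth of $t$-values — exactly the mechanism of Lemma 3 and Lemma 4, applied now with Gonek's explicit off-diagonal cancellation in place of the large sieve since here $N^k\le\sqrt T$ is small enough.
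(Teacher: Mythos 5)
Your overall structure --- expanding $|B_N(\tfrac12+i\gamma)|^{2k}$ as a double Dirichlet sum, splitting into diagonal and off-diagonal, and using Gonek's bound for the off-diagonal pieces (with $m,n\le N^k\le\sqrt T$) --- is exactly the paper's approach, and your off-diagonal estimate matches the paper's. The one genuine point of divergence is the diagonal. The paper first strips out the higher prime powers $p^j$, $j\ge 2$, showing by a contour integration against $\zeta'/\zeta(s+1+2it)$ that their contribution to $B_N(\tfrac12+it)$ is $O(\log\log T)$ uniformly; what remains is a Dirichlet polynomial over \emph{primes} only, and then the diagonal equation $p_1\cdots p_k=q_1\cdots q_k$ among primes has at most $k!$ solutions for each fixed $(q_1,\dots,q_k)$ by unique factorization, which yields the $k!(\sum_p|a(p)|^2)^k\ll(Ck)^k(\log N)^{2k}$ bound directly and elementarily. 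You instead retain the full von Mangoldt weight and appeal to Soundararajan's moment lemma (Lemma 3 of \cite{Soundararajan2}, together with a Montgomery--Vaughan mean value theorem to pass from the integral $\int_{-2T}^{2T}|D_N(\tfrac12+it)|^{2k}\,dt$ to the coefficient sum $\sum_m c(m)^2/m$). This route is also valid --- the hypothesis $N^k\le\sqrt T$ is more than strong enough --- and it avoids the prime-power reduction, at the cost of invoking a stronger black box where the paper gets by with a one-line combinatorial count.

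One flag on exposition: the inequality $\sum_{m\le N^k}c(m)^2 m^{-1}\le\bigl(\sum_{n\le N}\Lambda(n)/n\bigr)^{2k}\ll(\log N)^{2k}$ that you write in the middle of the diagonal argument is false (the left side genuinely carries a $k!$-type factor coming from the $\asymp k!$ matchings in the diagonal, as you yourself observe in the closing paragraph). Presenting the wrong bound and only retroactively disavowing it leaves the proof internally inconsistent as written; delete that sentence and keep only the Soundararajan-moment-lemma route (or the paper's prime-power reduction), either of which gives the correct $k!(\log N)^{2k}\ll(Ck)^k(\log N)^{2k}$.
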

\begin{proof}
First notice that for $T \leq t \leq 2T$
\begin{align*}
\sum_{\substack{p^k \leq N \\ k > 1}} \frac{\log p}{p^{k/2 + k i t}} \cdot
\bigg ( 1 - \frac{\log p^k}{\log N} \bigg )
& = \frac{1}{2\pi i}
\int_{2 - i\infty}^{2 + i\infty}
\frac{\zeta'}{\zeta}(s + 1 + 2 i t)
\cdot \frac{N^{s/2} ds}{s^2 \log \sqrt{N}} + O(1) \\
& = - \frac{N^{- it}}{2 t^2 \log N} + \frac{\zeta'}{\zeta} ( 1 + 2 it)
+ O \bigg ( 1 + \frac{\log T}{\log N} \cdot N^{-1/8} \bigg )
\end{align*}
and that the above expression is less than $\ll \log\log T$ by a classical
estimate for the size of $\zeta'/\zeta$ on the Riemann Hypothesis. Therefore,
$$
\sum_{T \leq \gamma \leq 2T} |B_N(\tfrac 12 + i\gamma)|^{2k}
\ll C^{k} \sum_{T \leq \gamma \leq 2T} \bigg | \sum_{p \leq N} \frac{\log p}{p^{1/2 + i\gamma}} \cdot \bigg ( 1 - \frac{\log p}{\log N} \bigg ) \bigg |^{2k} + 
T \log T \cdot (C \log\log T)^{2k}.
$$
with $C > 0$ some absolute constant. We denote the coefficients of the
Dirichlet polynomial over primes by $a(p)$. We have,
\begin{align*}
\sum_{T \leq \gamma \leq 2T} \bigg | \sum_{p \leq N} a(p)p^{-i\gamma} \bigg |^{2k}
& = \sum_{\substack{p_1,\ldots,p_k \leq N \\ q_1, \ldots, q_k \leq N}}
a(p_1)\ldots a(p_k) a(q_1) \ldots a(q_k)
\sum_{T \leq \gamma \leq 2T} \bigg ( \frac{p_1 \ldots p_k}{q_1 \ldots q_k}
\bigg )^{i\gamma}
\end{align*}
The diagonal terms $p_1 \ldots p_k = q_1 \ldots q_k$ contribute at most
$$
\ll T \log T \cdot k! \cdot \bigg ( 2 \sum_{p \leq N} |a(p)|^2 
\bigg )^{k} \ll (C k)^{k} T \log T \cdot (\log N)^{2k}
$$
because given $q_1,\ldots,q_k$ all the solutions to the
equation $p_1 \ldots p_k
= q_1 \ldots q_k$ are obtained by pairing together
each prime $p_i$ with some other prime $q_j$, and there is at most
$k!$ such pairings. To bound the off-diagonal terms $p_1 \ldots p_k
\neq q_1 \ldots q_k$ we notice that $p_1 \ldots p_k \leq N^{k} \leq \sqrt{T}$
and similarly that $q_1 \ldots q_k \leq N^k \leq \sqrt{T}$. Therefore
by Gonek's lemma
$$
\sum_{T \leq \gamma \leq 2T} \bigg ( \frac{p_1 \ldots p_k}{q_1 \ldots q_k}
\bigg )^{i\gamma} \leq \sqrt{T} \log^2 T.
$$
Since
$\sum_{p \leq N} a(p) \ll \sqrt{N}$ it follows that 
the off-diagonal terms contribute at most $C^{k} N^k \cdot \sqrt{T} \log^2 T
\ll C^k T \log^2 T$, 
which is less than the main term as soon as $k > 0$
\end{proof}
An immediate consequence of the above lemma is the following.
\begin{lemma}
Let $T \leq t \leq 2T$. Then,
$$
\sum_{T \leq \gamma \leq 2T} \bigg | N \big ( \gamma + \frac{2\pi}{\log T}
\big ) - N \big ( \gamma - \frac{2\pi}{\log T} \big ) \bigg |^{2k}
\ll (C k)^{2k} \cdot T \log T.
$$
with $C > 0$ an absolute constant. 
\end{lemma}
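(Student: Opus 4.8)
The plan is to deduce the bound from the preceding lemma together with Lemma~\ref{explicit}. The one choice to make is the length $N$ of the auxiliary Dirichlet polynomial $B_N$: we need $N \le \sqrt{T}$, so that a single window of radius $\pi/\log N$ contains the window of radius $2\pi/\log T$ appearing in the statement, and we also need $N^k \le \sqrt{T}$, so that the preceding lemma applies to the $2k$-th moment of $B_N$. The latter is the more restrictive condition, so I would take $N$ to be the largest integer with $N^k \le \sqrt{T}$. Then $N \le \sqrt{T}$, and $\log N = (\log T)/(2k) + o(1)$, so in particular $\log T/\log N \ll k$. (This last point is where one implicitly uses that $k$ is at most of order $\log T$, the only regime of interest, since it is what makes $N\to\infty$ as $T\to\infty$.)

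With this choice of $N$, Lemma~\ref{explicit} applied at each ordinate $T \le \gamma \le 2T$ gives, using $\pi/\log N \ge 2\pi/\log T$,
$$
N\big(\gamma + \tfrac{2\pi}{\log T}\big) - N\big(\gamma - \tfrac{2\pi}{\log T}\big)
\le N\big(\gamma + \tfrac{\pi}{\log N}\big) - N\big(\gamma - \tfrac{\pi}{\log N}\big)
\ll \frac{\log T}{\log N} + \frac{|B_N(\tfrac12 + i\gamma)|}{\log N}
\ll k + \frac{|B_N(\tfrac12 + i\gamma)|}{\log N}.
$$
Raising to the $2k$-th power and using $(a+b)^{2k} \le 2^{2k}(a^{2k}+b^{2k})$ for $a,b\ge 0$, one obtains
$$
\Big| N\big(\gamma + \tfrac{2\pi}{\log T}\big) - N\big(\gamma - \tfrac{2\pi}{\log T}\big)\Big|^{2k}
\ll C^{2k} k^{2k} + \frac{C^{2k}}{(\log N)^{2k}}\,\big|B_N(\tfrac12 + i\gamma)\big|^{2k}
$$
with $C>0$ an absolute constant.

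It then remains to sum over the zeros with $T \le \gamma \le 2T$. Since there are $\ll T\log T$ of them, the first term contributes $\ll (Ck)^{2k} T\log T$. For the second term, since $N^k \le \sqrt{T}$ the preceding lemma yields $\sum_{T \le \gamma \le 2T} |B_N(\tfrac12 + i\gamma)|^{2k} \ll (Ck)^k\, T\log T\, (\log N)^{2k}$; dividing through by $(\log N)^{2k}$ and multiplying by $C^{2k}$ shows that this term also contributes $\ll (Ck)^{2k} T\log T$. Adding the two contributions gives the claim. There is no genuine obstacle in this argument; the only things to watch are picking $N$ so that both constraints $N \le \sqrt T$ and $N^k \le \sqrt T$ hold, and the routine bookkeeping of the constant $C$, which I allow to change from line to line.
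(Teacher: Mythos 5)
Your proof is correct and follows essentially the same route as the paper: choose $N$ to be the largest integer with $N^k \le \sqrt{T}$ (so that $N \le \sqrt{T}$ makes the window comparison valid and Lemma 8 applies), bound the zero count by Lemma 2, raise to the $2k$-th power, and split into the $\ll k$ term and the $|B_N|^{2k}$ term handled by Lemma 8. The only difference is that you make the bookkeeping ($\log T/\log N \ll k$, the elementary $(a+b)^{2k}$ inequality) explicit where the paper leaves it implicit.
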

\begin{proof}
Let $N$ be the largest integer such that $N^k \leq \sqrt{T}$. We have
\begin{align*}
N \big ( \gamma + \frac{2\pi}{\log T} \big ) - N \big ( \gamma - \frac{2\pi}
{\log T} \big ) & \leq N \big ( \gamma + \frac{\pi}{\log N} \big )
- N \big ( \gamma - \frac{\pi}{\log N} \big ) \\
& \ll \frac{\log T}{\log N} + \frac{|B_N(\tfrac 12 + i\gamma)|}{\log N}
\end{align*}
by Lemma 2. Raising the above expression to the $2k$-th power
and then summing over all $T \leq \gamma \leq 2T$ we obtain
$$
\sum_{T \leq \gamma \leq 2T} \bigg | N \big ( \gamma + \frac{2\pi}{\log T}
\big ) - N \big ( \gamma - \frac{2\pi}{\log T} \big ) \bigg |^{2k}
\ll (C k)^{2k} \cdot T \log T + \frac{C^{2k}}{(\log N)^{2k}} 
\sum_{T \leq \gamma \leq 2T} 
|B_N(\tfrac 12 + i\gamma)|^{2k}
$$
with $C > 0$ an absolute constant.
By the previous lemma the sum over $T \leq \gamma \leq 2T$ is
$\ll (C k)^{k} \cdot T \log T \cdot (\log N)^{2k}$ and so the claim follows.
\end{proof}

\begin{corollary}
Let $A > 0$ and $\delta > 0$ be given. 
If $0 < \varepsilon < C(\delta,A)$, with $C(\delta, A)$ depending
only on $\delta$ and $A$, then,
$$
\# \bigg \{ T \leq \gamma \leq 2T : 
N\big ( \gamma + \frac{2\pi}{\log T} \big ) - N \big ( \gamma - 
\frac{2\pi}{\log T} \big ) > \varepsilon^{-\delta}
\bigg \} \leq \varepsilon^{A + 1} \cdot T \log T.
$$
\end{corollary}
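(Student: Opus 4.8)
The plan is to deduce this from the $2k$-th moment bound of the previous lemma by a Chebyshev-type argument, choosing the integer $k$ in terms of $A$ and $\delta$ alone. Abbreviate $\mathcal{N}(\gamma) := N(\gamma + 2\pi/\log T) - N(\gamma - 2\pi/\log T)$. For any fixed positive integer $k$ the previous lemma gives an absolute constant $C$ with
\[
\sum_{T \le \gamma \le 2T} \mathcal{N}(\gamma)^{2k} \le (Ck)^{2k}\, T\log T ,
\]
where any absolute implied constant has been folded into $C$ (legitimate since $k \ge 1$). Hence, by Markov's inequality, the number of $\gamma\in[T,2T]$ with $\mathcal{N}(\gamma) > \varepsilon^{-\delta}$ is at most
\[
\frac{(Ck)^{2k}}{\varepsilon^{-2k\delta}}\, T\log T = \big(Ck\,\varepsilon^{\delta}\big)^{2k}\, T\log T .
\]

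First I would fix $k$ to be the least positive integer with $2k\delta \ge A+2$; this is legitimate because it depends only on $A$ and $\delta$, which is exactly the role played by the hypothesis $\varepsilon < C(\delta,A)$. Since $\varepsilon < 1$ we then have $\varepsilon^{2k\delta}\le \varepsilon^{A+2}$, so the count above is at most $(Ck)^{2k}\varepsilon^{A+2}\,T\log T$. It therefore suffices to demand that $\varepsilon$ be small enough that $(Ck)^{2k}\varepsilon \le 1$, i.e.
\[
\varepsilon < C(\delta,A) := (Ck)^{-2k}, \qquad k = \big\lceil (A+2)/(2\delta)\big\rceil ,
\]
and then the number of exceptional $\gamma$ is $\le \varepsilon^{A+1}\,T\log T$, as claimed.

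There is no real obstacle here: the statement is a direct quantitative consequence of the moment bound, and the only point needing a little care is that $k$ must be chosen once and for all as a function of $A$ and $\delta$ (so that $(Ck)^{2k}$ is a genuine constant absorbable into $C(\delta,A)$), which is possible because we only need $2k\delta > A+1$, for which any sufficiently large fixed $k$ works. If one wanted, unwinding $k=\lceil(A+2)/(2\delta)\rceil$ yields a value of $C(\delta,A)$ of roughly the shape $(c\delta/A)^{cA/\delta}$, in line with the constants appearing elsewhere in the paper.
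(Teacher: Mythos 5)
Your argument is correct and follows essentially the same route as the paper: apply the $2k$-th moment bound from the preceding lemma, run Markov's inequality, and fix $k$ in terms of $A$ and $\delta$ alone so that the factor $(Ck)^{2k}$ becomes a genuine constant absorbable into the smallness condition on $\varepsilon$. Your bookkeeping is in fact slightly cleaner than the paper's (reserving one extra power of $\varepsilon$ to soak up $(Ck)^{2k}$ cleanly yields the stated exponent $A+1$, whereas the paper's choice $k=\lceil A/\delta\rceil$ nominally lands on $\varepsilon^{A}$), but the idea is identical.
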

\begin{proof}
By the previous lemma we have for $k > 1$, 
$$
\sum_{T \leq \gamma \leq 2T} \bigg | N \big ( \gamma + \frac{2\pi}{\log T} 
\big ) - N \big ( \gamma - \frac{2\pi}{\log T} \big ) \bigg |^{2k}
\ll (C k)^{2k} \cdot T \log T
$$
with $C > 0 $ a positive absolute constant. 
Therefore the number of $T \leq \gamma \leq 2T$ for which
the interval $[\gamma - 2\pi / \log T; \gamma + 2\pi / \log T]$
contains more than $\varepsilon^{-\delta}$ zeros
is bounded above by $\varepsilon^{2k \delta} (C k)^{2k} \cdot T \log T$. Choose
$k = \lceil A / \delta \rceil$. Then $\varepsilon^{2k\delta} (C k)^{2k} 
\leq \varepsilon^{A}$ 
provided that $\varepsilon \leq (c A / \delta)^{-4 / \delta})$
with $c > 0$ an absolute constant. 
\end{proof}

\section{Proof of Theorem 2}

We will require the following two lemma.
\begin{lemma}[Zhang \cite{Zhang}]
Let $\varepsilon < 1$. If $\rho = \tfrac 12 + i\gamma$ is a 
zero of $\zeta(s)$ such that $\gamma$ is sufficiently large
and $(\gamma^{+} - \gamma) \log \gamma < \varepsilon$
then there exists a zero $\rho'$ of $\zeta'(s)$ such that
$$
|\rho' - \rho| \leq \frac{2 \varepsilon}{\log \gamma}.
$$
\end{lemma}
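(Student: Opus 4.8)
Write $\delta\assign\gamma^+-\gamma<\varepsilon/\log\gamma$. The plan is to locate the sought zero of $\zeta'$ near the midpoint $m\assign\tfrac12(\rho+\rho^+)$ of the two consecutive zeros by a Rouch\'e argument, controlling the error by the classical estimates for $\zeta'/\zeta$. We may assume $\rho$ is simple, since otherwise $\zeta'(\rho)=0$ and there is nothing to prove (likewise for $\rho^+$). Because $\rho$ and $\rho^+$ are \emph{consecutive}, every zero of $\zeta$ other than $\rho,\rho^+$ has imaginary part outside $(\gamma,\gamma^+)$ and hence lies at distance $>\delta/2$ from $m$; in particular the disc $\{|s-m|\le\delta/3\}$ contains no zero of $\zeta$, so near it we may factor
\[ \zeta(s)=(s-\rho)(s-\rho^+)\,\phi(s) \]
with $\phi$ holomorphic and nowhere vanishing, whence
\[ \zeta'(s)=2(s-m)\,\phi(s)+(s-\rho)(s-\rho^+)\,\phi'(s). \]

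Next I would compare $\zeta'$ with $h(s)\assign 2(s-m)\phi(s)$ on the circle $\mathcal C\assign\{|s-m|=\delta/3\}$, on which $h$ has the single zero $s=m$. There $|h(s)|=\tfrac23\delta\,|\phi(s)|$, while
\[ |\zeta'(s)-h(s)|=|(s-\rho)(s-\rho^+)|\,|\phi'(s)|\le\tfrac{25}{36}\delta^2\,|\phi'(s)|, \]
using $|s-\rho|,|s-\rho^+|\le\tfrac\delta3+\tfrac\delta2$. Hence Rouch\'e's theorem produces a zero $\rho'$ of $\zeta'$ inside $\mathcal C$, so that $|\rho'-\rho|\le\tfrac\delta3+|m-\rho|=\tfrac56\delta<\varepsilon/\log\gamma\le\tfrac{2\varepsilon}{\log\gamma}$, \emph{provided} $\tfrac{25}{36}\delta^2|\phi'(s)|<\tfrac23\delta|\phi(s)|$ on $\mathcal C$, i.e.\ provided
\[ \bigl|(\phi'/\phi)(s)\bigr|<\tfrac{24}{25}\,\delta^{-1}\qquad(s\in\mathcal C). \]
Since $\phi'/\phi=\zeta'/\zeta-(s-\rho)^{-1}-(s-\rho^+)^{-1}$, everything reduces to showing $\zeta'/\zeta(s)-(s-\rho)^{-1}-(s-\rho^+)^{-1}=o(\delta^{-1})$ for $s\in\mathcal C$.

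By Lemma~\ref{Selberg} (or the classical partial-fraction bound for $\zeta'/\zeta$),
\[ \frac{\phi'}{\phi}(s)=\sum_{\substack{\rho''\ne\rho,\rho^+\\ |s-\rho''|<c/\log\gamma}}\frac{1}{s-\rho''}+O(\log\gamma), \]
and by consecutiveness every $\rho''$ in the sum has imaginary part below $\gamma$ or above $\gamma^+$. In the generic situation, when $\zeta$ has no zeros within $O(1/\log\gamma)$ of $\rho$ besides $\rho^+$, the sum is empty and $(\phi'/\phi)(s)=O(\log\gamma)<\tfrac{24}{25}\delta^{-1}$ (as $\delta<\varepsilon/\log\gamma$ and $\varepsilon$ is below an absolute constant), settling the lemma in that case. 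The same picture arises from the Hardy function $Z(t)=e^{i\theta(t)}\zeta(\tfrac12+it)$, which is real with zeros exactly the ordinates $\gamma$: Rolle supplies $\tau\in(\gamma,\gamma^+)$ with $\zeta'/\zeta(\tfrac12+i\tau)=-\theta'(\tau)=-\tfrac12\log\tfrac{\tau}{2\pi}+O(1)$, and since the two polar terms are purely imaginary on the critical line this points to $\tau$ being near $m$ and to $\zeta'/\zeta$ vanishing just to the right of $m$.

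The step I expect to be the real obstacle is the remaining case: $\zeta$ may have zeros clustered just below $\gamma$ (or just above $\gamma^+$) at spacing $\ll\delta$ — which the hypotheses allow, no condition being placed on $\gamma-\gamma^-$ or on gaps beyond $\gamma^+$ — in which case the sum above can be of size $\gg\delta^{-1}\log\gamma$ on $\mathcal C$ and the Rouch\'e inequality fails. To handle it I would instead run the argument principle for $\zeta'/\zeta$ on the circle $|s-\rho|=r$, with $r\in[\tfrac{3\varepsilon}{2\log\gamma},\tfrac{2\varepsilon}{\log\gamma}]$ picked by pigeonhole to stay away from every zero of $\zeta$: if the disc contains $n\ge2$ zeros of $\zeta$, then on the circle $\zeta'/\zeta$ is dominated by $P'/P$, where $P$ is the monic polynomial whose roots are exactly those $n$ zeros (all on the critical line), so by Gauss--Lucas the winding number of $\zeta'/\zeta$ about the circle equals $(n-1)-n=-1$, whence $\zeta'$ has $n-1\ge1$ zeros in the disc, each within $\tfrac{2\varepsilon}{\log\gamma}$ of $\rho$. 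Verifying that $P'/P$ dominates on the pigeonhole circle uniformly in $n$ is the delicate point; with that in place, Lemma~\ref{sound} moreover shows that the zero so produced is the unique zero of $\zeta'$ between $\rho$ and $\rho^+$.
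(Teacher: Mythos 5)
The paper does not prove this lemma; it is cited verbatim from Zhang~\cite{Zhang} and invoked as a black box in the proof of Theorem~2, so there is no ``paper's own proof'' to compare against. I therefore assess your argument on its own terms.

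Your generic case is fine: when the only zeros of $\zeta$ within distance $O(1/\log\gamma)$ of $\rho$ are $\rho$ and $\rho^+$, the partial-fraction expansion gives $\phi'/\phi=O(\log\gamma)$ on the small circle, the Rouch\'e inequality is immediate since $\delta<\varepsilon/\log\gamma$, and you land a zero of $\zeta'$ within $\tfrac56\delta<2\varepsilon/\log\gamma$ of $\rho$. But the statement makes no hypothesis whatsoever on $\gamma-\gamma^-$ or on the gaps above $\gamma^+$, so the clustered case is not a pathology to dismiss --- it is the case the lemma must cover, and it is where your proof stops.

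In that case you propose to run the argument principle on a pigeonholed circle $|s-\rho|=r$ and compare $\zeta'/\zeta$ with $P'/P$, invoking Gauss--Lucas to get winding number $-1$. The framework is reasonable, but the estimate you flag as ``the delicate point'' is a real obstacle, not a formality. The perturbation you must control on the circle is $\sum_{\rho''\notin D,\,|s-\rho''|<c/\log T}(s-\rho'')^{-1}+O(\log\gamma)$. There can be as many as $\gg\log\gamma$ zeros of $\zeta$ within distance $O(1/\log\gamma)$ of $\rho$, so pigeonholing $r$ over an interval of length $\asymp\varepsilon/\log\gamma$ only guarantees separation $\gg\varepsilon/\log^2\gamma$ from the nearest external zero, whence the naive bound on the perturbation is $\gg(\log^2\gamma)/\varepsilon$ at that point of the circle, while the lower bound you have for $|P'/P|$ there is only $\gg n/r\asymp n\log\gamma/\varepsilon$ with $n$ the number of zeros inside --- and $n$ can be $O(1)$. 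So the domination does not follow, and without it the winding-number computation is unjustified. To close this you would need either a cancellation argument (the external terms are not all positively aligned; on the right half of the circle $\mathrm{Re}\,(s-\rho'')^{-1}>0$ for \emph{all} zeros, inside or outside, which suggests tracking $\mathrm{Re}(\zeta'/\zeta)$ around the contour rather than bounding the perturbation pointwise in modulus) or a genuinely different contour. As written, the proof has a genuine gap.
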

\begin{lemma}[Soundararajan \cite{Soundararajan}]
We have,
$$
|\rho' - \rho_c|^2 \geq \frac{2 \big ( \beta' - \tfrac 12 \big )}
{\log \gamma'}.
$$
\end{lemma}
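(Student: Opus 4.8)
The plan is to extract the bound from the partial–fraction expansion of $\zeta'/\zeta$ evaluated at the zero $\rho'$, using the Riemann Hypothesis to discard all terms but the one coming from the nearest zero of $\zeta$. The statement is trivial when $\rho' = \rho$ for some $\rho$ (both sides vanish, since then $\rho_c=\rho'$ and $\beta'=\tfrac12$), so I would assume $\rho'\neq\rho$; then $\zeta(\rho')\neq 0$, and since $\zeta'(\rho')=0$ we get $\tfrac{\zeta'}{\zeta}(\rho')=0$.

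First I would recall the logarithmic derivative of the Hadamard factorisation,
$$
\frac{\zeta'}{\zeta}(s) = B - \frac{1}{s-1} - \frac12\frac{\Gamma'}{\Gamma}\Bigl(\frac s2 + 1\Bigr) + \frac12\log\pi + \sum_{\rho}\Bigl(\frac{1}{s-\rho} + \frac1\rho\Bigr),
$$
together with the classical fact that $\operatorname{Re}B + \sum_\rho \operatorname{Re}\tfrac1\rho = 0$, so that upon taking real parts the constant $B$ cancels the $1/\rho$'s. Evaluating at $s=\rho'$, using $\tfrac{\zeta'}{\zeta}(\rho')=0$, and noting that under the Riemann Hypothesis $\operatorname{Re}\tfrac{1}{\rho'-\rho} = \tfrac{\beta'-1/2}{|\rho'-\rho|^2}$, this gives the identity
$$
\sum_{\rho}\frac{\beta'-\tfrac12}{|\rho'-\rho|^2} = \frac12\operatorname{Re}\frac{\Gamma'}{\Gamma}\Bigl(\frac{\rho'}2+1\Bigr) - \frac12\log\pi + \operatorname{Re}\frac{1}{\rho'-1}.
$$
This is essentially Lemma~\ref{hadamard}, except that one keeps track of the main term rather than writing $O(1)$. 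Now I would bound the right-hand side: Stirling gives $\operatorname{Re}\tfrac{\Gamma'}{\Gamma}(\tfrac{\rho'}2+1) = \log\bigl|\tfrac{\rho'}2+1\bigr| + O(1/\gamma') \le \log(\gamma'/2) + O(1/\gamma')$; the pole term $\operatorname{Re}\tfrac{1}{\rho'-1} = \tfrac{\beta'-1}{|\rho'-1|^2}$ is negative since $\beta'<1$; and $-\tfrac12\log\pi<0$. Hence for $\gamma'$ large enough,
$$
\sum_{\rho}\frac{\beta'-\tfrac12}{|\rho'-\rho|^2} \le \frac12\log\frac{\gamma'}{2\pi} + O(1/\gamma') < \frac12\log\gamma'.
$$
Finally, the Riemann Hypothesis gives $\beta'\ge\tfrac12$, so every term on the left is nonnegative; dropping all of them but the one indexed by $\rho=\rho_c$ yields $\tfrac{\beta'-1/2}{|\rho'-\rho_c|^2} \le \tfrac12\log\gamma'$, which rearranges to the claimed inequality.

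The one step that carries all the weight is the positivity of the individual terms $\tfrac{\beta'-1/2}{|\rho'-\rho|^2}$, which is exactly what licenses throwing away the entire sum except for the nearest zero; this is precisely where RH (in the form $\beta'\ge\tfrac12$) is indispensable, and without it a term with $\beta<\tfrac12$ could be negative and the argument would fail. The rest is routine Stirling bookkeeping, the only mild point being to check that the non-arithmetic terms $-\tfrac12\log\pi$, $\operatorname{Re}\tfrac{1}{\rho'-1}$, and the $-\tfrac12\log 2$ hidden inside $\tfrac12\log(\gamma'/2)$ all push in the favourable direction, so that the denominator comes out as $\log\gamma'$ — and not $\log\gamma'+C$ — for all sufficiently large $\gamma'$.
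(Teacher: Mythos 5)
Your proof is correct and is exactly the classical Hadamard–product argument that underlies Soundararajan's lemma; the paper itself supplies no proof and simply cites \cite{Soundararajan}, so there is nothing to compare against in the source. The three ingredients — taking real parts of the partial-fraction expansion at $s=\rho'$ where $\zeta'/\zeta$ vanishes, using RH (equivalently $\beta'\geq\tfrac12$) so that every term $\tfrac{\beta'-1/2}{|\rho'-\rho|^2}$ is nonnegative and can be discarded save the one at $\rho_c$, and Stirling for $\Gamma'/\Gamma$ — are the expected ones, and you correctly observe that $\tfrac12\log\pi$, the pole term at $s=1$ (negative because $\beta'<1$, which holds for all nontrivial zeros of $\zeta'$ with large ordinate by a classical theorem of Levinson–Montgomery), and the hidden $-\tfrac12\log 2$ all push in the right direction, so the final constant is a clean $\log\gamma'$ with no additive $O(1)$. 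The only point worth making explicit is that $\beta'<1$ for large $\gamma'$ is itself a nontrivial input (it is where the inequality $\operatorname{Re}\tfrac{1}{\rho'-1}\leq 0$ comes from); a one-line citation to Levinson–Montgomery or to the fact that $\zeta'(s)\neq 0$ for $\sigma\geq 1$, $|t|$ large, would close that small gap.
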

We are now ready to prove Theorem 2. 
\begin{proof}[Proof of Theorem 2]
Suppose that there are at least $c \varepsilon^{A} \cdot
T \log T$ 
zeros $T \leq \gamma \leq 2T$ such that $(\gamma^{+} - \gamma)\log \gamma
\leq \varepsilon^{1/2}$. Call this set $S$. 
If $\gamma^{+} = \gamma$ for at least
a half of the elements in $S$ then $\rho' = \rho$ and hence $\beta'
= \tfrac 12$ for at least
$(c/2) \varepsilon^{A} \cdot T \log T$ zeros.

Hence suppose that $\gamma^+ > \gamma$ for at least half of the elements in
$S$ and call the subset of such elements $S_1$. 
By Corollary 3,
the number of $T \leq \gamma \leq 2T$ such that
the interval $[\gamma - 2\pi / \log T; \gamma + 2\pi / \log T]$
contains more than $\varepsilon^{-\delta}$ zeros is
$\leq (c/4) \varepsilon^{A} \cdot T \log T$, provided that
$\varepsilon$ is small enough with respect to $\delta$ and $A$. 
Therefore there is a subset $S_2$ of $S_1$ of cardinality
$\geq (c/4) \varepsilon^{A} \cdot T \log T$ with the properties that
$0 < (\gamma^+ - \gamma) \log \gamma < \varepsilon^{1/2}$
and the number of zeros in the interval
$[\gamma - 2\pi / \log T, \gamma + 2\pi / \log T]$
is less than $\varepsilon^{-\delta}$. 

By Lemma 10 each $\rho \in S_2$ gives rise to a zero $\rho'$
such that $|\rho' - \rho| \leq 2\sqrt{\varepsilon} / \log T$. By Lemma 11
the zero $\rho'$ satisfies $(\beta' - \tfrac 12)\log \gamma \leq \varepsilon$.
Furthermore the interval $|t - \gamma| < 2 \sqrt{\varepsilon} / \log T$ 
contains
at most $\varepsilon^{-\delta}$ zero. Therefore striking out at most
$\varepsilon^{-\delta}$ zeros from $S_2$ we obtain each time
a new and distinct zero $\rho'$ of $\zeta'(s)$. It follows that 
$\varepsilon^{\delta} |S_2|$ is a lower bound for the number of zeros
$\rho'$ with $(\beta' - \tfrac 12) \log \gamma \leq \varepsilon$.
Hence $m'(\varepsilon) \geq (c/4) \varepsilon^{A + \delta}$, as desired.
\end{proof}

\section{Proof of Corollary 1}
 
The Pair Correlation Conjecture asserts that the number of
zeros $T \leq \gamma_1, \gamma_2 \leq 2T$ for which
$2\pi \alpha / \log T < \gamma_1 - \gamma_2 \leq 2 \pi \beta / \log T$ 
is asymptotically
$$
N(T) \cdot \int_{\alpha}^{\beta} \bigg ( 1 - \bigg ( \frac{\sin(\pi u)}{\pi u}
\bigg )^{2} + \delta(u) \bigg ) d u
$$
with $\delta$ denoting Dirac's delta function. 
Here we derive a simple consequence of the Pair Correlation Conjecture
for small gaps between \textit{consecutive} zeros of the Riemann zeta-function. 
The lower bound
is not
optimal but sufficient for our needs.
\begin{lemma}
Assume the Pair Correlation Conjecture.
Let $\delta > 0$ be given. 
Then $\varepsilon^{3 + \delta} \ll m(\varepsilon) \ll \varepsilon^3$
provided that $0 < \varepsilon < C(\delta)$ with $C(\delta)$ a constant
depending only on $\delta$. 
\end{lemma}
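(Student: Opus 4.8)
The plan is to obtain both inequalities from the Pair Correlation Conjecture, using the short-interval estimates of the previous section---Corollary 3 and the $2k$-th moment bound $\sum_{T\le\gamma\le2T}\big(N(\gamma+\tfrac{2\pi}{\log T})-N(\gamma-\tfrac{2\pi}{\log T})\big)^{2k}\ll (Ck)^{2k}T\log T$ preceding it---to control clustering for the lower bound. Throughout write
$$
G(T)\;=\;\sum_{T\le\gamma\le2T} w\Big(\frac{(\gamma^{+}-\gamma)\log T}{\varepsilon}\Big)\;=\;\#\{T\le\gamma\le2T:\gamma^{+}-\gamma\le\varepsilon/\log T\},
$$
so that $m(\varepsilon)=\liminf_{T\to\infty}\tfrac{2\pi}{T\log T}G(T)$, and recall that the number of ordinates in $[T,2T]$ is $\sim \tfrac{T}{2\pi}\log T$.

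For the upper bound I would note that every $\gamma$ counted by $G(T)$ yields an ordered pair of zero-occurrences $(\gamma^{+},\gamma)$ with $0\le\gamma^{+}-\gamma\le\varepsilon/\log T$, distinct $\gamma$ giving distinct pairs. The occurrences with $\gamma^{+}=\gamma$ (repeated ordinates) number $o(T\log T)$, since the Pair Correlation Conjecture forces almost all zeros to be simple; for the remaining pairs $\gamma_1:=\gamma^{+},\ \gamma_2:=\gamma$ satisfy $0<\gamma_1-\gamma_2\le\varepsilon/\log T$, and by Pair Correlation the number of such pairs in $[T,2T]$ is asymptotically $\tfrac{T\log T}{2\pi}\int_{0}^{\varepsilon/2\pi}\big(1-(\sin\pi u/\pi u)^{2}\big)\,du\ll \varepsilon^{3}\cdot T\log T$, because the integrand is $\asymp u^{2}$ near $0$. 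Taking $\liminf$ over $T$ absorbs the $o(T\log T)$ term and the Pair Correlation error term, and yields $m(\varepsilon)\ll\varepsilon^{3}$.

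For the lower bound, for a zero-occurrence $\gamma$ put $r(\gamma):=\#\{\gamma'\ne\gamma:|\gamma-\gamma'|\le\varepsilon/\log T\}$. Applying the Pair Correlation asymptotic on the windows $(0,\varepsilon/\log T]$ and $[-\varepsilon/\log T,0)$ gives $\sum_{T\le\gamma\le2T}r(\gamma)\gg\varepsilon^{3}\cdot T\log T$ for all large $T$ (the integrand being $\gg u^{2}$ near $0$). Fix absolute constants: a positive integer $k$ and $A>0$ with $(A+1)(1-\tfrac1{2k})>3$ (e.g.\ $k=2$, $A=4$). Call $\gamma$ \emph{bad} if $N(\gamma+\tfrac{2\pi}{\log T})-N(\gamma-\tfrac{2\pi}{\log T})>\varepsilon^{-\delta}$; by Corollary 3 there are at most $\varepsilon^{A+1}T\log T$ bad $\gamma$ once $\varepsilon<C(\delta,A)$, and every non-bad (``good'') $\gamma$ has $r(\gamma)\le\varepsilon^{-\delta}$ since $\varepsilon<1<2\pi$. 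By Hölder's inequality and the moment bound above, the bad $\gamma$ contribute $\ll k\,\varepsilon^{(A+1)(1-1/2k)}\cdot T\log T$ to $\sum_{\gamma}r(\gamma)$, which is $o(\varepsilon^{3}T\log T)$ for $\varepsilon$ below an absolute threshold; hence $\sum_{\mathrm{good}\ \gamma}r(\gamma)\gg\varepsilon^{3}\cdot T\log T$, and dividing by the pointwise bound $r(\gamma)\le\varepsilon^{-\delta}$ gives $\#\{\mathrm{good}\ \gamma:r(\gamma)\ge1\}\gg\varepsilon^{3+\delta}\cdot T\log T$. To finish, observe that $r(\gamma)\ge1$ forces another occurrence within $\varepsilon/\log T$ of $\gamma$ on one side, so $\gamma$ or $\gamma^{-}$ has succeeding gap $\le\varepsilon/\log T$ and thus lies in the set counted by $G(T)$; since each $\beta\in G(T)$ is the image of at most the two occurrences $\beta$ and $\beta^{+}$, we get $G(T)\gg\varepsilon^{3+\delta}\cdot T\log T$, i.e.\ $m(\varepsilon)\gg\varepsilon^{3+\delta}$. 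The threshold $C(\delta)$ is the minimum of $C(\delta,A)$ from Corollary 3 (depending only on $\delta$, as $A$ is absolute) and finitely many absolute constants.

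The only real difficulty is the lower bound, and within it the passage from ``many pairs of zeros within $\varepsilon/\log T$''---immediate from Pair Correlation---to ``many \emph{consecutive} pairs at distance $\le\varepsilon/\log T$'': a priori all the close pairs could be concentrated inside a handful of heavy clusters. Corollary 3 together with its $2k$-th moment estimate is precisely what excludes this, at the price of the harmless factor $\varepsilon^{-\delta}$ bounding the size of a typical cluster, and that factor is what turns the expected $\varepsilon^{3}$ into the $\varepsilon^{3+\delta}$ of the statement.
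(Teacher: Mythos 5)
Your proof is correct and follows essentially the same strategy as the paper: use the Pair Correlation Conjecture directly for the upper bound, and for the lower bound observe that PCC produces $\gg\varepsilon^{3}T\log T$ close pairs, then use Corollary 3 together with the $2k$-th moment bound of Lemma 9 (via Hölder in your case, via Cauchy--Schwarz with a larger choice of $A$ in the paper's) to show the pairs cannot all hide in a sparse collection of heavy clusters, paying only a factor $\varepsilon^{-\delta}$ for the size of a typical cluster. The only cosmetic differences are that you symmetrize the pair count via $r(\gamma)$ rather than restricting to one-sided gaps in $[\varepsilon/2,\varepsilon]$, and you apply Hölder at exponent $2k$ with $k=2$, $A=4$ rather than Cauchy--Schwarz with $A=100$; neither changes the substance.
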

\begin{proof}
The Pair Correlation Conjecture asserts that the number of distinct zeros
$T \leq \gamma_1, \gamma_2 \leq 2T$ for which
$ 0 \leq \gamma_1 - \gamma_2 \leq 2\pi \alpha / \log T$ is
asymptotically $N(T) \cdot f(\alpha)$ with $f(\alpha)$
such that $f(\alpha) \sim c \cdot \alpha^3$ as $\alpha \rightarrow 0$. 
The number of $T \leq \gamma \leq 2T$ such that 
$(\gamma^+ - \gamma) \log \gamma \leq \varepsilon$ is less than the number of
distinct $T \leq \gamma_1, \gamma_2 \leq 2T$ for which $0 \leq
\gamma _1 - \gamma_2 \leq 2\pi \varepsilon / \log T$ therefore
$m(\varepsilon) \leq f(\varepsilon) \ll \varepsilon^3$.

Now consider the set of $T \leq \gamma_1, \gamma_2 \leq 2T$
for which $\tfrac {\varepsilon}{2} \leq (\gamma_1 - \gamma_2)\log \gamma_1
\leq \varepsilon$. Call $S$ the set of $T \leq \gamma_1 \leq 2T$
for which the interval $[\gamma_1 - 2\pi / \log T; \gamma_1 + 2\pi / \log T]$
contains at most $\varepsilon^{-\delta}$ zeros. By Corollary 3, 
the zero $T \leq \gamma_1 \leq 2T$ with $\gamma_1 \notin S$
have cardinality $\leq \varepsilon^{A} \cdot T \log T$ provided
that $0 < \varepsilon < C(\delta, A)$ (we choose $A = 100$ for example). 
We have
\begin{equation} \label{PCC}
\sum_{\substack{T \leq \gamma_1,\gamma_2 \leq 2T \\
\tfrac {\varepsilon}{2} \leq (\gamma_1 - \gamma_2) \log \gamma_1 \leq \varepsilon} }
1
= 
\sum_{\gamma_1 \in S} \sum_{\substack{T \leq \gamma_2 \leq 2T \\
\tfrac {\varepsilon}{2} \leq (\gamma_1 - \gamma_2) \log \gamma_1 \leq \varepsilon}}
1 
+ \sum_{\gamma_1 \not{\in} S} \sum_{\substack{T \leq \gamma_2 \leq 2T \\
\tfrac {\varepsilon}{2} \leq (\gamma_1 - \gamma_2) \log \gamma_1 \leq \varepsilon}}
1
\end{equation} 
Since $\gamma_1 \in S$ there can be at most $\varepsilon^{-\delta}$
zeros $\gamma_2$ satisfying $\varepsilon / 2 \leq |\gamma_1 - \gamma_2| 
\log \gamma_1 \leq \varepsilon$. Therefore the first sum is 
bounded by 
$$
\sum_{\substack{\gamma_1 \in S \\ (\gamma_1^+ - \gamma_1) \log \gamma_1 \leq \varepsilon}}
\varepsilon^{-\delta} \ll \varepsilon^{-\delta} \cdot m(\varepsilon) \cdot T \log T 
$$
because for each $\gamma_1 \in S$ the inner sum over $\gamma_2$
is $\leq \varepsilon^{-\delta}$ if $(\gamma_1^+ - \gamma_1) \log \gamma_1
\leq \varepsilon$ and is $0$ otherwise.
On the other hand the second sum is by Cauchy-Schwarz less than,
\begin{multline*}
|S|^{1/2} \cdot \bigg ( \sum_{T \leq \gamma_1 \leq 2T} \bigg ( \sum_{\substack{
T \leq \gamma_2 \leq 2T \\ \tfrac { \varepsilon}{2} \leq (\gamma_1 - \gamma_2)\log \gamma_1
\leq \varepsilon}} 1 \bigg )^{2} \bigg )^{1/2} \leq \\ \leq
|S|^{1/2} \cdot \bigg ( \sum_{T \leq \gamma_1 \leq 2T} \bigg ( 
N \big ( \gamma_1 + \frac{2\pi}{\log T} \big ) - N \big (
\gamma_1 - \frac{2\pi}{\log T} \big ) \bigg )^{2} \bigg )^{1/2} 
\ll \varepsilon^{A/2} \cdot T \log T 
\end{multline*}
by Lemma 9. By the Pair Correlation Conjecture the left-hand side of 
(\ref{PCC}) is asymptotically $C \cdot N(T) \cdot \varepsilon^3$ for some
absolute constant $C > 0$. Combining the above three equations we get
$C \varepsilon^3 \leq m(\varepsilon) \varepsilon^{-\delta} + C_1 \varepsilon^{A/2}$
for some absolute constant $C, C_1 > 0$. Therefore if $\varepsilon$ is
small enough then $\varepsilon^{3 + \delta} \ll m(\varepsilon)$.
\end{proof}

We are now ready to prove Corollary 1. 

\begin{proof}[Proof of Corollary 1]
By the previous lemma, on the Pair Correlation, 
we have $m(\varepsilon^{1/2}) \gg \varepsilon^{3/2 + \delta}$
for all $C(\delta) > \varepsilon > 0$. 
Therefore by the second part of our Main Theorem we get
$m'(\varepsilon) \gg \varepsilon^{3/2 + \delta}$
for all $C(\delta) > \varepsilon > 0$. 
Now suppose to the contrary that there is a $\eta > 0$ and a 
sequence of $\varepsilon \rightarrow 0$ such that
$m'(\varepsilon) \gg \varepsilon^{3/2 - \eta}$. 
Then, by Theorem 1 on the same sub-sequence of $\varepsilon \rightarrow 0$
we have $m(\varepsilon^{1/2 - \delta}) \gg \varepsilon^{3/2 - \eta}$. However
by the Pair Correlation Conjecture we have $\varepsilon^{3/2 - 3\delta}
\gg m(\varepsilon^{1/2 - \delta}) \gg \varepsilon^{3/2 - \eta}$. 
Choosing $0 < \delta < (1/3)\eta$ and letting $\varepsilon
\rightarrow 0$ along the subsequence, we obtain a contradiction.
\end{proof}

\section{Acknowledgments}

I would like to thank Prof. Farmer, Prof. Ki and 
Prof. Soundararajan 
for comments on an early draft of this paper.

\bibliographystyle{plain}
\bibliography{farmer4}

\end{document}